\definecolor{mauve}{rgb}{0.58,0,0.82}
\theoremstyle{plain}
\newtheorem{lem}{Lemma}
\newtheorem{theorem}{Theorem}
\theoremstyle{definition}
\theoremstyle{remark}
\newtheorem{conjecture}{Conjecture}
\newcommand{\tL}{\mathbf 1} 
\newcommand{\tO}{\mathbf 0}
\begin{document}

\title[On the binary digits of $n$ and $n^2$]{On the binary digits of $n$ and $n^2$}

\author{Karam Aloui}
\address{1. Université de Tunis El Manar, Institut Supérieur des Technologies Médicales de Tunis, 9 Rue Zouhair Essafi, 1006, Tunis, Tunisia;
2. Université de Sfax, Laboratoire d’Algèbre, Géométrie et Théorie Spectrale, Route de la Soukra, km 3.5, 3000, Sfax, Tunisia}
\email{alouikaram@yahoo.fr}

\author{Damien Jamet}
\address{LORIA, Campus Scientifique BP 239, F-54506 Vand\oe uvre-l\`es-Nancy, France;}
\email{damien.jamet@loria.fr}

\author{Hajime Kaneko}
\address{Institute of Mathematics, University of Tsukuba, 1-1-1, Tennodai, Tsukuba, Ibaraki, 305-8571, JAPAN; 
Research Core for Mathematical Sciences, University of Tsukuba, 1-1-1, Tennodai, Tsukuba, Ibaraki, 305-8571, JAPAN}
\email{kanekoha@math.tsukuba.ac.jp}

\author{Steffen Kopecki}
\email{steffen.kopecki@gmail.com}

\author{Pierre Popoli}
\address{1. Universit\'e de Lorraine, Institut Elie Cartan de Lorraine, UMR 7502, Vandoeuvre-l\`es-Nancy, F-54506, France;
2. CNRS, Institut Elie Cartan de Lorraine, UMR 7502, Vandoeuvre-l\`es-Nancy, F-54506, France}
\email{pierre.popoli@univ-lorraine.fr}

\author{Thomas Stoll}
\address{1. Universit\'e de Lorraine, Institut Elie Cartan de Lorraine, UMR 7502, Vandoeuvre-l\`es-Nancy, F-54506, France;
2. CNRS, Institut Elie Cartan de Lorraine, UMR 7502, Vandoeuvre-l\`es-Nancy, F-54506, France}
\email{thomas.stoll@univ-lorraine.fr}

\begin{abstract}
Let $s(n)$ denote the sum of digits in the binary expansion of the integer $n$. Hare, Laishram and Stoll (2011) studied the number of odd integers such that $s(n)=s(n^2)=k$, for a given integer $k\geq 1$. The remaining cases that could not be treated by theses authors were $k\in\{9,10,11,14,15\}$. In this paper we show that there is only a finite number of solutions for $k\in\{9,10,11\}$ and comment on the difficulties to settle the two remaining cases $k\in\{14,15\}$. A related problem is to study the solutions of $s(n^2)=4$ for odd integers. Bennett, Bugeaud and Mignotte (2012) proved that there are only finitely many solutions and conjectured that $n=13,15,47,111$ are the only solutions. In this paper, we give an algorithm to find all solutions with fixed sum of digits value, supporting this conjecture, as well as show  related results for $s(n^2)=5$.
\end{abstract}
\keywords{Digital expansions; numeration system; sum of digits function; sequences and sets.}

\maketitle


\section{Introduction}\label{secintro}

Let $s(n)$ be the sum of digits in the binary expansion (i.e. the Hamming weight) of $n \in \mathbb{N}$. In the present paper we investigate the question of whether or not the equation 
\begin{equation}\label{problem}
s(n)=s(n^2)=k
\end{equation} 
has infinitely many odd solutions in $n$ for a given $k \in \mathbb{N}$.\footnote{Note that for all integers $n$ we have $s(2n)=s(n)$. This means that the restriction to $n$ odd is necessary to make this question meaningful.} Hare, Laishram and Stoll~\cite{harelaishramstoll2011} settled all cases with the exception of $k\in\{9,10,11,14,15\}$. Our  contribution here is to show, via a combinatorial and algorithmic approach, that the equation only has finitely many solutions for $k\in\{9, 10, 11\}$. We will address the computational issues that we encounter for the last remaining open cases, namely, $k=14, 15$. 

The main motivation to consider~(\ref{problem}) comes from work of Madritsch and Stoll~\cite{MaSt14} who showed that $(s(n^2)/s(n))_{n\geq 1}$ is dense in $\mathbb{R}^+$.
This elaborates on an old result of Stolarsky~\cite{St78} (see also~\cite{HLS11-2, Lind97, Melfi05, Mei15, Saund15}) who showed that
$\liminf_{n\to \infty} {s(n^2)}/{s(n)}=0.$ Since the average size of $s(n^2)$ is twice as large as that of $s(n)$ (see~\cite{BK95, Pe02}) the equation~(\ref{problem}) concerns an exceptional set of integers. In particular, it is intriguing that for certain values of $k$ the equation allows for infinitely many odd solutions $n$ and for other values of $k$ there is just a finite number. One of the results of Hare, Laishram and Stoll~\cite{harelaishramstoll2011} states that there are infinite parametric families of solutions for $k=12, 13$ and $k\geq 16$. They showed that 
\begin{align}
&s(n)=s(n^2)=12,\mbox{ for all } n=111\cdot 2^t+111, \mbox{ with }t\geq 15, \label{s(n)=s(n2)=12} \\
&s(n)=s(n^2)=13,\mbox{ for all } n=23\cdot 2^t+1471, \mbox{ with }t\geq 21, \label{s(n)=s(n2)=13} \\
& s(n)=s(n^2)=16,\mbox{ for all }n=111\cdot 2^{t}+1919, \mbox{ with } t\geq 21. \label{s(n)=s(n2)=16}
\end{align}

On the other side of the spectrum, there are only finitely many solutions for $k\leq 8$. For example, for $$s(n)=s(n^2)=8,$$ there are only 64 solutions in odd integers and the largest solution is $n=266335$ (see~\cite[Table~2]{harelaishramstoll2011}). These results are based on an algorithm that handles all the possible orderings of the exponents in $n^2$ when $n$ is written as a sum of a small number of powers of 2. Since the algorithm treats (in an exhaustive way) all cases, the method of~\cite{harelaishramstoll2011} allowed to explicitly determine all the solutions for $k\leq 8$. The running time of the algorithm, however, explodes for larger values of $k$. Some heuristic arguments are given in~\cite[Section~5]{harelaishramstoll2011} to support the conjecture that there are only finitely many solutions for $k\in\{9,10\}$. The main purpose of this paper is to combine a new combinatorial factorization lemma and two algorithms with recent results by Kaneko and Stoll~\cite{kanekostoll2021} to reduce the investigation to a finite case analysis. We then carry out this case analysis for $k\in\{9,10,11\}$ in a unified manner.

\section{Main results}

In the present article we show the following theorem: 

\begin{theorem}\label{main_theorem}
Let $k\in\{9,10,11\}$. Then the number of odd integers $n$ with $$s(n)=s(n^2)=k$$ is finite.
\end{theorem}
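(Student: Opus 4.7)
The plan is to write each candidate odd $n$ in the form $n = \sum_{i=1}^{k} 2^{a_i}$ with $0 = a_1 < a_2 < \cdots < a_k$. Squaring yields
$$n^2 = \sum_{i=1}^{k} 2^{2a_i} + \sum_{1\le i<j\le k} 2^{a_i+a_j+1},$$
a formal sum of $\binom{k+1}{2}$ powers of $2$ counted with multiplicity. Since the condition $s(n^2)=k$ forces a large number of carry cancellations, following \cite{harelaishramstoll2011} each collapsing pattern corresponds to an ordering of the multiset $\{2a_i\}\cup\{a_i+a_j+1\}$ together with a family of arithmetic identities among the $a_i$. The naive enumeration of these orderings is precisely what becomes infeasible for $k\ge 9$, so the strategy must be to eliminate most orderings a priori.

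The first step is to prove the combinatorial factorization lemma announced in the introduction. It should assert that whenever the support $(a_1,\dots,a_k)$ contains a sufficiently long inner gap $a_{i+1}-a_i$, the integer $n$ factors as $n = A\cdot 2^T + B$ in such a way that the binary digits of $A^2$, of $2AB\cdot 2^T$ and of $B^2$ occupy essentially disjoint regions. This decoupling is exactly what produces the infinite families \eqref{s(n)=s(n2)=12}--\eqref{s(n)=s(n2)=16}, so the lemma will classify the block shapes of $n$ capable of generating an infinite family, and conversely isolate a "generic" regime in which no such decoupling occurs.

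Next I would invoke Kaneko--Stoll~\cite{kanekostoll2021} in the generic regime to obtain a quantitative lower bound on $s(n^2)$ as a function of the diameter $a_k$ of the support; combined with the factorization lemma this produces an explicit bound $a_k \le B_k$ for all odd $n$ not covered by one of the finitely many distinguished block patterns. The problem is thus reduced to two finite tasks: a bounded search over "short" supports, and a finite list of parametric families to dispose of individually.

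The final step is algorithmic and splits into two procedures. The first algorithm enumerates all odd $n$ with $a_k\le B_k$, its efficiency resting on aggressive pruning: the orderings of the cross-exponents $a_i+a_j+1$ are fixed incrementally, and each partial fixing already imposes linear constraints on the $a_i$'s that eliminate entire subtrees. The second algorithm takes each distinguished block pattern from the factorization lemma, substitutes the resulting parametric $n = n(t)$ into $s(n(t)^2)$, and exploits the eventual periodicity of the binary expansion of $n(t)^2$ in $t$ to verify that only finitely many $t$ solve the target equation. The main obstacle I expect is the bound $B_k$: it grows quickly with $k$, and even with careful pruning the case tree for $k=11$ sits at the edge of tractability, which is precisely the obstruction that will prevent extending the method to the remaining cases $k\in\{14,15\}$.
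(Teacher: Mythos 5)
Your overall architecture --- a factorization lemma that isolates long gaps in the binary expansion, followed by a finite case analysis assisted by two algorithms --- matches the paper's proof in outline, but the reduction you propose after the factorization step has a genuine gap. In the paper, the factorization lemma states that \emph{every} sufficiently large odd $n$ with $s(n)=k$ admits a factorization $(n)_2=x_m\tO^{\ell_m}\cdots x_1\tO^{\ell_1}x_0$ in which \emph{all} separating gaps are long compared to all blocks (short gaps are simply absorbed into the $x_i$). Consequently there is no ``generic regime in which no decoupling occurs'' for large $n$: integers without a long gap are automatically bounded and contribute only finitely many solutions, so that part of your plan is vacuous rather than the place where Kaneko--Stoll is needed. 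Relatedly, you misattribute the role of Kaneko--Stoll: their results give an \emph{upper} bound on a product $ab$ of odd integers when $s(ab)\in\{2,3\}$, namely $ab<2^{2\ell m-4}$ and $ab<2^{4\ell m-13}$; they do not provide a lower bound on $s(n^2)$ as a function of the diameter of the support.

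The more serious problem is your treatment of the decoupled case. After factorization, the unknowns are the $m$ gap lengths \emph{and} the blocks $x_0,\dots,x_m$ themselves, which are a priori unbounded odd integers, so the decoupled solutions do not form ``a finite list of parametric families $n(t)$'' in a single parameter whose squares are eventually periodic. The entire difficulty is to show that the constraints $s(n)=s(n^2)=k$ force each block and each cross product $y_{i,j}=x_ix_j$ into an explicitly bounded or explicitly classified set: this uses the fact that the $2m+1$ summands $y_{m,m},\dots,y_{m,0},\dots,y_{0,0}$ are pairwise non-interfering (which already forces $m<k/2$), the Kaneko--Stoll product bounds for $s(x_1x_0)\in\{2,3\}$, Szalay's classification of $E_3$, the computed sets $E_{4,\lambda}$ and $E_{5,\lambda}$, and a case-by-case interference-graph analysis for each $m\le 5$. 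Your proposal black-boxes all of this as ``dispose of individually,'' so as written it is not a proof. Finally, your bounded-support search by incrementally fixing orderings of the exponents $a_i+a_j+1$ is essentially the original Hare--Laishram--Stoll method, which the paper states becomes infeasible precisely at $k=9$; the paper instead enumerates integers of fixed weight directly and prunes via the low-order bits of $n^2$.
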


We have made a global search for $s(n^2)=s(n)=k$, $11\leq k \leq 15$, up to $n<2^{80}$ (see \Cref{sec-rem1415}). No infinite family occurs clearly in the cases $k=14$ and $k=15$ compared to $k\in \{12,13,16\}$, see~\eqref{s(n)=s(n2)=12}--\eqref{s(n)=s(n2)=16}. We therefore formulate the following conjecture:

\begin{conjecture} \label{conj 14-15}
Let $k \in \{14,15\}$. Then the number of odd integers $n$ with $s(n)=s(n^2)=k$ is finite. 
\end{conjecture}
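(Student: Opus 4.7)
The plan is to follow the same three-step framework used to establish \Cref{main_theorem} and push it through for $k\in\{14,15\}$. Step one: apply the Kaneko--Stoll~\cite{kanekostoll2021} bounds to show that any potential infinite family of odd solutions to $s(n)=s(n^2)=k$ must come from a parametric form, say $n = A\cdot 2^t + B$, where $A,B$ belong to an explicit finite set depending on $k$ and where $t$ varies in an arithmetic-progression-like range. Step two: use the combinatorial factorization lemma of this paper to expand $n^2 = A^2\cdot 2^{2t} + 2AB\cdot 2^t + B^2$ and control how the three blocks interact via carries as $t$ varies. Step three: for each admissible template $(A,B)$, either exhibit a parametric family (which for $k=14,15$ we expect does not occur, based on the search up to $2^{80}$) or derive an arithmetic contradiction that forces $n$ below an explicit threshold, thereby finitizing the remaining analysis.

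Concretely, I would first tabulate, via the algorithms of \Cref{secintro}, all compatible digit-shape pairs for $n$ and $n^2$ with $s(n)=s(n^2)=k$ and with $n$ splittable as two blocks separated by a large zero-run (this is exactly where the factorization lemma applies). For each such pair I would verify the carry pattern between the blocks $A^2$, $2AB$, $B^2$ and check whether the total digit sum can stay equal to $k$ for infinitely many $t$. If not, one records a bound on the zero-run length, which combined with the Kaneko--Stoll estimate limits $n$. The \emph{finite check} then reduces to verifying sporadic solutions below a concrete (but very large) bound, together with ruling out every surviving template.

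The hard part, and the reason the conjecture is stated rather than proved, is the combinatorial explosion of templates when $k$ grows from $11$ to $14$ or $15$. Empirically in \Cref{sec-rem1415}, the number of candidate $(A,B)$ configurations that survive the first pruning appears to grow super-exponentially in $k$, so even with aggressive carry-based pruning and exploitation of symmetries (e.g.\ swapping $A\leftrightarrow B$, reversing binary expansions, factoring out common powers of $2$), the case analysis is not currently within reach. A secondary difficulty is that infinite families of the type~\eqref{s(n)=s(n2)=12}--\eqref{s(n)=s(n2)=16} arise from rather delicate carry cancellations, so even if a template survives pruning one must inspect it arithmetically to be sure no such family hides there; ruling out all such hidden families for $k=14,15$ seems to require either substantially more computational power (extending the global search well beyond $2^{80}$) or a sharper structural lemma that replaces template-by-template case analysis with a uniform argument, for instance by bounding the number of admissible carry patterns independently of $k$.

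Finally, to gain confidence before attempting the full proof, I would extend the exhaustive search of \Cref{sec-rem1415} to, say, $n<2^{120}$ using the algorithms referenced there, and also enumerate all parametric forms $n = A\cdot 2^t + B$ with $A,B < 2^{40}$ and $t$ large; if no infinite family appears in either experiment, the heuristic support for \Cref{conj 14-15} becomes strong enough to justify the deeper computational effort needed to convert the plan above into a rigorous proof.
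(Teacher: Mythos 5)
The statement you were given is Conjecture~\ref{conj 14-15}: the paper does not prove it, and neither do you --- your text is a research plan together with an honest admission that it cannot currently be completed. That admission is accurate, and the framework you describe (the factorization lemma, the Kaneko--Stoll bounds, algorithmic enumeration of the sets $\Delta_{\ell_1,\ell_2,m}$, and a global search up to $2^{80}$) is essentially what Section~\ref{sec-rem1415} of the paper already carries out as \emph{evidence} for the conjecture, including the tables of $\Delta$-sets for the two-block case and the observation that no infinite family emerges below $2^{80}$. So, viewed as a comparison of approaches, there is little daylight between your plan and the paper's discussion.

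There are, however, two concrete inaccuracies that would matter if one tried to upgrade your sketch to a proof. First, Lemma~\ref{factorization} for $k=14$ (resp.\ $k=15$) allows factorizations with up to $m=6$ (resp.\ $m=7$) separating zero-blocks, not only the two-block form $n=A\cdot 2^{t}+B$; for $k\in\{9,10,11\}$ the multi-block cases $m\ge 2$, handled through the interference graphs, constitute most of the work in the proof of \Cref{main_theorem}, and your plan omits them entirely. Second, the Kaneko--Stoll bounds (Lemmas~\ref{s(ab)=2} and~\ref{s(ab)=3}) only bound the product $x_1x_0$ when $s(x_1x_0)\in\{2,3\}$; the paper explicitly notes that the analogue for $s(ab)=4$ is false, so these lemmas cannot, as asserted in your ``Step one'', confine $A$ and $B$ to an explicit finite set in general. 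For $s(x_1x_0)\ge 4$ one must instead argue through the sets $E_{k,\lambda}$, and for $k=14,15$ this requires knowledge of $E_{j,\lambda}$ for values of $j$ well beyond what Theorem~\ref{thm_s(n2)=4 and 5} provides --- this, together with the growth of the interference case analysis, is precisely where the genuinely open difficulty lies. Your proposal is therefore best read as a restatement of the paper's own heuristic case for the conjecture, not as a proof or a complete proof strategy.
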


For Theorem~\ref{main_theorem} it is crucial to have efficient algorithms at our disposal that calculate certain sets that appear in intermediate steps in the proof.

\medskip

For  fixed $\ell_1 \geq 1$, $\ell_2 \geq 1$ and $m\geq 1$, set
\begin{equation}\label{deltadef}
\Delta_{\ell_1,\ell_2,m}:=\{n\in \mathbb{N}: \quad s(n)=\ell_1, \quad s(n^2)\leq \ell_2, \quad n<2^m, \quad n\text{ odd} \}. 
\end{equation}

Our first algorithm, called \texttt{next}, has the purpose to calculate $\Delta_{\ell_1,\ell_2,m}$ for small values of $\ell_1,\ell_2,m$. 
\medskip

For fixed $k\in\mathbb{N}$ and $\lambda\geq 1$, let
\begin{equation}\label{defElambdak}
E_{k,\lambda}:=\{n\in \mathbb{N}: \quad s(n^2)=k, \quad s(n)=\lambda, \quad n\mbox{ odd} \},
\end{equation}
and set
\begin{equation}\label{defEk}
  E_k := \bigcup_{\lambda\geq 1} E_{k,\lambda}.
\end{equation}

The aim of the second algorithm, called \texttt{max-integer}, is to calculate efficiently $E_{k, \lambda}$ for small $k$ and $\lambda$. Several results about the sets $E_k$ are already known in the literature. To begin with, it is an elementary calculation to show that $E_2=\{3\}$. Szalay~\cite{szalay2002} showed that 
\begin{equation}\label{s(n2)=3}
E_3 =\{2^{t}+1: \; t\geq 2\} \cup \{7, 23\},
\end{equation}
see also~\cite{Lu04} for a generalization. Szalay's proof relies on a result of Beukers on the Ramanujan--Nagell equation.
Typically such sets are composed of a union of a set of infinite parametrized integers and a set of small sporadic solutions. Bennett/Bugeaud/Mignotte~\cite{bennettbugeaudmignotte2012}, Bennett/Bugeaud~\cite{BeBu14}, Hajdu/Pink~\cite{HaPi14} and B\'erczes/Hajdu/Miyazaki/Pink~\cite{BHMP16} generalized Szalay's results to other bases and more general powers, see also Bennett~\cite{Be17}. Finally, we mention also the recent work of Szalay~\cite{Sz20} who considered algorithms to find the solution set of the Diophantine equation $2^n+\alpha\cdot 2^m+\alpha^2=x^2$, where $\alpha$ is a fixed positive integer.

\medskip

As for $E_4$, it is known that it is a finite set (so, no infinite families occur), see Bennett, Bugeaud and Mignotte~\cite{bennettbugeaudmignotte2012}, and Corvaja and Zannier~\cite{corvaja}. Bennett, Bugeaud and Mignotte conjectured:
\begin{conjecture}\label{conjectureE4}
\begin{equation}\label{conjE4}
  E_4=\{13,\; 15,\; 47,\; 111\}.  
\end{equation}
\end{conjecture}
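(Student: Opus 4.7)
The plan is to reduce Conjecture~\ref{conjectureE4} to a finite computational problem that can then be dispatched by \texttt{max-integer}. Writing $n^2 = 1 + 2^{a} + 2^{b} + 2^{c}$ with $0 < a < b < c$ (the lowest bit is $1$ because $n$ is odd), one has $n^2 - 1 = 2^{a}(1 + 2^{b-a} + 2^{c-a})$. Since $n$ is odd, $\gcd(n-1, n+1) = 2$, so exactly one of $n \pm 1$ absorbs the factor $2^{a-1}$ while the other contributes a single factor of $2$. Setting $n - 1 = 2u$ and $n + 1 = 2^{a-1}v$ (or the symmetric assignment) with $u, v$ odd, one obtains the Diophantine system
\begin{equation*}
  uv = 1 + 2^{b-a} + 2^{c-a}, \qquad 2^{a-2} v - u = \pm 1,
\end{equation*}
which is precisely the starting point of the analysis in~\cite{bennettbugeaudmignotte2012}.

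Next, I would appeal to the effective finiteness proved by Bennett, Bugeaud and Mignotte~\cite{bennettbugeaudmignotte2012} (or, alternatively, by Corvaja and Zannier~\cite{corvaja}) to extract an explicit upper bound $B$ on $c$. This yields an upper bound $\Lambda < B/2 + 2$ on $\lambda = s(n)$, since $\lambda \leq \lceil \log_{2} n \rceil + 1$. Then, for each $\lambda \in \{2, 3, \ldots, \Lambda\}$, I would run \texttt{max-integer} to compute $E_{4, \lambda}$ in full and verify that the union over $\lambda$ reproduces $\{13, 15, 47, 111\}$. Noting that $s(13) = 3$, $s(15) = 4$, $s(47) = 5$ and $s(111) = 6$, only the four values $\lambda \in \{3, 4, 5, 6\}$ contribute known solutions, and all other $\lambda \leq \Lambda$ must yield $E_{4, \lambda} = \emptyset$.

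The main obstacle will be the astronomical size of the bound $B$ coming from linear-forms-in-logarithms estimates, which typically makes a direct enumeration over all $\lambda \leq \Lambda$ infeasible. A workable proof will therefore have to combine \texttt{max-integer} with a sharp reduction of $B$, for instance through an LLL step applied to the logarithm lattice attached to the equation $uv = 1 + 2^{b-a} + 2^{c-a}$, or with a structural argument that rules out large $\lambda$ directly by exploiting the two-fold factorization of $n^2-1$ above (for example, by bounding $a$ in terms of $b-a$ using $2$-adic valuations, then iterating). Absent such a reduction, the algorithm can only provide computational support for the conjecture, which is essentially the stance taken in this paper.
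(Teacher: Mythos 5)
You were asked about a statement that the paper itself does not prove: it is a conjecture, attributed to Bennett, Bugeaud and Mignotte, and the paper explicitly records that it ``remains still open''. The only thing the paper establishes is the supporting evidence of \Cref{thm_s(n2)=4 and 5}, equation \eqref{union4lambda}, namely that $\bigcup_{\lambda\le 17}E_{4,\lambda}=\{13,15,47,111\}$, obtained by running \texttt{max-integer} for each fixed weight $\lambda\le 17$. Your proposal, by your own admission in its final paragraph, does not close the conjecture either, so there is no proof to certify; but it is worth pinpointing where the plan genuinely breaks rather than merely stalls. The decisive flaw is the appeal to ``the effective finiteness proved by Bennett, Bugeaud and Mignotte (or, alternatively, by Corvaja and Zannier)''. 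No such effective bound is available: the finiteness of $E_4$ rests on the Schmidt subspace theorem (this is the Corvaja--Zannier route, and the four-digit case of~\cite{bennettbugeaudmignotte2012} relies on arguments of the same ineffective nature), and the equation $uv=1+2^{b-a}+2^{c-a}$ with the side condition $2^{a-2}v-u=\pm1$ is not a two-term exponential equation to which linear forms in logarithms apply directly --- the unknown factorization $uv$ of a three-term sum is precisely what forces the subspace theorem and destroys effectivity. Consequently there is no explicit $B$, astronomical or otherwise, from which to derive your $\Lambda$, and the LLL reduction you invoke has no starting point. You half-recognize this at the end, but you misdiagnose the obstacle as the \emph{size} of the bound rather than its \emph{nonexistence}; this is exactly why the paper can only say that a computational proof would follow ``if we could establish a universal bound for $\lambda$''.

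The salvageable parts of your proposal are consistent with what the paper actually does. The factorization $n^2-1=(n-1)(n+1)=2^a\bigl(1+2^{b-a}+2^{c-a}\bigr)$ with $\gcd(n-1,n+1)=2$ is correct (note $a\ge 3$ automatically, since $n^2\equiv 1\pmod 8$), and it is the classical entry point used by Szalay for $E_3$ and by Bennett--Bugeaud--Mignotte for $E_4$; the paper instead works with $n=1+2^{\ell}m$ and the condition $s(m+2^{\ell-1}m^2)=3$, which feeds more naturally into the bit-by-bit candidate extension of \texttt{max-integer}. Running \texttt{max-integer} for each admissible $\lambda$ is exactly the paper's strategy; without an effective height bound it yields, for any fixed range of $\lambda$, only the same kind of partial, computational evidence as \eqref{union4lambda}, not a proof of \Cref{conjectureE4}.
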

This conjecture remains still open. 

\medskip

We here consider a refined version of Conjecture~\ref{conjectureE4}, namely, we restrict our attention to those integers $n$ that have a fixed sum of binary digits. This is particularly valuable in the study of~\eqref{problem}, where we need to know explicitly $E_{4,\lambda}$ and $E_{5,\lambda}$ for small values of $\lambda$. Indeed, the infinite family given in~\eqref{s(n)=s(n2)=13} is built from the two integers $23$ and $1471$. Since $s(23^2)=3$, $s(1471^2)=5$ and $s(23\cdot1471)=5$, we have the correct amount of bits in the square of $n=23 \cdot 2^{t} +1471$ for sufficiently large $t$. 

\medskip
We apply \texttt{max-integer} to show the following result.

\begin{theorem}\label{thm_s(n2)=4 and 5}
We have
\begin{equation}\label{union4lambda}
\bigcup_{\lambda\leq 17 }E_{4,\lambda}=\{13,\;15,\;47,\;111\}    
\end{equation}
and
\begin{equation}\label{union5lambda}
\bigcup_{4\leq \lambda\leq 15 }E_{5,\lambda}=\{29,\;31,\;51,\;79,\;91,\;95,\;157,\;223,\;279,\;479,\;727,\;1471,\;5793\}.
\end{equation}
Moreover,
\begin{equation}\label{Edecomp}
  E_5=\{1+2+2^\ell:\; \ell \geq 3\} \; \cup  \;
\{1+2^{\ell}+2^{\ell+1}:\; \ell \geq 3\} \; \cup  \;
\{1+2^{\ell}+2^{2\ell-1}:\; \ell \geq 3\}\; \cup \; E',  
\end{equation}
where $E'$ is a finite set.
\end{theorem}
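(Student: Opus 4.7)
I would separate the statement into its two explicit enumerations \eqref{union4lambda} and \eqref{union5lambda} and the structural decomposition \eqref{Edecomp}. For the enumerations, the plan is to run \texttt{max-integer} on each admissible pair $(k,\lambda)$ in the target ranges. The trivial cases $\lambda \in \{1,2\}$ are handled directly: $s(n)=1$ with $n$ odd forces $n=1$ and $s(n^2)=1$, while $s(n)=2$ gives $n=1+2^a$, so $n^2=1+2^{a+1}+2^{2a}$ has $s(n^2)\leq 3$. For the remaining values of $\lambda$ the algorithm terminates with a finite $E_{k,\lambda}$; collating the outputs produces the four-element set in \eqref{union4lambda} and the thirteen-element set in \eqref{union5lambda}.

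The decomposition \eqref{Edecomp} is the nontrivial part and requires isolating the contribution of $\lambda=s(n)=3$, which yields the three infinite families, from the contribution $\lambda\geq 4$, which must be shown to be finite. For $\lambda=3$, I would write $n=1+2^a+2^b$ with $1\leq a<b$. Expanding gives
\begin{equation*}
n^2 = 1 + 2^{a+1}+2^{2a}+2^{b+1}+2^{a+b+1}+2^{2b},
\end{equation*}
so $s(n^2)\leq 6$. To reach $s(n^2)=5$, exactly one pair among the six exponents $\{0,a+1,2a,b+1,a+b+1,2b\}$ must coincide and the carry produced at the next power must not land on another of these exponents. Enumerating the equalities under $1\leq a<b$ rules out all but three collisions: $a+1=2a$, $2a=b+1$, and $a+b+1=2b$, corresponding to $a=1$, $b=2a-1$, and $b=a+1$, respectively. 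A short check of the cascade shows that the carry is harmless precisely when $b\geq 3$ in the first case and when $a\geq 3$ in the other two, while the only configurations with two simultaneous collisions, namely $(a,b)\in\{(1,2),(2,3)\}$ giving $n\in\{7,13\}$, yield $s(n^2)\in\{3,4\}$ and hence fall outside $E_5$. This identifies $E_{5,3}$ exactly with the union of the three families appearing in \eqref{Edecomp}.

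The remaining obstacle, and where the proof needs nontrivial external input, is to certify that $E':=\bigcup_{\lambda\geq 4}E_{5,\lambda}$ is finite, since \eqref{union5lambda} alone only controls $\lambda\leq 15$. Here I would invoke the effective bounds of Kaneko and Stoll~\cite{kanekostoll2021}, which relate $s(n)$ to $s(n^2)$: specialized to $s(n^2)=5$ these force $\lambda\leq \lambda_0$ for some explicit $\lambda_0$, so that only finitely many $\lambda$ contribute. For each such $\lambda$ the algorithm \texttt{max-integer} returns a finite $E_{5,\lambda}$, and the finite union is precisely $E'$. Without an external bound on $\lambda$, finiteness cannot be certified by enumeration alone; this is the deepest step of the argument.
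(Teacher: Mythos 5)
Your treatment of the two enumerations follows the paper's route (run \texttt{max-integer} over the admissible pairs), and your direct analysis of $E_{5,3}$ by expanding $n=1+2^a+2^b$ and classifying collisions among the six exponents $\{0,a+1,2a,b+1,a+b+1,2b\}$ is correct and in fact cleaner than the paper's derivation of the three families, which goes through the block factorization and only covers sufficiently large $n$; your computation pins down $E_{5,3}$ exactly. One smaller omission: for the enumerations you need to explain why the search terminates over the ``gap'' parameter as well as over the weight --- the paper proves a separate lemma that, writing $n=1+2^{\ell}m$ with $s(n^2)=4$, one has $\ell\leq 2s(n)$, and analogous reductions for $s(n^2)=5$; without such a bound the case distinction fed to \texttt{max-integer} is not finite.

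The genuine gap is in your final step, the finiteness of $E'$. You propose to bound $\lambda=s(n)$ by ``effective bounds of Kaneko and Stoll which relate $s(n)$ to $s(n^2)$,'' but no such bound exists in that reference: their results (the paper's Lemmas on $s(ab)=2$ and $s(ab)=3$) bound the \emph{size} of a product $ab$ in terms of $s(a)$ and $s(b)$ when $s(ab)\in\{2,3\}$; they say nothing about bounding $s(n)$ when $s(n^2)=5$, and the paper explicitly remarks that no universal bound on $\lambda$ is known even for $s(n^2)=4$ (this is precisely why \eqref{union4lambda} stops at $\lambda\leq 17$ and Conjecture~\ref{conjectureE4} remains open). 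The paper's actual argument for \eqref{Edecomp} does not bound $\lambda$ at all: it adapts the Factorization Lemma (Lemma~\ref{factorization}) to the hypothesis $s(n^2)=5$, so that every sufficiently large solution factors as $x_m\tO^{\ell_m}\cdots\tO^{\ell_1}x_0$ with well-separated blocks; since the $2m+1$ pairwise non-interfering summands $y_{m,m},\dots,y_{0,0}$ each contribute at least one $\tL$-bit to $n^2$, one gets $2m+1\leq 5$, hence $m\leq 2$, and the case analysis for $m=1$ and $m=2$ forces $x_i\in\{1,3\}$ and hence $s(n)=3$ with $n$ in one of the three families. Thus all solutions with $s(n)\geq 4$, and all $s(n)=3$ solutions outside the families, lie below the factorization threshold and form the finite set $E'$. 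You should replace your appeal to a nonexistent digit-sum bound with this structural argument (or supply a genuine uniform bound, which would be a new result).
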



This theorem gives more evidence on~(\ref{conjE4}): all $E_{4,\lambda}$ are empty sets for $5\leq \lambda\leq 17$. The result might also point towards a possible computational proof if we could establish a universal bound for $\lambda$.
For~(\ref{Edecomp}) we conjecture:
\begin{conjecture}
$$E'=\{29,\;31,\;51,\;79,\;91,\;95,\;157,\;223,\;279,\;479,\;727,\;1471,\;5793\}.$$
\end{conjecture}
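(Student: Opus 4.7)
The plan is to split the conjecture along the decomposition in~\eqref{Edecomp}, observing that every integer in the proposed set for $E'$ satisfies $s(n)\geq 4$, while every integer in the three parametric families satisfies $s(n)=3$. The conjecture is therefore equivalent to proving that (i) $E_{5,3}$ coincides exactly with the three parametric families, and (ii) $\bigcup_{\lambda\geq 4} E_{5,\lambda}$ consists precisely of the thirteen listed sporadic integers.

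Step (i) should be a finite combinatorial analysis. Writing a generic odd 3-bit integer as $n=1+2^a+2^b$ with $1\leq a<b$, one expands
\[
n^2 = 1+2^{a+1}+2^{b+1}+2^{2a}+2^{a+b+1}+2^{2b},
\]
and enumerates all ways in which the six exponents on the right can collapse in binary to exactly five 1-bits. The non-trivial pairwise equalities among the exponents reduce to the three cases $a=1$, $b=2a-1$, and $b=a+1$, which correspond respectively to the three families in~\eqref{Edecomp}. One then verifies that configurations with two or more simultaneous collisions, or with propagating carries, yield either $s(n^2)=3$ or $s(n^2)=4$, accounting for the overlap with $E_3$ recorded in~\eqref{s(n2)=3} and with the sporadic elements of $E_4$, so that no new sporadic solution with $s(n^2)=5$ and $s(n)=3$ arises.

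Step (ii) is the main obstacle. The natural route is to combine an effective upper bound $\Lambda$ ensuring $E_{5,\lambda}=\emptyset$ for $\lambda>\Lambda$ with an extension of the \texttt{max-integer} algorithm confirming the conjecture over the range $16\leq\lambda\leq\Lambda$. A theoretical bound could be produced by encoding $s(n^2)=5$ as the $S$-unit equation $n^2=1+2^{e_1}+2^{e_2}+2^{e_3}+2^{e_4}$ in $\mathbb{Z}[1/2]$ and applying the Bennett-Bugeaud-Mignotte and Corvaja-Zannier techniques~\cite{bennettbugeaudmignotte2012,corvaja} to bound the exponent gaps via linear forms in logarithms. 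This could then be combined with the digit estimates of Kaneko and Stoll~\cite{kanekostoll2021} and the constraint $s(n)=\lambda\geq 4$ to derive an effective bound on $\lambda$ itself.

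The principal difficulty is quantitative: the bounds produced by Baker-type methods are typically astronomically larger than the reach of \texttt{max-integer}, which is precisely why Conjecture~\ref{conjectureE4} for $E_4$ remains open. Closing the gap for $E_5$ would therefore require either substantially sharper Diophantine input, for instance explicit hyperelliptic curve computations analogous to those underlying the finiteness proof for $s(n^2)=4$, or a new combinatorial factorization lemma in the spirit of the one used in the proof of Theorem~\ref{main_theorem}, constraining the block structure of odd $n$ with $s(n^2)=5$ strongly enough to bring the verification within computational reach.
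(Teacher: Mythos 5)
The statement you are addressing is left as a \emph{conjecture} in the paper; there is no proof to compare against, only supporting evidence. What the paper actually establishes is (a) Lemma~\ref{lemn25}, which shows that $E'$ is finite and that every odd solution of $s(n^2)=5$ with $s(n)=3$ lies in one of the three parametric families, so that $E'$ may be identified with $\bigcup_{\lambda\geq 4}E_{5,\lambda}$; and (b) the \texttt{max-integer} computation behind Theorem~\ref{thm_s(n2)=4 and 5}, which shows that $\bigcup_{4\leq\lambda\leq 15}E_{5,\lambda}$ is exactly the thirteen listed integers. All cases $\lambda\geq 16$ remain open. Your step (i) recovers the content of Lemma~\ref{lemn25}(2) by directly expanding $(1+2^a+2^b)^2$ and classifying the exponent collisions; the paper instead derives it from the adapted factorization lemma ($m=1$ and $m=2$ block decompositions), which only covers sufficiently large $n$ and needs a finite check for the rest, so your direct route is in fact slightly cleaner for weight-$3$ integers and is correct in outline.

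Your step (ii) is where no proof exists, and you say so yourself. The Baker-type/$S$-unit strategy you sketch would at best reproduce bounds astronomically beyond the reach of \texttt{max-integer}; nothing in \cite{bennettbugeaudmignotte2012}, \cite{corvaja} or \cite{kanekostoll2021} yields an effective $\Lambda$ with $E_{5,\lambda}=\emptyset$ for $\lambda>\Lambda$ that could be matched computationally, and this is exactly the obstruction that keeps Conjecture~\ref{conjectureE4} open as well. So your proposal is an accurate description of what a proof would require rather than a proof: the genuine gap is the absence of an effective, computationally accessible bound on $\lambda$, which is precisely why the paper states this as a conjecture. A realistic checkable contribution would be to extend the verified range of $\lambda$ beyond $15$ (the paper reaches $\lambda\leq 17$ for the analogous $E_4$ question), not to close the conjecture.
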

This is supported by the fact that the largest weight of an integer in $E'$ occurs for $n=1471$, namely $s(1471)=9$, so that the sets $E_{5,\lambda}$ are all empty for $10\leq \lambda\leq 15$.


\bigskip
\bigskip

The paper is structured as follows. In Section~\ref{sec-aux}, we state, collect and prove some related 
auxiliary results; in particular, we give a new combinatorial factorization lemma that is at the core of our method. Section~\ref{sec-pr} is devoted to the proof of \Cref{main_theorem}, where we make use of the algorithm \texttt{next}. In Section~\ref{sec-45}, we give the proof of \Cref{thm_s(n2)=4 and 5} where we rely on the algorithm \texttt{max-integer}. We postpone, for an easier readability, the detailed description of the two algorithms and their implementation to Section~\ref{sec-algos}. We finally conclude in Section~\ref{sec-rem1415} with some remarks on the remaining cases of~\eqref{problem}, i.e. the cases $k=14$ and $k=15$, that remain unsettled.

\section{Preliminaries}\label{sec-aux}

Let $n=\sum_{i=0}^{\ell} \varepsilon_i 2^i$ with $\varepsilon_i\in\{0,1\}$ and $\varepsilon_\ell=1$. We write $(n)_2$ to refer to the binary expansion of the integer $n$. Recall that $s(n)=\sum_{i=0}^{\ell} \varepsilon_i$ is the sum of digits of $n$. We use the letter $\mathsf x$ (with or without indices) to denote binary blocks that always end in $\tL$, so that they correspond to the binary expansions of odd integers. In the language of combinatorics on words, $\mathsf x$ will be a non-empty word over the alphabet $\{\tO, \tL\}$ (the ``bits'') whose rightmost symbol is $\tL$. We always use boldface to talk about bits. To keep notation as simple and readable as possible, we will use $x$ (note the change in the font) to denote the associated odd integer. For example, for  $\mathsf x=\tL \tL$ we have $x=3$; also, we write $1$ for the integer ``one'', and $\tL$ for the one-bit. Note that $\ell=\ell((n)_2)$ is the length of the binary expansion of the integer $n$. Again, for simplicity reasons, we also use $\ell(n)$ for $\ell((n)_2)$. As usual, in combinatorics on words, we will write $\mathsf{x}\mathsf{y}$ for the concatenation of the binary words $\mathsf{x}$ and $\mathsf{y} $, and $\mathsf{x}^\ell$ for the $\ell$-fold concatenation of the word $\mathsf{x}$. Moreover, we write $|\mathsf{x}|$ for the length of the word $\mathsf{x}$. On the other hand, $xy$, or $x\cdot y$, will systematically denote the multiplication of the integers $x$ and $y$.

For an odd integer $n$ such that $s(n)=k$, we consider a decomposition of its binary expansion in the form
\begin{equation}\label{decomp}
(n)_2=\mathsf{x}_m \tO^{\ell_m} \mathsf{x}_{m-1} \cdots \mathsf{x}_1 \tO^{\ell_1} \mathsf{x}_0
\end{equation}
into $m<k$ blocks of $\tO$-bits of length $\ell_i$, for $1 \leq i \leq m$, which separate $\mathsf{x}_i$, for $0 \leq i \leq m$. Recall that all $\mathsf{x}_i$ end in $\tL$-bits, so that they correspond to the binary expansion of odd integers. Note that the decomposition~(\ref{decomp}) is not unique since we can merge or split inner blocks to obtain other factorizations. Let us denote $y_{i,j}=x_i\cdot x_j$ for $0 \leq i,j \leq m$, and $\mathsf{y}_{i,j}$ the associated binary blocks that will compose $n^2$. Note that $\mathsf{y}_{i,j}$ again ends in a $\tL$-bit. Let $$\hat{\ell_j}=\sum \limits_{i=1}^j (\ell_i +\ell(x_{i-1}))$$ for $0\leq j \leq m$, which represents the length of $\tO^{\ell_j} \mathsf{x}_{j-1} \cdots \mathsf{x}_1 \tO^{\ell_1} \mathsf{x}_0$. Furthermore let $\ell_{i,i}=2\hat{\ell_i}$ for $0\leq i\leq m$ and $\ell_{i,j}=\hat{\ell_i}+\hat{\ell_j}+1$ for $0\leq i,j \leq m$ and $i\neq j$. Then the square $n^2$ is the sum of the integers \begin{align*}
&u_{i,i}=2^{\ell{i,i}}y_{i,i} \quad \text{for } 0\leq i \leq m \text{ and }\\
&u_{i,j}=2^{\ell{i,j}}y_{i,j} \quad \text{for } 0\leq i <j \leq m.
\end{align*}

We say that the summand $y_{i,j}$ \textit{interferes} with the summand $y_{i',j'}$ if, in the addition of the two terms written in binary, a carry propagation caused by $y_{i,j}$ reaches a binary bit of $y_{i',j'}$, or vice-versa (we take liberty to say also, that $\mathsf{y}_{i,j}$ interferes with $\mathsf{y}_{i',j'}$). We will frequently discuss the situation on how many $\tL$-bits remain in the addition of interfering terms. We will reject possibilities when the additions lead to numbers with too many $\tL$-bits. If blocks are \textit{non-interfering} then the number of $\tL$-bits of the their sum is the sum of the $\tL$-bits of the summands. Let us explain the procedure with an example. If we add $\tL\tL\tL$ or its shifts to $\tL \tL \tL \tO \tO \tO\tO\tL$, we observe that it is impossible to find shifts in a way that the sum of the two summands gives a single $\tL$-bit:
\begin{align*}
  &\quad \tL \tL \tL \tO \tO \tO\tO\tL&\\
   +\quad&\longleftarrow\tL\tL\tL \longrightarrow
\end{align*}

This procedure can be easily implemented: it is sufficient to test via one \texttt{for}-loop. If more than two terms are added together, then more \texttt{for}-loops will do the job.

\medskip
 The next lemma gives a sufficient condition for non-interference between two summands. 

\begin{lem}\label{interference}
Let  $y_{i,j}$, $y_{i',j'}$ be two summands defined as before. If $\ell_{i,j} \geq \ell_{i',j'} + \ell(y_{i',j'}) + k^2$, then $y_{i,j}$ does not interfere with $y_{i',j'}$. \end{lem}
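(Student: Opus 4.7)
The claim boils down to a direct bit-position analysis, so I would start by making precise where the nonzero bits of each summand can live. Since $y_{i',j'}$ is a product of the odd integers $x_{i'}$ and $x_{j'}$, the integer $u_{i',j'}=2^{\ell_{i',j'}}y_{i',j'}$ has all of its nonzero bits in the interval $[\ell_{i',j'},\, \ell_{i',j'}+\ell(y_{i',j'})-1]$, with the highest nonzero bit at the right endpoint (by definition of $\ell(\cdot)$). Similarly, $u_{i,j}=2^{\ell_{i,j}}y_{i,j}$ has its lowest nonzero bit at position $\ell_{i,j}$, since $y_{i,j}=x_ix_j$ is odd. Under the hypothesis $\ell_{i,j}\geq \ell_{i',j'}+\ell(y_{i',j'})+k^2$, there is therefore a block of at least $k^2+1$ consecutive bit positions that are zero in both summands and that lies strictly between their supports.

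For the pairwise addition $u_{i,j}+u_{i',j'}$ this disjointness of supports already settles the matter: no position receives a contribution from both summands, so no carry is ever generated, and in particular no carry from one summand reaches any bit of the other. By the definition of interference given just before the lemma, this is exactly what must be shown, and one gets the stronger identity $s(u_{i,j}+u_{i',j'})=s(y_{i,j})+s(y_{i',j'})$ as a by-product.

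The reason the margin is chosen as $k^2$ rather than merely a strict inequality is to make the conclusion robust when the lemma is later applied inside the full sum $n^2=\sum_{a,b}u_{a,b}$: using $s(y_{a,b})\leq s(x_a)s(x_b)$ together with $\sum_a s(x_a)=s(n)=k$, the total number of $\tL$-bits contributed by all $(m+1)^2$ summands is at most $k^2$, which is also a crude upper bound for the length of any carry chain that the full addition can sustain. A zero gap of width $k^2+1$ therefore cannot be bridged by carries coming from below. I expect the only mildly delicate point in the write-up to be this second observation, namely justifying that carries in the whole sum cannot creep across a zero window of length $k^2$; the disjointness of the two bit-supports under the hypothesis is essentially immediate.
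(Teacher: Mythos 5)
Your argument is correct and is essentially the paper's own: the hypothesis forces a window of roughly $k^2$ consecutive all-zero bit positions between the supports of the two shifted summands, and since the total number of $\tL$-bits over all summands is at most $k+\frac{k(k-1)}{2}<k^2$, no carry chain in the full addition can bridge that window. The paper's proof is precisely the content of your third paragraph compressed into one sentence (your first two paragraphs, on disjoint supports and the trivial pairwise case, are correct preliminaries, up to an immaterial off-by-one in the gap width stemming from the paper's convention that $\ell(\cdot)$ denotes the index of the top bit).
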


\begin{proof}
If these two summands interfered, then there would be at least $k^2$ $\tL$-bits involved in the carry propagation from $y_{i',j'}$ to $y_{i,j}$. But the number of $\tL$-bits in all summands is at most $k+\frac{k(k-1)}{2}<k^2$. 
\end{proof}

Let $\ell_{\min}=\min_{1\leq i \leq m} (\ell_i)$ and $\ell_{\max}=\max_{0\leq i \leq m} \ell(x_i) $. By \Cref{interference} we can deduce that if $\ell_{\min}>2\ell_{\max}+k^2$, then two summands $y_{i,j}$, $y_{i',j'}$ do not interfere if $i>i'$ and $j\geq j'$.

\begin{lem}[Factorization lemma]\label{factorization}
For $k\geq 1$ there is a bound $N_k$ such that the binary expansion of every odd $n\geq N_k$ that satisfies $s(n)=s(n^2)=k$ can be factorized as
\begin{equation}\label{decompfactorization}
(n)_2=\mathsf{x}_m \tO^{\ell_m} \mathsf{x}_{m-1} \cdots \mathsf{x}_1 \tO^{\ell_1} \mathsf{x}_0
\end{equation} 
where $1\leq m <k$, $\mathsf{x}_0,\ldots, \mathsf{x}_m$ are the binary words corresponding to the binary expansions of odd integers and $\ell_1,\ldots,\ell_m \in \mathbb{N}$ such that $\ell_{\min}>2x_{\max} +k^2$ where $\ell_{\min}=\min_{1\leq i \leq m}(\ell_i)$ and $x_{\max}=\max_{0\leq i \leq m}  |\mathsf{x}_i|  $.
\end{lem}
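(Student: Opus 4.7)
The plan is to construct the desired factorization by a greedy coarsening procedure that starts from the finest natural decomposition and merges adjacent blocks whenever the required inequality fails. First I would take the \emph{initial} decomposition in which each $x_i$ is a maximal run of consecutive $\tL$-bits of $(n)_2$ and each $\tO^{\ell_i}$ is a maximal run of $\tO$-bits. Since $s(n)=k$, the total number of $\tL$-bits is $k$, so the initial decomposition has $m_0+1\leq k$ blocks and satisfies the crude bound $x_{\max}^{(0)}\leq k$. Each $x_i$ still ends in $\tL$, hence is the binary expansion of an odd integer, as required.

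Then I would iterate the following test-and-merge loop. If the current decomposition satisfies $\ell_{\min}>2x_{\max}+k^2$, stop and return it. Otherwise, pick an index $j$ with $\ell_j=\ell_{\min}$ and replace the pair of adjacent blocks $x_j \tO^{\ell_j} x_{j-1}$ by the single block $x_j' := x_j \tO^{\ell_j} x_{j-1}$, decreasing $m$ by one. The new block still ends in $\tL$, so the structural hypotheses of \eqref{decompfactorization} are preserved, and each iteration strictly reduces $m$; thus the procedure terminates in at most $m_0\leq k-1$ steps.

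The key quantitative ingredient is a growth estimate for $x_{\max}$ under one merge. At the stage where we merge, the failure of the condition gives $\ell_{\min}^{(s)}\leq 2x_{\max}^{(s)}+k^2$, so the newly created block has length at most
\[
2x_{\max}^{(s)}+\ell_{\min}^{(s)}\leq 4x_{\max}^{(s)}+k^2.
\]
Thus $x_{\max}^{(s+1)}\leq 4x_{\max}^{(s)}+k^2$, which together with $x_{\max}^{(0)}\leq k$ yields by induction an explicit constant, say
\[
C(k):=4^{k}k+\tfrac{k^2(4^{k}-1)}{3},
\]
such that $x_{\max}^{(s)}\leq C(k)$ for every $s\leq k-1$ reached by the procedure.

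If the procedure never produces a valid decomposition, it must run all the way to $m=0$, which means the entire binary expansion $(n)_2$ has been merged into a single block; but the length of this block is $\ell(n)$ and is bounded by $C(k)$. Setting $N_k:=2^{C(k)+1}$ (or any larger integer) therefore forces every odd $n\geq N_k$ with $s(n)=s(n^2)=k$ to exit the loop at some intermediate stage with $m\geq 1$ and the inequality $\ell_{\min}>2x_{\max}+k^2$ satisfied, which is the claim. The only real obstacle I foresee is bookkeeping the recurrence for $x_{\max}$ cleanly; once that linear recurrence is written down the rest is essentially immediate, because the number of merges is bounded by $k-1$, a function of $k$ alone.
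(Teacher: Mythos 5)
Your proposal is correct and follows essentially the same route as the paper: a greedy merge of blocks across too-short $\tO$-runs, with the growth estimate $x_{\max}\mapsto 4x_{\max}+k^2$ iterated at most $k$ times (the paper takes $N_k=2^{f^k(1)}$ with $f(i)=4i+k^2$, which is the same recurrence you solve in closed form). Your one-merge-at-a-time formulation and the explicit termination argument at $m=0$ are just a cleaner bookkeeping of the paper's inductive step.
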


\begin{proof}
Let $f(i)=4i+k^2$, let $N_k=2^{f^k(1)}$ where $f^k(1)$ is the $k$-fold composition of $f$ evaluated at $1$. Consider an odd integer $n\geq N_k$ such that $s(n)=k$. If the binary expansion of $n$ contains $k-1$ blocks of $\tO$-bits and if each of these $\tO$-blocks is longer than $k^2+2$, then each $\tL$-bit in the expansion forms one of the $x_i$ with $m=k-1$ and we are done. Otherwise, we combine all $\tO$-blocks which have a length at most $k^2+2$ with its bordering $\tL$-bits and make this one of the factors $x_i$. If all remaining $\tO$-blocks are longer than $2(k^2+4)+k^2$, we find a suitable factorization of $(n)_2$ with $m=k-2$. Otherwise, we continue inductively and obtain $n>2^{f^k(1)}$ has a desired factorization. 
\end{proof}

\bigskip

We will also need some elementary results on multiples of $3$ with few non-zero digits.

\begin{lem} \label{useful_1}
Let $n$ be an odd integer with $s(3n)=2$. Then $(n)_2\in \{  (\tL \tO)^\ell \tL \tL: \; \ell\geq 0\}\cup \{\tL\}$.
\end{lem}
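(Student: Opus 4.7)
The plan is to reduce $s(3n)=2$ to a single Diophantine equation for $3n$ and then read off its binary expansion. Since $n$ is odd, $3n$ is odd, so writing $3n=2^a+2^b$ with $a>b\geq 0$ forces $b=0$; hence $3n=2^a+1$ for some $a\geq 1$. The divisibility condition $3\mid 2^a+1$ combined with $2^a\equiv(-1)^a\pmod 3$ forces $a$ to be odd, so write $a=2\ell+1$ with $\ell\geq 0$.

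Next, I would compute $n$ explicitly by the geometric-series identity
$$n=\frac{2^{2\ell+1}+1}{3}=1+2\cdot\frac{4^\ell-1}{3}=1+\sum_{i=1}^{\ell}2^{2i-1}.$$
For $\ell=0$ the empty sum yields $n=1$, whose binary expansion is $\tL$. For $\ell\geq 1$ the formula exhibits $\tL$-bits precisely at positions $0,1,3,5,\ldots,2\ell-1$, so $(n)_2=(\tL\tO)^{\ell-1}\tL\tL$. Reindexing by $\ell'=\ell-1\geq 0$ produces exactly the sets in the statement.

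There is essentially no obstacle: the content of the lemma is a direct decoding of $3n=2^a+1$. The only thing to be careful about is the bookkeeping of the two cases $\ell=0$ (giving the singleton $\{\tL\}$) versus $\ell\geq 1$ (giving the parametric family). A fully equivalent route, which I might mention as a sanity check, is to verify by direct binary multiplication that $3\cdot(\tL\tO)^{\ell'}\tL\tL=\tL\tO^{2\ell'+2}\tL=2^{2\ell'+3}+1$, confirming that every integer in the claimed set has $s(3n)=2$; uniqueness of $n=(2^a+1)/3$ in terms of the odd exponent $a$ then guarantees that no further solutions exist.
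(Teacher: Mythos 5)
Your proof is correct, but it takes a genuinely different route from the paper. You invert the problem: from $s(3n)=2$ and the oddness of $3n$ you deduce $3n=2^a+1$, use $2^a\equiv(-1)^a\pmod 3$ to force $a=2\ell+1$, and then read off $(n)_2$ from the closed form $n=(2^{2\ell+1}+1)/3=1+\sum_{i=1}^{\ell}2^{2i-1}$. The paper instead argues directly on the binary addition $2n+n$: writing $n=\tL\,\varepsilon_d\cdots\varepsilon_0\,\tL$, it observes that the trailing $\tL$ survives, so the remaining single $\tL$-bit of $3n$ must come from a carry propagating all the way to the top, which forces $\varepsilon_0=\varepsilon_2=\cdots=\tL$ and $\varepsilon_1=\varepsilon_3=\cdots=\tO$. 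Your Diophantine decoding is arguably cleaner and fully rigorous for this particular lemma (the explicit formula leaves nothing to check), whereas the paper's carry-propagation argument is the one that scales to the subsequent Lemmas on $s(3n)=3$ and $s(3n)=4$, where $3n$ is no longer of the rigid form $2^a+2^b$ and a direct closed-form inversion is unavailable. Your closing sanity check, $3\cdot(\tL\tO)^{\ell'}\tL\tL=2^{2\ell'+3}+1$, is also a nice touch confirming that the classification is exact.
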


\begin{proof}
 For $n\geq 5$ we set $(n)_2=\tL \varepsilon_d \varepsilon_{d-1}\cdots \varepsilon_0 \tL$ $(\varepsilon_i\in\{\tO, \tL\}, d\geq 0)$ and observe that the usual addition $2n+n$ translates into
\begin{align*}
  &\tL \;\;\varepsilon_d \;\;\varepsilon_{d-1}\;\;\cdots \;\;\varepsilon_1\;\;\varepsilon_0 \;\;\;\tL&\\
   +\quad&\;\;\;\;\;\tL\;\;\; \varepsilon_d\;\;\; \varepsilon_{d-1}\;\cdots \;\;\varepsilon_1\;\;\varepsilon_0\;\; \tL.
\end{align*}
Since the last $\tL$-bit will stay after the addition, the addition of the penultimate $\tL$ to $\varepsilon_0$ must give rise to a carry that propagates up to the highest significant digits. The only way to make this happen without creating additional $\tL$-bits in the sum is $\varepsilon_0=\varepsilon_2=\varepsilon_4=\cdots=\tL$ and $\varepsilon_1=\varepsilon_3=\varepsilon_5=\cdots=\tO$.
\end{proof}

\begin{lem}\label{s(3x)=4}
Let $n$ be an odd integer with $s(3n)=4$. Then $(n)_2=\mathsf{x}_1 \tO^s \mathsf{x}_0$ for some $s \geq 2$ with 
$$\mathsf{x}_1,\mathsf{x}_0\in \{(\tL \tO)^\ell \tL \tL: \; \ell\geq 1\}\cup \{\tL\},$$
or $n\leq 2^{2s(n)-1}$.
\end{lem}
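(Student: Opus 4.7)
The plan is to reduce to Lemma~\ref{useful_1} by splitting $n$ at a sufficiently long zero-run in its binary expansion, and otherwise use a crude length bound.

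First I would dispose of the easy case: if every maximal $\tO$-run in $(n)_2$ has length at most $1$, then between any two consecutive $\tL$-bits there is at most one $\tO$-bit, so $(n)_2$ has length at most $s(n)+(s(n)-1)=2s(n)-1$ and hence $n<2^{2s(n)-1}$, matching the alternative in the statement.

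Otherwise, pick a maximal zero-run of length $s\geq 2$ and write $n=B\cdot 2^{T}+A$, where $(n)_2=(B)_2\,\tO^{s}\,(A)_2$ and $T=\ell(A)+s$. Since $n$ is odd, $A$ is odd; maximality of the zero-run forces the bit immediately above it to be $\tL$, so $B$ is odd as well, and both are positive. The key non-interference observation is that $s\geq 2$ yields
\begin{equation*}
3A<4A<2^{\ell(A)+2}\leq 2^{T},
\end{equation*}
so in forming $3n=3A+3B\cdot 2^{T}$ the two summands occupy disjoint bit positions. Consequently $s(3n)=s(3A)+s(3B)$. Because $A$ and $B$ are positive odd integers, we have $s(3A), s(3B)\geq 2$, and combined with $s(3n)=4$ this forces $s(3A)=s(3B)=2$. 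Lemma~\ref{useful_1} then places both $A$ and $B$ in $\{(\tL\tO)^{\ell}\tL\tL:\ell\geq 0\}\cup\{\tL\}$, yielding the desired factorization $(n)_2=x_1\tO^s x_0$ with $x_1=B$ and $x_0=A$.

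The main obstacle is bookkeeping for uniqueness and boundary cases. If $(n)_2$ contained two distinct maximal zero-runs of length $\geq 2$, then splitting at one of them would force the corresponding factor (containing the other $\tO^{\geq 2}$ internally) to harbor an internal $\tO\tO$ substring, contradicting the shape $(\tL\tO)^{\ell}\tL\tL$ or $\tL$ from Lemma~\ref{useful_1}; consequently at most one such zero-run can exist, and the splitting is canonical. The boundary case $\ell=0$ in Lemma~\ref{useful_1} (where $x_i=\tL\tL$) is the only point of tension with the stated $\ell\geq 1$ restriction, and handling it will require either absorbing the resulting short configurations into the alternative $n\leq 2^{2s(n)-1}$, or reorganizing the factorization so that a trailing $\tL\tL$ is split differently.
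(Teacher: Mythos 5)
Your proof is correct and is essentially the paper's own argument: split $(n)_2$ at a maximal $\tO$-run of length $s\geq 2$, observe that $3A<2^{T}$ makes the two portions of $2n+n$ non-interfering so that $s(3A)=s(3B)=2$, apply Lemma~\ref{useful_1}, and otherwise bound the length of $(n)_2$ by $2s(n)-1$. The tension you flag at $\ell=0$ is genuine but is a slip in the lemma's statement rather than a gap in your argument: the paper's proof, like yours, only yields $x_1,x_0\in\{(\tL\tO)^{\ell}\tL\tL:\ \ell\geq 0\}\cup\{\tL\}$, and since for example $n=3\cdot 2^{12}+3$ satisfies $s(3n)=4$ with $x_1=x_0=\tL\tL$ and $n>2^{2s(n)-1}$, the conclusion cannot be repaired by absorbing the $\ell=0$ case into the fallback bound, so the statement should read $\ell\geq 0$.
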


\begin{proof}
If there is a block of $\tO$'s of length $\geq 2$ inside $(n)_2$, then in the addition of $2n+n=3n$ there are non-interfering terms and the additions have to amount for $2$ bits in the sum of the corresponding portions. Lemma~\ref{useful_1} shows that the only possibilities are the blocks $(\tL \tO)^\ell \tL \tL$ for some $\ell \geq 0$, and the block consisting of a single $\tL$. This gives the first part in the statement. If there is no block of consecutive $\tO$'s of length $\geq 2$ then $x$ is evidently bounded by $2^{2s(n)}-1$.
\end{proof}


\begin{lem}\label{s(3x)=3}
Let $n$ be an odd integer with $s(3n)=3$. Then
$$(n)_2\in \{\tL(\tO \tL)^{\ell_1}(\tL \tO)^{\ell_2} \tL \tL: \; \ell_1,\ell_2\geq 0\}$$
\end{lem}

\begin{proof}
Recall the reasoning of the proof of Lemma~\ref{useful_1} first. A possible carry has to propagate all the way up to the highest significant digits in order to generate only $\tO$-bits except the lowest significant bit and the highest significant bit. In the former proof, this implied an alternation of $\tO$-bits and $\tL$-bits in the middle part. In the statement of the present lemma, since we want three $\tL$-bits in the resulting sum, we need to break this alternation at least once. We therefore have the following addition: 

\begin{align*}
  &\tL \;\;\varepsilon_d \; \; \cdots \;\;\varepsilon_{k} \;\; \tL \;\;\; \underline{\tL}\;\; \tO \;\;\cdots \;\; \tL \;\;\tO\;\;\tL \;\;\tL&\\
   +\quad&\;\;\;\;\tL\;\;\; \varepsilon_d\;\;\; \cdots \;\; \varepsilon_{k}\; \underline{\tL} \;\; \tL \;\; \tO\; \; \cdots \;\; \tL \;\;\tO\;\;\tL\;\; \tL.
\end{align*}

The lowest significant $\tL$-bit will stay after the summation of the two numbers (it does not interact with the other bits). In the overlapping $\tL$-bits at the breaking point (underlined in the above addition scheme) there will remain one $\tL$-bit in the sum. The addition of these bits generates a carry that has to generate only $\tO$-bits up to the highest significant $\tL$-bit. The only way to achieve this is again to alternate the $\tO$-bits and $\tL$-bits. This directly translates into the given form. 
\end{proof}


\begin{lem} \label{useful_2}
If $(n)_2\in\{(\tL\tO)^{\ell}\tL\tL:\; \ell\geq 2\}$ then $s(n^2)\geq 7$. 
\end{lem}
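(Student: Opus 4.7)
The plan is to exhibit $4$ set bits at the low end of $n^2$ and $3$ set bits at the high end, and then to check that these $7$ positions are disjoint when $\ell\geq 2$. Everything rests on the closed form
$$n=\frac{2^{2\ell+3}+1}{3},$$
which follows from summing the geometric series $1+\sum_{i=0}^{\ell}2^{2i+1}$ and which is consistent with the fact that $s(3n)=2$, cf.\ Lemma~\ref{useful_1}. Setting $L:=2\ell+3$, squaring yields the clean identity
$$9n^2=(2^L+1)^2=2^{4\ell+6}+2^{2\ell+4}+1.$$

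For the low-order window I would reduce modulo $128$. For $\ell\geq 2$ one has $2\ell+4\geq 8$ and $4\ell+6\geq 14$, so both exponential terms on the right vanish modulo $128$, leaving $9n^2\equiv 1\pmod{128}$. From $9\cdot 57=513=4\cdot 128+1$ we get $9^{-1}\equiv 57\pmod{128}$, hence
$$n^2\equiv 57=2^5+2^4+2^3+2^0\pmod{128}.$$
This reads off bits $0,3,4,5$ of $n^2$ as being $\tL$.

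For the high-order window I would bracket $n^2/2^{4\ell}$. Dividing the identity for $9n^2$ by $9\cdot 2^{4\ell}$ gives
$$\frac{n^2}{2^{4\ell}}=\frac{64+2^{4-2\ell}+2^{-4\ell}}{9},$$
whose numerator lies strictly between $64$ and $66$ as soon as $\ell\geq 2$. Since $63<64$ and $66<72$, this forces $7<n^2/2^{4\ell}<8$, so $\lfloor n^2/2^{4\ell}\rfloor=7=(\tL\tL\tL)_2$. Thus bits $4\ell$, $4\ell+1$, and $4\ell+2$ of $n^2$ are all $\tL$, and in particular $4\ell+2$ is the top bit.

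Finally, for $\ell\geq 2$ the inequality $4\ell\geq 8>5$ guarantees that the two bit sets $\{0,3,4,5\}$ and $\{4\ell,4\ell+1,4\ell+2\}$ are disjoint, so $s(n^2)\geq 4+3=7$. The argument is essentially combinatorial bookkeeping: I do not expect any real obstacle beyond spotting the closed form $9n^2=2^{4\ell+6}+2^{2\ell+4}+1$, after which both the low-order and the high-order analyses reduce to one-line inequalities.
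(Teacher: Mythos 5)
Your proof is correct. The closed form $n=(2^{2\ell+3}+1)/3$ is right (the string $(\tL\tO)^{\ell}\tL\tL$ encodes $3+\sum_{i=1}^{\ell}2^{2i+1}$, which telescopes to that fraction), the identity $9n^2=2^{4\ell+6}+2^{2\ell+4}+1$ follows, the reduction $n^2\equiv 57\pmod{128}$ is valid for $\ell\geq 2$ (both exponents $2\ell+4\geq 8$ and $4\ell+6\geq 14$ clear the modulus, and $9\cdot 57\equiv 1\pmod{128}$), and the bracketing $63<64<9\,n^2/2^{4\ell}<66<72$ correctly pins $\lfloor n^2/2^{4\ell}\rfloor=7$. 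The windows $\{0,3,4,5\}$ and $\{4\ell,4\ell+1,4\ell+2\}$ are disjoint since $4\ell\geq 8$, giving $s(n^2)\geq 7$, which is sharp at $\ell=2$.

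Your route differs from the paper's. The paper simply asserts, as "an elementary calculation," the complete binary expansion of $n^2$ in three cases according to $\ell\bmod 3$, from which the digit count can be read off exactly (and in fact grows linearly in $\ell$); the verification of those three closed-form expansions is left to the reader and implicitly requires tracking the full carry pattern for all $\ell$. Your argument deliberately avoids computing the middle of $n^2$: you extract only a fixed low-order window modulo $128$ and the top three bits via a floor estimate, and count seven guaranteed ones. This buys a short, self-contained, case-free proof of exactly what the lemma claims (a lower bound of $7$, which is all that is used downstream), at the cost of losing the exact value of $s(n^2)$ that the paper's explicit expansions provide. Both are sound; yours is arguably the more economical proof of the stated inequality.
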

\begin{proof}
An elementary calculation shows that, if $(n)_2=(\tL\tO)^{\ell}\tL\tL$, $\ell\geq 1$, then
$$(n^2)_2= 
\begin{cases} 
{ (\tL\tL\tL\tO\tO\tO)^{j-1}\tL\tL\tL\tO\tO\tL(\tO\tO\tO\tL\tL\tL)^{j}\tO\tO\tL} &\mbox{if } \ell=3j \mbox{  for some  } j\geq 1,\\
{ (\tL\tL\tL\tO\tO\tO)^{j}\tL\tL\tL\tL(\tO\tO\tO\tL\tL\tL)^{j}\tO\tO\tL} &\mbox{if } \ell=3j+1 \mbox{  for some  } j\geq 0,\\
{  (\tL\tL\tL\tO\tO\tO)^{j}\tL\tL\tL\tO\tO\tL\tL\tL(\tO\tO\tO\tL\tL\tL)^{j}\tO\tO\tL} &\mbox{if } \ell=3j+2 \mbox{  for some  } j\geq 0.
\end{cases}
$$
\end{proof}

In our application for the infinite family in Lemma~\ref{s(3x)=4}, we will fix the value of $s(n)$, which means that $\ell$ is small. Lemma~\ref{useful_2} then guarantees that the squares of such integers have (too) many $\tL$-bits, which will lead to a contradiction. We will make use of this procedure at several places in our investigation, in particular to check that there are no solutions in odd integers $n$ for the system $s(3n)=4$, $s(n)=9$ and $s(n^2)=5$. The sporadic solutions that are bounded in Lemma~\ref{s(3x)=4} can be checked directly by an exhaustive computer search.

\medskip

We next recall two recent results by Kaneko and Stoll~\cite{kanekostoll2021} that deal with products of integers with few binary digits.  

\begin{lem}\label{s(ab)=2}
Let $\ell,m\geq 2$, and let $a$ and $b$ be two odd integers such that $s(a)=\ell$ and $s(b)=m$. If $s(ab)=2$, then we have \begin{align*}
ab<2^{2\ell m-4}.
\end{align*}
\end{lem}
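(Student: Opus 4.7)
The plan is to reduce the statement to a concrete combinatorial question about binary expansions of products, and then proceed by induction on $\ell+m$.

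Since $a$ and $b$ are odd, so is $ab$, and combined with $s(ab)=2$ this forces $ab = 2^\alpha + 1$ for some integer $\alpha\geq 1$. The claim thus reduces to proving $\alpha \leq 2\ell m - 5$. Writing $a=\sum_{i=1}^\ell 2^{a_i}$ with $0=a_\ell<a_{\ell-1}<\cdots<a_1$ and $b=\sum_{j=1}^m 2^{b_j}$ with $0=b_m<\cdots<b_1$, expansion gives
\begin{equation*}
\sum_{i=1}^\ell\sum_{j=1}^m 2^{a_i+b_j}\;=\;2^\alpha+1,
\end{equation*}
so the multiset of $\ell m$ exponents $\{a_i+b_j\}$ must collapse, through binary carries, to exactly the two surviving positions $0$ and $\alpha$.

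First I would handle the base case $\ell=m=2$ directly: setting $a=2^p+1$ and $b=2^q+1$ with $p,q\geq 1$, the product is $2^{p+q}+2^p+2^q+1$, and forcing $2^{p+q}+2^p+2^q$ to be a single power of $2$ leaves (after splitting on whether $p=q$) only the trivial case $p=q=1$, giving $a=b=3$ and $ab=9<2^4=2^{2\cdot 2\cdot 2-4}$. For the inductive step, I would analyze both ends of the binary expansions simultaneously: the two lowest exponents $a_\ell=0,a_{\ell-1}$ and $b_m=0,b_{m-1}$ control the lowest non-trivial bits of $ab-1$, while $a_1,b_1$ govern its top. Reducing the identity modulo a suitable $2^{a_{\ell-1}+b_{m-1}}$ produces a rigid matching condition on the low-order contributions which, together with the analogous analysis at the top, either pins down a subproduct to which the induction hypothesis applies with a smaller value of $\ell+m$, or yields a direct contradiction with $s(ab)=2$.

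The main obstacle, as I see it, is the bookkeeping of carries. Several pairs $(i,j)$ may produce equal exponents $a_i+b_j$, and the resulting carries can cascade to positions that coincide with other contributions, repeating the process. Quantifying these cancellations tightly enough to yield the saving of order $\ell m$ (rather than the naive $\ell+m$ bound that one would get from pure length considerations $ab<2^{a_1+b_1+2}$) is the technically delicate step, and is precisely where the extremal argument of Kaneko--Stoll on the support of $a$ and $b$ must be invoked.
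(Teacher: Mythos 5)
There is a genuine gap. First, note that the paper does not prove this statement at all: it is quoted verbatim as a result of Kaneko and Stoll \cite{kanekostoll2021}, so the only ``proof'' in the paper is a citation, and any complete argument you give would necessarily be independent of the paper. Your reduction to $ab=2^{\alpha}+1$ and your base case $\ell=m=2$ are correct (indeed $a=b=3$ is the only solution there, and $9<2^{4}$). But the inductive step is not a proof; it is a description of what a proof would have to accomplish. You never exhibit the ``subproduct to which the induction hypothesis applies'': if you delete a bit from $a$ to form $a'$ with $s(a')=\ell-1$, there is no reason why $s(a'b)$ should still equal $2$, so the hypothesis of the lemma fails for the smaller instance and the induction has nothing to stand on. The ``rigid matching condition'' obtained by reducing modulo $2^{a_{\ell-1}+b_{m-1}}$ is asserted, not derived, and it is exactly at this point that carries from many coinciding exponents $a_i+b_j$ can cascade in ways your sketch does not control.

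More tellingly, you write that the quantitative saving of order $\ell m$ (as opposed to the trivial $ab<2^{a_1+b_1+2}\le 2^{O(\ell+m)}$-type reasoning, which in any case bounds the wrong quantity) ``is precisely where the extremal argument of Kaneko--Stoll on the support of $a$ and $b$ must be invoked.'' That extremal argument \emph{is} the content of the lemma; deferring the only hard step to the theorem you are trying to prove makes the argument circular. To repair this you would need to actually produce the combinatorial mechanism that forces $a_1+b_1\le 2\ell m-5$: roughly, one must show that if the top exponent $a_1+b_1$ is too large relative to $\ell m$, then somewhere in the expansion $\sum_{i,j}2^{a_i+b_j}$ there is a ``gap'' that no carry chain can bridge, leaving a third surviving $\tL$-bit in $ab$. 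None of that analysis is present in the proposal.
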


\begin{lem}\label{s(ab)=3}
Let $\ell,m\geq 2$, and $a$ and $b$ be two odd integers such that $s(a)=\ell,s(b)=m$ and $m\ell \geq 5$. If $s(ab)=3$, then we have \begin{align*}
ab<2^{4\ell m-13}.
\end{align*} 
\end{lem}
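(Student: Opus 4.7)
The plan is to extend the combinatorial carry analysis that underlies Lemma~\ref{s(ab)=2} to the three-bit case. Writing $a = \sum_{i=1}^\ell 2^{a_i}$ and $b = \sum_{j=1}^m 2^{b_j}$ with $a_1 > \cdots > a_\ell = 0$ and $b_1 > \cdots > b_m = 0$, we have
\[
ab \;=\; \sum_{i=1}^\ell \sum_{j=1}^m 2^{a_i + b_j}.
\]
Since $a$ and $b$ are odd, $ab$ is odd, so the hypothesis $s(ab) = 3$ forces $ab = 2^\gamma + 2^\delta + 1$ for some integers $\gamma > \delta \geq 1$, and the lemma reduces to showing $\gamma \leq 4\ell m - 14$.

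First I would localize the three ``clusters'' of cross-terms $2^{a_i + b_j}$ responsible for the three surviving $\tL$-bits of $ab$. The bottom bit $2^{0}$ comes solely from $(i,j) = (\ell, m)$, so every other low-exponent cross-term must either lie above some threshold or be annihilated via carries against another cross-term. Symmetrically, the top bit $2^{\gamma}$ is dominated by $2^{a_1 + b_1}$, up to a carry correction of at most $\lceil \log_2(\ell m) \rceil$ bits propagating from below. The delicate part is the middle bit $2^{\delta}$: for it to survive, the middle-range cross-terms must cancel against each other except for a single surviving contribution (possibly shifted by a carry), which forces rigid arithmetic relations among the gaps $a_i - a_{i+1}$ and $b_j - b_{j+1}$.

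The central step is to split the index set $\{1,\dots,\ell\} \times \{1,\dots,m\}$ into a ``high'' block and a ``low'' block at an appropriately chosen level, so that $ab$ decomposes as a sum of two integers each of binary weight at most $2$. Each block can be identified with a product of two integers having at most $\ell$ and $m$ one-bits respectively; Lemma~\ref{s(ab)=2} then applies to each block separately, each contributing an exponent bound of order $2\ell m$. Adding the two contributions and tracking constants carefully produces the claimed bound $4\ell m - 13$, where the extra saving of $-13$ (versus the naive $2 \cdot (-4) = -8$) comes from exploiting the forced alignment of the top bits of the two blocks.

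The main obstacle will be the case analysis for the middle cluster: one must rule out exotic carry configurations in which the bit at level $\delta$ arises from simultaneous cancellations involving many cross-terms rather than from a clean splitting into two $s(\cdot)=2$ pieces. The hypothesis $\ell m \geq 5$ is designed precisely to exclude the pathological case $\ell = m = 2$, where too few cross-terms are available and the argument degenerates; the remaining small configurations can be verified directly. Once the structural analysis of the middle region is pinned down, the factorization into two binary-weight-$2$ pieces and the appeal to Lemma~\ref{s(ab)=2} close the argument.
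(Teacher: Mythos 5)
First, a point of reference: the paper does not prove this lemma at all --- it is quoted verbatim from Kaneko and Stoll~\cite{kanekostoll2021}, so there is no internal proof to compare against. Your attempt must therefore stand on its own, and as written it does not: it is a plan with its hardest step explicitly deferred (``the main obstacle will be the case analysis for the middle cluster\dots once the structural analysis of the middle region is pinned down\dots close the argument''). That deferred step is precisely the content of the lemma, so nothing has actually been proved.

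Beyond being incomplete, the central reduction is not sound. You propose to split the index set $\{1,\dots,\ell\}\times\{1,\dots,m\}$ at some level so that $ab$ becomes a sum of two integers of binary weight at most $2$, and to identify each piece with a product of two few-bit integers so that Lemma~\ref{s(ab)=2} applies. But the set of pairs $(i,j)$ whose cross-terms $2^{a_i+b_j}$ feed a given range of bit positions of $ab$ is in general \emph{not} a Cartesian product $I\times J$, so neither piece is of the form $a'b'$ with $s(a')\le\ell$, $s(b')\le m$; moreover carries generated below any chosen threshold can propagate across it, so the two ``blocks'' are not independent and need not individually have small weight. Even granting such a decomposition, the bookkeeping is off: Lemma~\ref{s(ab)=2} bounds the \emph{size} of each sub-product, and the two pieces sit at overlapping or adjacent bit positions inside $ab$ rather than being multiplied, so ``adding the two contributions'' of order $2\ell'm'-4$ and $2\ell''m''-4$ does not produce a bound of the form $4\ell m-13$ on $ab$ itself; the claimed extra saving of $-13$ versus $-8$ is asserted, not derived. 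Finally, the role you assign to the hypothesis $\ell m\ge 5$ (excluding $\ell=m=2$) is a guess with no argument showing where $\ell=m=2$ actually breaks the bound. A correct proof has to control the carry structure globally (as in~\cite{kanekostoll2021}) rather than via a clean two-block factorization.
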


We will use these results when we look for solutions in the form $\mathsf{x}_1\tO\cdots\tO \mathsf{x}_0$ for a large inner block of $\tO$-bits. For such a structure, we have three separated contributions to the binary decomposition in the square: $x_1^2$, $x_0^2$ and the double product $x_1\cdot x_0$ which do not interfere since they are well-separated. When $s(x_1x_0)=2$ or $s(x_1x_0)=3$, we can apply these two lemmas to bound $x_1$ and $x_0$, and an exhaustive search will then be sufficient to conclude. We mention that the direct analogue to Lemma~\ref{s(ab)=2} and Lemma~\ref{s(ab)=3} for $s(ab)=4$ does not hold true (see~\cite{kanekostoll2021}).

\section{Proof of \Cref{main_theorem}}\label{sec-pr}

According to \Cref{factorization}, if there exist infinitely many odd solutions of~\eqref{problem} for some $k$, then almost all (i.e. all with a finite number of exceptions) of the binary expansions of these solutions can be factorized. Consider a factorization of $(n)_2$ as stated in \Cref{factorization} and note that none of the $2m+1$ summands in the set $\{y_{m,i}\}_{i=0}^m\cup \{y_{i,0}\}_{i=0}^m$ interfere with each other. Some of these summands may interfere with other summands, yet, even in that cases, each contributes with at least one $\tL$-bit to the binary expansion of $n^2$. Thus, if $m\geq k/2$, then $s(n^2)\geq 2m+1 >k$ and $n$ cannot be a solution of~\eqref{problem}. We therefore can safely suppose that $1\leq m<k/2$. 

We have the corresponding graphs that show the various possibilities of interference. Herein, vertices are the summands and the edges correspond to possible instances of interference between summands. 

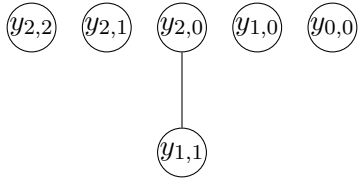
\begin{figure}[ht]
\begin{tikzpicture}
    \tikzstyle{every node}=[circle, draw, fill=white!50, inner sep=0pt, minimum width=4pt]
    \draw \foreach \x in{0,...,2}{
    (0-\x,1) node[above]{$y_{2,\x}$}
    };
    \draw \foreach \x in{0,...,1}{
    (2-\x,1) node[above]{$y_{\x,0}$}
    };
    \draw (0,0) node[below]{$y_{1,1}$};
    \draw (0,0) -- (0,1);
\end{tikzpicture}
\caption{Interference graph for $m=2$.}
\label{fig m=2}
\end{figure}

\begin{figure}[ht]
\begin{tikzpicture}
    \tikzstyle{every node}=[circle, draw, fill=white!50, inner sep=0pt, minimum width=4pt]
    \draw \foreach \x in{0,...,3}{
    (0-\x,1) node[above]{$y_{3,\x}$}
    };
    \draw \foreach \x in{0,...,2}{
    (3-\x,1) node[above]{$y_{\x,0}$}
    };
    \draw \foreach \x in{1,...,2} {
    (1-\x,0) node [below]{$y_{2,\x}$}
    };
    \draw (1,0) node [below]{$y_{1,1}$};
    \draw (0,0) -- (0,1);
    \draw (-1,0) -- (0,1);
    \draw (-1,0) -- (-1,1);
    \draw (1,0) -- (1,1);
    \draw (1,0) -- (0,1);
\end{tikzpicture}
\caption{Interference graph for $m=3$.}
\label{fig m=3}
\end{figure}

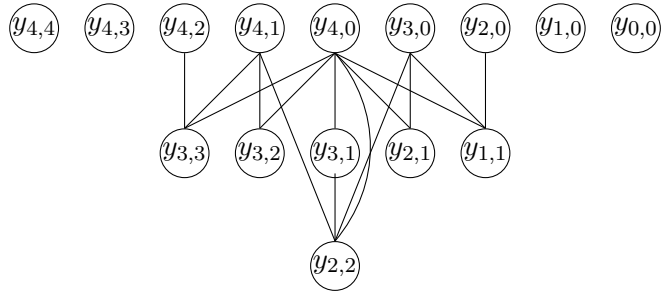
\begin{figure}[ht]
\begin{tikzpicture}
    \tikzstyle{every node}=[circle, draw, fill=white!50, inner sep=0pt, minimum width=4pt]
    \draw \foreach \x in{1,...,4}{
    (5-\x,2) node[above]{$y_{4,\x}$}
    };
    \draw \foreach \x in{0,...,4} {
    (9-\x,2) node [above]{$y_{\x,0}$}
    };
    \draw \foreach \x in{1,...,3}{
    (6-\x,1) node[below]{$y_{3,\x}$}
    };
    \draw \foreach \x in{1,...,2} {
    (8-\x,1) node [below]{$y_{\x,1}$}
    };
    \draw (5,-0.5) node[below]{$y_{2,2}$};
    \draw \foreach \x in{1,...,5}{
    (2+\x,1) --(2+\x,2)
    };
    \draw (3,1) -- (4,2);
    \draw (3,1) -- (5,2);
    \draw (4,1) -- (5,2);
    \draw (6,1) -- (5,2);
    \draw (7,1) -- (6,2);
    \draw (7,1) -- (5,2);
    \draw(5,-0.5) to[bend right=0] (6,2);
    \draw(5,-0.5) to[bend left=0] (4,2);
    \draw(5,-0.5) -- (5,0.4);
    \draw(5,-0.5) to[bend right=40] (5,2);
\end{tikzpicture}
\caption{Interference graph for $m=4$.}
\label{fig m=4}
\end{figure}

\newpage 

As a consequence of \Cref{factorization}, the following result implies \Cref{main_theorem}.

\begin{lem}\label{main_lemma} 
Let $k\in\{9,10,11\}$ and $n\geq N_k$ be a sufficiently large odd integer with $$s(n)=s(n^2)=k.$$ Then there exists no factorization of $(n)_2$ in the form ~(\ref{decompfactorization}).
\end{lem}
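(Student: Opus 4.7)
The plan is to proceed by a case analysis on $m$. The observation preceding the lemma already gives $1\leq m\leq (k-1)/2$, so $m\in\{1,2,3,4\}$ when $k\in\{9,10\}$ and $m\in\{1,\dots,5\}$ when $k=11$. Within each case the two guiding identities are
\begin{equation*}
s(n)=\sum_{i=0}^m s(x_i)=k, \qquad s(n^2)=\sum_{(i,j)\text{ outer}} s(y_{i,j})+c=k,
\end{equation*}
where the outer summands are the $2m+1$ non-interfering terms indexed by $i=m$ or $j=0$ shown in Figures \ref{fig m=2}--\ref{fig m=4}, and $c$ collects the net contribution to $n^2$ of the inner summands together with all carries induced by their interferences. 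Since each outer term already gives $s(y_{i,j})\geq 1$, the equation $s(n^2)=k$ leaves very little slack: the bit budgets of the $x_i$ and of the cross-products $x_i x_j$ are sharply constrained.

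The core reduction I would apply in every configuration is as follows. The bit accounting forces at least one cross-product $x_i x_j$ to have very small digit sum, typically $s(x_i x_j)\in\{2,3\}$. In that regime, Lemmas \ref{s(ab)=2} and \ref{s(ab)=3} yield explicit upper bounds on $x_i x_j$, which in turn bound each factor $x_i$; the finite set of surviving candidates is then enumerated by the algorithm \texttt{next} on the sets $\Delta_{s(x_i),s(x_i^2),m}$, and each is plugged back in to test $s(n^2)=k$. Separately, when the accounting forces $s(3x_i)\in\{3,4\}$, Lemmas \ref{useful_1}, \ref{s(3x)=3} and \ref{s(3x)=4} pin down the shape of $x_i$ up to a single integer parameter $\ell$; Lemma \ref{useful_2} then shows $s(x_i^2)\geq 7$ as soon as $\ell\geq 2$, which overflows the budget $k\leq 11$ and leaves only finitely many sporadic $x_i$ to verify directly.

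I would first dispatch $m=1$, which is essentially analytic: with $(n)_2=x_1\tO^{\ell_1}x_0$ and $\ell_1$ large, the three contributions $x_0^2$, $2x_0 x_1\cdot 2^{\hat{\ell_1}}$ and $x_1^2\cdot 2^{2\hat{\ell_1}}$ to $n^2$ are pairwise non-interfering, so I enumerate the finitely many partitions $k=s(x_0)+s(x_1)$ and apply the core reduction to each. For $m=2$ and $m=3$ the single interference of Figure \ref{fig m=2} and the three interferences of Figure \ref{fig m=3} introduce only a bounded number of carry scenarios (at most $k-(2m+1)$ extra bits may appear or be absorbed), each of which again reduces to a finite search.

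The hard part will be $m=4$, together with $m=5$ for $k=11$. The graph of Figure \ref{fig m=4} entangles the inner nodes $y_{1,1}, y_{2,1}, y_{2,2}$ with several outer nodes along overlapping edges, so the set of possible carry patterns is substantial and the bit budget is almost saturated. In this regime the analytic bounds from the Kaneko--Stoll lemmas must be carefully coupled with the algorithm \texttt{next}, whose role is to certify exhaustively that none of the residual configurations yields a solution for sufficiently large $n$. Once every pair $(k,m)$ has been dispatched, no admissible factorization survives, proving the lemma.
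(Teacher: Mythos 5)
Your outline correctly reproduces the paper's overall strategy: bound $m$ by $1\leq m<k/2$ using the $2m+1$ non-interfering outer summands, then for each $(k,m)$ run a bit-budget analysis that forces some cross-product $x_ix_j$ or some square $x_i^2$ to have very small digit sum, and close each branch with Lemmas~\ref{s(ab)=2}, \ref{s(ab)=3}, \ref{useful_1}--\ref{useful_2} and a finite enumeration via \texttt{next}. That is indeed the skeleton of the paper's argument. However, what you have written is a plan, not a proof: the entire mathematical content of this lemma lies in actually executing the case analysis, and your proposal defers all of it ("I would apply", "the hard part will be"). Beyond that, your described toolkit is genuinely insufficient in two places.

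First, in several branches the cross-product has digit sum $4$ or more, where no analogue of Lemmas~\ref{s(ab)=2}--\ref{s(ab)=3} exists (the paper notes explicitly that the bound fails for $s(ab)=4$). There your "core reduction" gives no bound at all, and the paper instead has to invoke Szalay's description of $E_3$, the emptiness of $E_{4,\lambda}$ for the relevant weights $\lambda$ (computed by \texttt{max-integer}, which you never mention), and ad hoc arguments such as $x_1^2\equiv 7\pmod 8$ being impossible. Second, and more seriously, in the tightest subcases of $m=4$ and $m=5$ (e.g.\ $k=10$, $x_3=x_1=1$, $x_2=\tL\tL\tL\tL\tL\tL$, and the $n_3=4$ subcase of $m=5$) every local bit budget is satisfied and a consistent carry pattern exists for suitably shifted summands, so no finite search over digit configurations can produce a contradiction. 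The paper closes these cases by a different mechanism: the required pairwise alignments of least significant bits impose a linear system on the positions $\hat{\ell_1},\dots,\hat{\ell_m}$, and one checks that the forced values make some summand (e.g.\ $y_{2,2}$ against $y_{3,1}$ and $y_{4,0}$) land in a position where its $\tO$-block cannot be cancelled. This global positional argument is absent from your proposal, and without it the claimed reduction "each of which again reduces to a finite search" is false for those configurations.
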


For $\boxed{k=9}$ and $\boxed{k=10}$ we have $1\leq m \leq 4$, and for $\boxed{k=11}$ we have $1\leq m \leq 5$. The proof of \Cref{main_lemma} is based on elementary considerations but we need to combine several ingredients to conclude: Szalay's result on $E_3$, Kaneko and Stoll's results on products of integers with $2$ or $3$ digits, non-existence of certain squares modulo powers of $2$, analysis of possible interference for multiple blocks, the tables of Hare, Laishram and Stoll~\cite{harelaishramstoll2011} etc. The investigation results in a finite case analysis where the details depend on the parameters. For the convenience of the reader, we have arranged the proof for fixed $m$ since the reasoning is very similar for $k\in\{9,10,11\}$ when the interference graph stays the same. We use freely our algorithms \texttt{next} and \texttt{max-integer}, whose descriptions are postponed to Section~\ref{sec-algos}. 

\subsection{The case $m=1$}\label{subsecm1}

We have $(n)_2=\mathsf{x}_1 \tO \cdots \tO \mathsf{x}_0$ with a (long) contiguous inner block of $\tO$-bits.  First, we observe that if $x_0=1$ (recall that this is the same as saying that $\mathsf{x}_0=\tL$), then $n$ cannot be a solution of~\eqref{problem} since then we would have $s(n^2)=s(x_1^2)+s(x_1)+1=s(x_1^2)+(k-1)+1 > k$. Similarly, we have $x_1 \neq 1$. By symmetry we can suppose that $s(x_1)\geq s(x_0)$. We therefore have to solve the following system:
\bigskip

\begin{equation}\label{systeme}
\left\{
    \begin{array}{lll}
        s(x_1)+s(x_0)=k,\\
        s(x_1^2)+s(x_1x_0)+s(x_0^2)=k, \\
        s(x_1),\; s(x_0),\; s(x_1^2),\; s(x_0^2),\; s(x_1\cdot x_0)\geq 2,\\
        s(x_1)\geq s(x_0).
    \end{array}
\right.
\end{equation}
\bigskip

We distinguish the cases according to the value of $s(x_1\cdot x_0) \in \{2,\ldots,k-4\}$. 
\medskip
\begin{enumerate}
\item \underline{$s(x_1\cdot x_0)=2$.} Here, \Cref{s(ab)=2} provides an upper bound for $x_1$ and $x_0$. Recall the definition of $\Delta_{\ell_1,\ell_2,m}$ given in~(\ref{deltadef}). The following table lists all sets to check for possible solutions $(x_1,x_0)$ of~(\ref{systeme}):
\bigskip

\begin{center}
\begin{tabular}{|c|c|c|c|c|} \hline 
Sets $\Delta$ for $k=9$ &  Sets $\Delta$ for $k=10$ & Sets $\Delta$ for $k=11$  \\ \hline\hline
$\Delta_{5,5,36} \times \Delta_{4,5,36}$ & $\Delta_{5,6,46} \times \Delta_{5,6,46}$ & $\Delta_{6,7,56} \times \Delta_{5,7,56}$ \\ \hline 
$\Delta_{6,5,36} \times \Delta_{3,5,36}$ & $\Delta_{6,6,46} \times \Delta_{4,6,46}$ & $\Delta_{7,7,56} \times \Delta_{4,7,56}$\\ \hline 
$\Delta_{7,5,36} \times \Delta_{2,5,36}$ & $\Delta_{7,6,46} \times \Delta_{3,6,46}$ & $\Delta_{8,7,56} \times \Delta_{3,7,56}$  \\ \hline 
- & $\Delta_{8,6,46} \times \Delta_{2,6,46}$ &  $\Delta_{9,7,56} \times \Delta_{2,7,56}$ \\ \hline
\end{tabular}
\end{center}

\bigskip

We construct these sets with our algorithm \texttt{next} in an efficient manner and check whether it gives a solution to the system~(\ref{systeme}); there is no solution. We could also use the tables of~\cite{harelaishramstoll2011} for the cases $\boxed{k=9}$ and $\boxed{k=10}$, however, for the case $\boxed{k=11}$, the tables of~\cite{harelaishramstoll2011} do not allow to conclude and we need a new method to construct the related sets.
\medskip

\item \underline{$s(x_1\cdot x_0)=3$.} With the help of \Cref{s(ab)=3} we can, similarly to before, restrict our attention to a finite number of sets. These sets are given in the following table: 

\bigskip

\begin{center}
\begin{tabular}{|c|c|c|c|c|} \hline 
Sets $\Delta$ for $k=9$ &  Sets $\Delta$ for $k=10$ & Sets $\Delta$ for $k=11$  \\ \hline\hline
$\Delta_{5,4,67} \times \Delta_{4,4,67}$ & $\Delta_{5,5,87} \times \Delta_{5,5,87}$ & $\Delta_{6,6,107} \times \Delta_{5,6,107}$\\ \hline 
$\Delta_{6,4,67} \times \Delta_{3,4,67}$ & $\Delta_{6,5,87} \times \Delta_{4,5,87}$ & $\Delta_{7,6,99} \times \Delta_{4,6,99}$\\ \hline 
$\Delta_{7,4,67} \times \Delta_{2,4,67}$ & $\Delta_{7,5,87} \times \Delta_{3,5,87}$ & $\Delta_{8,6,83} \times \Delta_{3,6,83}$ \\ \hline 
- & $\Delta_{8,5,87} \times \Delta_{2,5,87}$&  $\Delta_{9,6,59} \times \Delta_{2,6,59}$ \\ \hline
\end{tabular}
\end{center}

\bigskip

In the last column we have already reduced the number of cases to consider in order to speed up the calculations. Again, as before, there is no solution.

\medskip
 
\item \underline{$s(x_1\cdot x_0)=4$.} 
We here investigate the weight of the square parts since there is no universal bound on $x_1\cdot x_0$.
\begin{itemize}
    \item Let $\boxed{k=9}$. Then we have $\max(s(x_1^2),s(x_0^2))=3$ and $\min(s(x_1^2),s(x_0^2))=2$. Recall the definitions of $E_{k,\lambda}$ and $E_k$ in~(\ref{defElambdak}) and~(\ref{defEk}), and Szalay's result~(\ref{s(n2)=3}). We have $x_1\in E_3$ and $x_0=3$, or $x_0\in E_3$ and $x_1=3$. Then $s(x_1)+s(x_0)\leq 6<9$, which is a contradiction.
    \item Let $\boxed{k=10}$. \begin{enumerate}
\item If $s(x_1^2)=4$ then we have $s(x_0^2)=2$ and $x_0=3$. Thus $s(x_1)=8$ and there is no such $x_1$, see Table 3 in~\cite{harelaishramstoll2011}.

\item If $s(x_1^2)=3$ then $x_0,x_1 \in E_3$ and we have $s(x_1)+s(x_0)\leq 8 <10$.

\item If $s(x_1^2)=2$ then $s(x_1)=2$ and $s(x_0)=8$. We conclude as in case (a). 
\end{enumerate}
\item Let $\boxed{k=11}$. 
\begin{enumerate}
\item If $s(x_1^2)=5$ then we have $s(x_0^2)=2$ and $x_0=3$. Thus $x_1$ satisfies
\begin{equation}\label{hop}
  s(3x_1)=4,\qquad s(x_1)=9.  
\end{equation}
A machine calculation shows that there is no solution of~(\ref{hop}) with $x_1\leq 2^{17}$ that also satisfies $s(x_1^2)=5$. \Cref{s(3x)=4} now states that if there were a solution of~(\ref{hop}) with $x_1> 2^{17}$ then \begin{align*}
\mathsf{x}_1&\in\{\tL \tO^\alpha (\tL \tO)^6 \tL \tL:\; \alpha\geq 1\}\;\cup\; \{
(\tL \tO)^6 \tL \tL \tO^\alpha \tL:\; \alpha\geq 1\}\\
&\qquad \cup\; \{(\tL \tO)^{\ell_1} \tL \tL \tO^\alpha (\tL \tO)^{\ell_2}\tL \tL:\; \ell_1+\ell_2=4,\; \ell_1, \ell_2\geq 0,\; \alpha \geq 1\}.
\end{align*}
A direct computer search shows that none of the above forms satisfies $s(x_1^2)=5$ (note that for sufficiently large $\alpha$ the sum of digits of the above forms stabilizes since the blocks in the square do not interfere anymore, so this is a finite verification.) Alternatively, we could also check the solutions of~(\ref{hop}) via the algorithms in the Section~\ref{sec-45}.
 
\item If $s(x_1^2)=4$. We have $s(x_0^2)=3$ and $x_0\in E_3$. Thus $s(x_1) \in\{ 7,8,9\}$. Algorithm \texttt{max-integer} shows that $E_{4,\lambda}$ is empty for $\lambda\in\{7,8,9\}$ which allows to conclude.
\end{enumerate}
\end{itemize}
\medskip

\item \underline{$s(x_1\cdot x_0)=5$.} \begin{itemize}
    \item If $\boxed{k=9}$, then we have $s(x_1^2)=s(x_0^2)=2$ and $x_1=x_0=3$. Hence $s(x_1)+s(x_0)=4<9$.
    \item If $\boxed{k=10}$, then we have $\max(s(x_1^2),s(x_0^2))=3$ and $\min(s(x_1^2),s(x_0^2))=2$ and $s(x_1)+s(x_0)\leq 6<10$.
    \item Let $\boxed{k=11}$. \begin{enumerate}
\item If $s(x_1^2)=4$, then we have $s(x_0^2)=2$ and $x_0=3$. Thus $s(x_1)=9$ and we conclude since $E_{4,9}$ is empty.

\item If $s(x_1^2)=3$, then we have $x_0,x_1 \in E_3$ and we have $s(x_1)+s(x_0)\leq 8 <11$.
\end{enumerate}
\end{itemize} 

\medskip
\item \underline{$s(x_1\cdot x_0)=6$.} \begin{itemize}
    \item If $\boxed{k=10}$, then we have $s(x_1^2)=s(x_0^2)=2$ and $x_1=x_0=3$. Then $s(x_1)+s(x_0)=4<10$.
    \item If $\boxed{k=11}$, then we have $\max(s(x_1^2),s(x_0^2))=3$ and $\min(s(x_1^2),s(x_0^2))=2$ and again $s(x_1)+s(x_0)\leq 6<11$.
\end{itemize} 
\medskip

\item \underline{$s(x_1\cdot x_0)=7$.} Here, necessarily $\boxed{k=11}$, thus we have $s(x_1^2)=s(x_0^2)=2$ and $x_1=x_0=3$. Then $s(x_1)+s(x_0)=4<11$.
\end{enumerate}
\bigskip

The proof of \Cref{main_lemma} is therefore complete for the case $m=1$. 

\subsection{The case $m=2$}

Here, we change our strategy and use the interference graph given in \Cref{fig m=2}. There are five independents summands $y_{2,2}, y_{2,1}, y_{2,0}, y_{1,0},y_{0,0}$ each contributing to $(n)_2$ with at least one $\tL$-bit and only $y_{2,0}$ may interfere with $y_{1,1}$. None of these five summands can contribute with more than $(k-4)$ $\tL$-bits since $s(n^2)=k$. 

We distinguish the following cases.

\begin{enumerate}
\item $\underline{x_2=1.}$ 
    \begin{itemize}
    \item If $\boxed{k=9}$ then $s(y_{2,1})=s(x_1)$ and $x_1$ has at most five $\tL$-bits. Hence $x_0\neq 1$ and $s(y_{1,0})\geq 2$. Therefore,
    $$1+s(x_1)+s(x_0)=9\geq  1+s(x_1)+1+2+s(x_0^2).$$
    Thus $s(x_0^2)\leq s(x_0)-3$. This condition has no solution when $2\leq s(x_0)\leq 7$, see~\cite{harelaishramstoll2011}.
    \item If $\boxed{k=10}$ then $s(y_{2,1})=s(x_1)$ and $x_1$ has at most six $\tL$-bits. Hence $x_0\neq 1$ and $s(y_{1,0})\geq 2$. As before, we get $s(x_0^2)\leq s(x_0)-3$. The only solution to this is $\mathsf{x}_0=\tL\tL\tL\tO\tL\tL\tL\tL\tL$ and $x_1=1$, see~\cite{harelaishramstoll2011}, Table 3. But now $y_{1,0}=x_0$ has $8>6$ $\tL$-bits, which is too many.
    \item If $\boxed{k=11}$ then $s(y_{2,1})=s(x_1)$ and $x_1$ has at most seven $\tL$-bits, thus $x_0\neq 1$ and $s(y_{1,0}) \geq 2$. Again, $2\leq s(x_0^2)\leq s(x_0)-3$ and the only solution for $s(x_0)\leq 8$ is $\mathsf{x}_0=\tL\tL\tL\tO\tL\tL\tL\tL\tL$. We get $\mathsf{x}_1 = \tL\tO^{\alpha}\tL$ for some $\alpha \geq 0$. But this implies $s(y_{1,0}) \geq 7$ and  this contribution is too large to $s(n^2)=11$ to hold since $s(y_{0,0})=5$. In fact, $s(y_{1,0})$ is constant for $\alpha\geq 9$ so we can check the values for $0\leq \alpha\leq 9$ directly. If there were any such solution for $s(x_0)=9$, then $x_1=1$ and so $s(y_{1,0})=9>11-4$. Again, this contradicts with $s(n^2) = 11$.
    \end{itemize}
    \medskip
\item $\underline{x_0=1}.$ This is symmetric to the first case. 
    \medskip
\item \underline{$x_2\neq 1$ and $x_0 \neq 1$.} The four summands $y_{2,2}, y_{2,1}, y_{1,0},y_{0,0}$ contain more than one $\tL$-bit. Note also that $x_1\ne 1$. In fact, if $x_1=1$, then $s(n)=s(x_2)+1+s(x_0)\leq s(y_{2,1})+s(y_{2,0})+s(y_{1,0})<s(n^2)$. \begin{itemize}
    \item If $\boxed{k=9}$, then all these summands contain two $\tL$-bits and therefore $x_2=x_0=3$. The fact that $s(x_1)=5$ and $s(x_2x_1)=2$, together with Lemma~\ref{useful_1}, implies that $\mathsf{x}_1=\tL\tO\tL\tO\tL\tO\tL\tL$. Then the summand $\mathsf{y}_{1,1}=\tL\tL\tL\tO\tO\tL\tO\tO\tO\tL\tL\tL\tO\tO\tL$ contributes with more than one $\tL$-bit to $(n^2)_2$. Indeed if we shift $\mathsf{y}_{1,1}=\tL\tL\tL\tO\tO\tL\tO\tO\tO\tL\tL\tL\tO\tO\tL$ against $\mathsf{y}_{2,0}=\tL\tO\tO\tL$ and add the terms, the result always has more than one $\tL$-bit (this can be checked by a simple \texttt{for}-loop, see the discussion in Section~\ref{sec-aux}).
    \item If $\boxed{k=10}$, then at least three of the summands contain exactly two $\tL$-bits. By symmetry, we may assume that $y_{2,2}$ and $y_{2,1}$ contain exactly two $\tL$-bits. This implies $x_2=3$ and $\mathsf{x}_1=(\tL\tO)^{\alpha}\tL\tL$ for some $\alpha\geq 0$, depending on $x_0$. \begin{enumerate}
\item If $s(y_{0,0})=2$ then $x_0=3$ and $\mathsf{x}_1=\tL\tO\tL\tO\tL\tO\tL\tO\tL\tL$. Thus we have  $\mathsf{y}_{1,1}=\tL\tL\tL\tO\tO\tO\tL\tL\tL\tL\tO\tO\tO\tL\tL\tL\tO\tO\tL$ which may interfere with $\mathsf{y}_{2,0}=\tL\tO\tO\tL$. This interference contributes with more than two $\tL$-bits. 
\item If $s(y_{0,0})=3$ then the possible choices of $\mathsf{x}_0$ are $\tL\tO\tL\tL\tL$, $\tL\tL\tL$ and $\tL\tO^{\beta}\tL$ for some $\beta\geq 1$, see Szalay's result~(\ref{s(n2)=3}). Therefore, $4\leq s(x_1)\leq 6$ and we can easily check that in all cases $y_{1,0}$ has more than two $\tL$-bits. 
\end{enumerate}

\item If $\boxed{k=11}$, the four summands $y_{2,2}, y_{2,1}, y_{1,0}, y_{0,0}$ contain more than one $\tL$-bit, thus at least two of them contain exactly two $\tL$-bits. We now distinguish the cases regarding the number of summands with two $\tL$-bits.

\smallskip First, suppose that there are at least three terms among $y_{2,2}, y_{2,1}, y_{1,0}, y_{0,0}$ that contain exactly two $\tL$-bits. By symmetry we can assume that $y_{2,2}$ and $y_{2,1}$ contain exactly two $\tL$-bits. This implies $\mathsf{x}_2=\tL\tL$ and $\mathsf{x}_1=(\tL\tO)^{\alpha}\tL\tL$ for some $\alpha \geq 0$. \begin{enumerate}
\item If $s(y_{0,0}) = 2$ then $\mathsf{x}_0=\tL\tL$, $\mathsf{x}_1=(\tL\tO)^{5}\tL\tL$ and $$\mathsf{y}_{1,1} = \tL\tL\tL\tO\tO\tO\tL\tL\tL\tO\tO\tL\tL\tL\tO\tO\tO\tL\tL\tL\tO\tO\tL$$ which may interfere with $\mathsf{y}_{2,0} = \tL\tO\tO\tL$. This interference has more than three $\tL$-bits.
\item If $s(y_{0,0}) = 3$ then the possible choices for $\mathsf{x}_0$ are $\tL\tO\tL\tL\tL$, $\tL\tL\tL$, and $\tL\tO^{\beta}\tL$ for some $\beta \geq 1$. Therefore, $s(x_1)\geq 5$ and in all cases $y_{1,0}$ has more than two $\tL$-bits.
\item If $s(y_{0,0}) = 4$ then, since $s(x_0)\leq 7$, the possible choices for $\mathsf{x}_0$ are $\tL\tO\tL\tL\tL\tL$, $ \tL\tL\tO\tL\tL\tL\tL$, $\tL\tL\tL\tL$, and $\tL\tL\tO\tL$ (see Table 1 and Table 3 in~\cite{harelaishramstoll2011}). Therefore, $s(x_1) \geq 3$ and in all cases $y_{1,0}$ has more than two $\tL$-bits.
\end{enumerate}

\smallskip 

Suppose now that there are exactly two among $y_{2,2}, y_{2,1}, y_{1,0}, y_{0,0}$ that contain two $\tL$-bits. It is then sufficient to discuss the following cases: \begin{enumerate}
\item  $y_{2,2}$ and $y_{1,0}$ (or analogously $y_{2,1}$ and $y_{0,0}$) contain exactly two $\tL$-bits. Then $\mathsf{x}_2=\tL\tL$ and $s(y_{0,0})=3$ so that $x_0 \in E_3$. Hence, $5\leq s(x_1) \leq 7$. We now consider $y_{2,1}=3x_1$ and $s(3x_1)=3$. 



By Lemma~\ref{s(3x)=3}, the only solutions for $\mathsf{x}_1$ are $\tL(\tO \tL)^{\ell_1}(\tL \tO)^{\ell_2} \tL \tL$ for some $\ell_1,\ell_2 \geq 0$ such that $2 \leq \ell_1+\ell_2 \leq 4$. 

Therefore, all the possible values for $\mathsf{y}_{2,0}$ and $\mathsf{y}_{1,1}$ are:

$$\begin{tabular}{l|l}
     $\mathsf{y}_{2,0}$ & $\mathsf{y}_{1,1}$ \\ \hline \hline 
     $\tL\tO\tL\tO\tL$ & $\tL\tL\tL\tO\tL\tL\tO\tO\tL\tO\tO\tO\tL$  \\ \hline
     $\tL\tO\tO\tO\tL\tO\tL$ & $\tL\tO\tO\tO\tO\tO\tO\tL\tO\tL\tL\tO\tO\tL$ \\\hline
     $\tL\tL\tO^{t-2}\tL\tL$, $t \geq 2$ & $\tL\tO\tL\tL\tO\tO\tL\tO\tL\tL\tL\tO\tO\tL$ \\ \hline 
     & $\tL\tL\tL\tO\tO\tL\tO\tL\tL\tL\tO\tO\tL\tO\tO\tO\tL$ \\ \hline 
     & $\tL\tL\tL\tO\tL\tO\tL\tL\tO\tO\tL\tO\tL\tL\tO\tO\tL$ \\ \hline 
     & $\tL\tO\tO\tO\tO\tO\tO\tO\tL\tO\tL\tO\tL\tL\tL\tO\tO\tL$ \\ \hline 
     & $\tL\tO\tO\tO\tO\tO\tO\tO\tL\tO\tL\tO\tL\tL\tL\tO\tO\tL$ \\ \hline 
     & $\tL\tL\tL\tO\tL\tO\tL\tO\tL\tL\tO\tL\tO\tL\tO\tL\tL\tL\tO\tO\tL$ \\ \hline 
     & $\tL\tL\tL\tO\tL\tO\tL\tO\tL\tL\tO\tL\tO\tL\tO\tL\tL\tL\tO\tO\tL$ \\ \hline 
     & $\tL\tL\tL\tO\tL\tO\tL\tO\tL\tL\tO\tL\tO\tL\tO\tL\tL\tL\tO\tO\tL$ \\ \hline 
     & $\tL\tO\tO\tO\tO\tO\tO\tO\tL\tO\tO\tO\tO\tO\tO\tO\tL\tL\tL\tO\tO\tL$ \\ \hline 
     & $\tL\tO\tO\tO\tO\tO\tO\tO\tL\tO\tO\tO\tO\tO\tO\tO\tL\tL\tL\tO\tO\tL$ \\ \hline 
\end{tabular}$$
\medskip

Therefore, in each of the above possibilities the terms $y_{1,1}$ and $y_{2,0}$ will contribute with more than one $\tL$-bit.


\item $y_{2,2}$ and $y_{0,0}$ contain exactly two $\tL$-bits. Then $\mathsf{x}_2=\mathsf{x}_0 = \tL\tL$ so $s(x_1) = 7$. Thus the contribution of interference of $y_{2,0}$ with $y_{1,1}$ is only one $\tL$-bit if and only if $(x_1^2)_2$ is of the form $\tL\cdots\tL\tO\tL\tL\tL$. This implies that $x_1^2 \equiv 7 \pmod 8$, which is not possible for any odd integer $x_1$.

\item $y_{2,1}$ and $y_{1,0}$ contain exactly two $\tL$-bits. Then $y_{2,2}$ and $y_{0,0}$ contain exactly three $\tL$-bits thus $x_0,x_2 \in E_3$. The summand $y_{2,0}$ which might interfere with $y_{1,1}$ has to be one $\tL$-bit.

The following forms for $y_{2,0}$ contradict the fact that $y_{1,1} \equiv 1 \pmod 8$: $$\begin{tabular}{l|l|l}
     $\mathsf{y}_{2,0}$ & $(x_2,x_0)$ & Requested form for $\mathsf{y}_{1,1}$  \\ \hline \hline
     $\tL\tL\tO\tO\tO\tL$ &  (7,7) & $(*)\tL\tL$ \\ \hline
     $\tL\tO\tO\tO\tO\tL\tO\tO\tO\tL$ & (23,23) & $(*)\tL\tL$  \\ \hline 
     $\tL\tO\tL\tO\tO\tO\tO\tL$ & (7,23) & $(*)\tL\tL$  \\ \hline
     $\tL\tO\tO\tO\tL$ &  $(7,2^2+1)$ & $(*)\tL\tL$ \\ \hline
     $\tL\tL\tL\tO\tO\tL\tL$ &  $(23,2^2+1)$ & $(*)\tL\tO\tL$ \\ \hline
     - &$(2^{t_2}+1,2^{t_0}+1),t_2,t_0\geq 2$ & $(*)\tL\tL$ \\
\end{tabular}$$

The remaining forms are more complicated because the same argument does not work. 

$$\begin{tabular}{l|l|l}
     $\mathsf{y}_{2,0}$ & $(x_2,x_0)$  & Requested form for $\mathsf{y}_{1,1}$ \\ \hline \hline
     $\tL\tL\tO\tO\tL\tL\tL\tL$ &  $(23,2^3+1)$ & $\tL\cdots\tL\tO\tO\tL\tL\tO\tO\tO\tL$  \\ \hline
     $\tL\tL\tO\tO\tO\tO\tL\tL\tL$ &  $(23,2^4+1)$ & $\tL\cdots\tL\tO\tO\tL\tL\tL\tL\tO\tO\tL$  \\ \hline 
     $\tL\tL\tL\tO^{t-3}\tL\tL\tL$ & $(7,2^t+1), t\geq 3$ & $\tL\cdots\tL\tO\tO\tO\tL^{t-3}\tO\tO\tL$ \\ \hline
     $\tL\tO\tL\tL\tL\tO^{t-5}\tL\tO\tL\tL\tL$ & $(23,2^t+1), t\geq 5$ &  $\tL\cdots\tL\tO\tL\tO\tO\tO\tL^{t-5}\tO\tL\tO\tO\tL$  \\ 
\end{tabular}$$

The first case implies that $x_1^2=2^8(2^{\lambda}-1)+49$ for some $\lambda \geq 1$, since $s(x_1)=5$ gives that $x_1^2=49$ is not possible. However, $\lambda$ is bounded since $s(x_1^2)\leq s(x_1)(s(x_1)+1)/2$, so $\lambda\leq 12$. Thus it is sufficient to check if the integers $2^8(2^{\lambda}-1)+49$, for $0 \leq \lambda \leq 12$ are perfect square, and it is not the case.

The second case is similar since we have $x_1^2=2^8(2^{\lambda}-1)+111$ for some $\lambda \geq 1$ and $s(x_1)=4$. We conclude in the same way as before. 

As for the third case, we have $x_1^2=2^{t+4}(2^{\lambda}-1)+2^3(2^{t-3}-1)+1$ for some $\lambda \geq 1$ and $s(x_1)=6$. Again, $t$ is bounded and we find the only solution $x_1=31$ for $t=3$. However, $s(y_{2,1})=s(7 \times 31)=5$ contradicts our first hypothesis in this case. The last case works in a same manner and we are done. 
\end{enumerate}
\end{itemize} 
\end{enumerate}

Therefore, the proof of \Cref{main_lemma} is complete for $m=2$. 

\subsection{The case $m=3$}

There are seven independent summands $y_{3,3},\ldots,y_{3,0},\ldots,y_{0,0}$ each contributing to $(n^2)_2$ with at least one $\tL$-bit, see \Cref{fig m=3}. The considerations for $k=9,10,11$ are slightly different here.

\medskip 

If $\boxed{k=9}$ then at least five of the seven independents summands have to contain only one $\tL$-bit.  

\begin{enumerate}
\item \underline{$x_3=x_0=1$.} We have $y_{3,2} =x_2$ and $y_{1,0} =x_1$. Therefore , $s(n^2)> s(y_{3,3})+s(y_{3,2})+s(y_{1,0})+s(y_{0,0})=s(x_3)+s(x_2)+s(x_1)+s(x_0)= 9$, a contradiction.

\medskip
\item \underline{$x_3 \neq 1$.} The summands $y_{3,3}$ and $y_{3,2}$ contain two $\tL$-bits while all the other summands only contribute with one $\tL$-bit. Thus, $x_3= 3$,  $x_1=x_0=1$, and $s(x_2) = 5$. Lemma~\ref{useful_1} shows that the only solution to $s(y_{3,2}) = 2$ is $\mathsf{x}_2=\tL\tO\tL\tO\tL\tO\tL\tL$.  However, the summand $\mathsf{y}_{2,2}=\tL\tL\tL\tO\tO\tL\tO\tO\tO\tL\tL\tL\tO\tO\tL$ can only interfere with either $\mathsf{y}_{3,1}=\tL\tL$ or $\mathsf{y}_{3,0}=\tL\tL$ and in both cases its contribution to $n^2$ is larger than one $\tL$-bit.

\medskip

\item  \underline{$x_0 \neq  1$.} This is symmetric to the previous case.

\end{enumerate}

\medskip

If $\boxed{k=10}$ then, among the seven independent summands $y_{3,3},\ldots,y_{3,0},\ldots,y_{0,0}$, at least four of them must contain exactly one $\tL$-bit and none of the summands can contribute with more than three $\tL$-bits. In fact, if there were a summand with more than three $\tL$-bits then all other six summands have to contribute with a single $\tL$-bit which is impossible. We now distinguish several cases.
\begin{enumerate}
\item \underline{$x_3=x_0=1$.} We have $y_{3,2}=x_2$ and $y_{1,0}=x_1$, therefore, $s(n^2)>s(y_{3,3})+s(y_{3,2})+s(y_{1,0})+s(y_{0,0})=10$, a contradiction.
\item \underline{$s(y_{3,3})=2$.} This implies $\mathsf{x}_3=\tL\tL$: the summand $y_{3,2}$ contains at least two $\tL$-bits, thus, one of $\mathsf{y}_{1,0}$ and $\mathsf{y}_{0,0}$ is $\tL$ which implies that $x_0=1$. \begin{enumerate}
\item \underline{$s(y_{3,2})=2$ and $s(y_{1,0})=2$.} As $y_{1,0}=x_1$ and $s(x_1)=2$, we obtain $\mathsf{x}_2=(\tL\tO)^{3}\tL\tL$ and $\mathsf{y}_{2,2}=\tL\tL\tL\tO\tO\tL\tO\tO\tO\tL\tL\tL\tO\tO\tL$. This summand may interfere with one of 
$$\mathsf{y}_{3,1}\in\{\tL \tL \tO^\gamma \tL \tL: \gamma \geq 0\}\cup \{\tL \tO\tO \tL\}$$ and $\mathsf{y}_{3,0}=\tL \tL$, in both cases its contribution will be more than one $\tL$-bit. Note that for $y_{3,1}$ we can use a \texttt{for}-loop over $\gamma$ to conclude.
\item \underline{$s(y_{3,2})=2$ and $s(y_{1,0})=1$.} We have $x_1=1$, $\mathsf{x}_2=(\tL\tO)^{4}\tL\tL$ and  $$\mathsf{y}_{2,2}=\tL\tL\tL\tO\tO\tO\tL\tL\tL\tL\tO\tO\tO\tL\tL\tL\tO\tO\tL.$$ This summand may interfere with one of $y_{3,1}=x_3=3$ and $y_{3,0}=x_3=3$, but its contribution will be more than two $\tL$-bits.
\item \underline{$s(y_{3,2})=3$.} Here again $x_1=1$ and the summand $y_{2,1}=x_2$ can only interfere with $\mathsf{y}_{3,0}=\tL\tL$ and they must add up to a power of 2. According to Lemma~\ref{s(3x)=3}, the system $s(3 x_2)=3$, $s(x_2)=6$ implies that \[ \mathsf{x}_2 \in \{\tL\tO\tL\tO\tL\tO\tL\tL\tL, \tL\tO\tL\tL\tO\tL\tO\tL\tL,\tL\tO\tL\tO\tL\tL\tO\tL\tL,\tL\tL\tO\tL\tO\tL\tO\tL\tL\}. \]

In any case, the interference between $y_{2,1}$ and $y_{3,0}=3$ would then again contribute with too many $\tL$-bits.
\end{enumerate}
\item \underline{$s(y_{3,3})=3$.} We have $x_3\in E_3$ and $s(y_{3,2})\geq 2$. Furthermore, we have $x_1=x_0=1$ because $y_{1,0}=1$. In any case $4\leq s(x_2)\leq 6$ the summand $y_{2,0}=x_2$ has to interfere with $y_{1,1}=1$ and add up to a power of $2$. Thus, $\mathsf{x}_2 =\tL^j$ for $4\leq j\leq 6$ and $\mathsf{y}_{2,2}=\tL^{j-1}\tO^j\tL$. This summand can only interfere with one of $y_{3,1}=x_3$ and $y_{3,0}=x_3$ and we see that the contribution of this summand is then again more than one $\tL$-bit in all cases.
\item \underline{$s(y_{0,0})\geq 2$}. These cases are symmetric to the previous two cases.
\end{enumerate}

\medskip

If $\boxed{k=11}$ then at least three of the independents summands have to contain one $\tL$-bit only. We distinguish between the cases: \begin{enumerate}
\item \underline{$x_3=x_0=1$.} We have $y_{3,2} =x_2$ and $y_{1,0} =x_1$, therefore, $s(n^2)>s(y_{3,3})+s(y_{3,2})+s(y_{1,0})+s(y_{0,0})=11$, again a contradiction.
\item \underline{$s(y_{3,3})=4$.} Since $s(x_3)\leq 8$ the possible values for $\mathsf{x}_3$ are $\tL\tO\tL\tL\tL\tL$, $\tL\tL\tO\tL\tL\tL\tL$, $\tL\tL\tL\tL$ and $\tL\tL\tO\tL$  (see~\cite{harelaishramstoll2011}). Also, since in this case $s(y_{3,2})\geq 2$, we have $x_1=x_0=1$. This implies $3\leq s(x_2)\leq 6$. The summand $y_{2,0}=x_2$ has to interfere with $y_{1,1}=1$ and the contribution has exactly one $\tL$-bit. Hence, $\mathsf{x}_2=\tL^{i}$ for $3\leq i \leq 6$ and $\mathsf{y}_{2,2} =\tL^{i-1}\tO ^{i}\tL$. This summand can only interfere with one of $y_{3,1}=x_3$ and $y_{3,0}=x_3$ and we can see that the contribution of this summand is more than one $\tL$-bit in each case.
\item \underline{$s(y_{3,3})=3$.} Possible forms of $\mathsf{x}_3$ are $\tL\tO\tL\tL\tL$, $\tL\tL\tL$ and $\tL\tO^{i}\tL$ for some $i \geq 1$. Furthermore, $x_0=1$ since one of $y_{0,0}$ and $y_{1,0}$ is $1$. \begin{enumerate}
\item If $s(x_1)=2$ then in any case $4\leq s(x_2)\leq 6$. The summand $y_{2,0} =x_2$ has to interfere with $y_{1,1}$ and contributes with one $\tL$-bit. Since $\mathsf{x}_1=\tL\tO^{j}\tL$ for some $j \geq 0$, we have $\mathsf{y}_{1,1}$ equals to $\tL\tO\tO\tL$ or $\tL\tO^{j-1}\tL\tO^{j+1}\tL$ for some $ j\geq 1$. A computation regarding the possible interference of the summands shows that we have one of the following cases: 
\bigskip

\begin{center}
\begin{tabular}{c|c}
$s(x_2)$ & $\mathsf{x}_2$  \\ \hline\hline
4 &  $\{\tL\tO\tL\tL\tL, \tL\tO\tO\tL\tL\tL\}$  \\  \hline
5 & $\{\tL\tL\tO\tL\tL\tL,\tL\tL\tO\tO\tL\tL\tL,\tL\tO\tL\tL\tL\tL\}$  \\\hline 
6 & $\{\tL\tL\tL\tO\tL\tL\tL, \tL\tL\tL\tO\tO\tL\tL\tL,\tL\tO\tL\tO\tL\tL\tL\tL\}$  \\ 
\end{tabular}
\end{center}


\bigskip

By computing every value of $y_{2,2}$ we finally have to check the possible interference of $y_{2,2}$ with $y_{3,1}$ or $y_{3,0}=x_3$. In each case the result has more than one $\tL$-bit and this leads to a contradiction.
\item If $x_1=1$ then $5\leq s(x_2)\leq 7$. First, suppose that the summand $y_{2,0}=x_2$ interferes with $y_{1,1}=1$ and adds up to a power of $2$. Then $\mathsf{x}_2=\tL^{j}$ for $5\leq j\leq 7$ and $s(y_{3,2})\geq 3$. The term $\mathsf{y}_{2,2} = \tL^{j-1}\tO^{j}\tL$ can only interfere with one of $y_{3,1}=x_3$ and $y_{3,0}=x_3$ and the contribution is again more than one $\tL$-bit.  If $y_{2,0}=x_2$ interferes with $y_{1,1}=1$ and gives a contribution of the form $\tL\tO^{j}\tL$, then $\mathsf{x}_2$ is of the form $\tL\tL\tL\tL\tO^{j-4}\tL$,  $\tL\tL\tL\tL\tL\tO^{j-5}\tL$ or $\tL\tL\tL\tL\tL\tL\tO^{j-6}\tL$. We compute $y_{2,2}$ in all theses cases and conclude that this summand contributes with more than one $\tL$-bit when it interacts with $x_3$. 
\end{enumerate} 
\item \underline{$s(y_{3,3})=2$.} We have $\mathsf{x}_3=\tL\tL$. 

\smallskip 

First, we assume that $x_0=1$. We have the following cases: 
\bigskip

\begin{center}
\begin{tabular}{c|c|c|c}
$s(y_{3,2})$ & $s(y_{1,0})$ & $\mathsf{x}_2$ & $\mathsf{y}_{2,2}$  \\ \hline \hline
2 & 3 & $(\tL\tO)^{3}\tL\tL$ & $\tL\tL\tL\tO\tO\tL\tO\tO\tO\tL\tL\tL\tO\tO\tL$ \\ \hline
2 & 2 & $(\tL\tO)^{4}\tL\tL$ & $\tL\tL\tL\tO\tO\tO\tL\tL\tL\tL\tO\tO\tO\tL\tL\tL\tO\tO\tL$ \\ \hline
2 & 1 & $(\tL\tO)^{5}\tL\tL$ & $\tL\tL\tL\tO\tO\tO\tL\tL\tL\tO\tO\tL\tL\tL\tO\tO\tO\tL\tL\tL\tO\tO\tL$ \\ \hline 
3 & 2 & $(\tL\tO)^{4}\tL\tL$ & $\tL\tL\tL\tO\tO\tO\tL\tL\tL\tL\tO\tO\tO\tL\tL\tL\tO\tO\tL$ \\ \hline 
3 & 1 & $(\tL\tO)^{5}\tL\tL$ & $\tL\tL\tL\tO\tO\tO\tL\tL\tL\tO\tO\tL\tL\tL\tO\tO\tO\tL\tL\tL\tO\tO\tL$ \\ \hline 
4 & 1 & $(\tL\tO)^{5}\tL\tL$& $\tL\tL\tL\tO\tO\tO\tL\tL\tL\tO\tO\tL\tL\tL\tO\tO\tO\tL\tL\tL\tO\tO\tL$ \\ 
\end{tabular}
\end{center}

\bigskip

In each case, $y_{2,2}$ may interfere with one of $y_{3,1}=x_3\cdot x_1$ and $\mathsf{y}_{3,0}=\tL\tL$. In both cases; this interference is always more than one $\tL$-bit due to the submultiplicativity property of the sum of digits function. Indeed, this fact ensure that $s(x_3\cdot x_1)\leq s(x_3)\cdot s(x_1) \leq 6$ and it is not sufficient to cancel all the inner $\tL$-bits in $y_{2,2}$. 

\smallskip

Secondly, assume that $x_0 \neq 1$. Then, $s(y_{0,0})=s(y_{1,0})=s(y_{3,2})=2$ and so $\mathsf{x}_0= \tL\tL$, $\mathsf{x}_1=(\tL\tO)^{i}\tL\tL$ and $\mathsf{x}_2=(\tL\tO)^{j}\tL\tL$ for some positive integers $i$ and $j$ such that $i+j=3$. (The case $x_1=1$ and/or $x_2=1$ is easier and can be treated in a similar fashion.) By symmetry we may suppose $i<j$. 
\begin{enumerate}
\item If $j=3$, then $\mathsf{x}_2=\tL\tO\tL\tO\tL\tO\tL\tL$ and $\mathsf{x}_1=\tL\tL$, and we have  $\mathsf{y}_{2,2}=\tL\tL\tL\tO\tO\tL\tO\tO\tO\tL\tL\tL\tO\tO\tL$. This summand may interfere with one of $\mathsf{y}_{3,1}=\tL\tO\tO\tL$ and $\mathsf{y}_{3,0}=\tL\tO\tO\tL$, but this contribution is always more than one $\tL$-bit. 
\item If $j=2$, then $\mathsf{x}_2=\tL\tO\tL\tO\tL\tL$ and $\mathsf{x}_1=\tL\tO\tL\tL$. Then we have  $\mathsf{y}_{2,2}=\tL\tL\tL\tO\tO\tL\tL\tL\tO\tO\tL$. This summand may interfere with one of $\mathsf{y}_{3,1}=\tL\tO\tO\tO\tO\tL$ and $\mathsf{y}_{3,0} = \tL\tO\tO\tL$, but this contribution is always more than one $\tL$-bit.
\end{enumerate}
\end{enumerate}

Therefore, the proof of \Cref{main_lemma} is complete for $m=3$. 

\subsection{The case $m=4$}

There are nine independent summands $y_{4,4},\ldots,y_{4,0},\ldots,y_{0,0}$ each contributing to $n^2$ with at least one $\tL$-bit and at most $(k-8)$ $\tL$-bits, see \Cref{fig m=4}. While there will be more restrictions compared to the previous cases due to the fact that there are more independent terms, the downside is that we have now three levels of interaction in the interference graph.

\bigskip

Let $\boxed{k=9}$. This is the easiest case since it implies that all of these summands contribute to $n^2$ with exactly one $\tL$-bit. We have $y_{4,4}=y_{4,3}=y_{1,0}=y_{0,0}=1$ it implies that $x_4 =x_3 =x_1 =x_0 =1$ and $s(x_2)=5$. The summand $y_{4,2}=x_2$ can only interfere with $y_{3,3}=1$ and the result of adding these two summands has to be a power of 2; this is only possible if $\mathsf{x}_2 = \tL\tL\tL\tL\tL$. Now, the summand $\mathsf{y}_{2,2} = \tL\tL\tL\tL\tO\tO\tO\tO\tL$ can interfere with two of the summands $y_{3,1} , y_{4,1} , y_{3,0}$ and $y_{4,0}$ . As each of these four summands is $1$, this contribution to $n^2$ has more than one $\tL$-bit. This concludes this case.

\medskip 

For $\boxed{k=10}$, among the nine independant summands, only one can contribute with two $\tL$-bits. We immediately obtain that $x_0=x_4=1$ and one of $x_1$ and $x_3$ is $1$.

\begin{enumerate}
    \item \underline{$x_3 \neq 1$} (or symmetrically $x_1 \neq 1$). We see that $s(x_3)=s(y_{4,3})=2$ and $s(x_2)=5$. The factor $y_{2,0}=x_2$, which can only interfere with $y_{1,1}=1$, has to contribute with one $\tL$-bit only, therefore, $\mathsf{x}_2=\tL\tL\tL\tL\tL$. Now, the summand $\mathsf{y}_{2,2}=\tL\tL\tL\tL\tO\tO\tO\tO\tO\tL$ can interfere with two of the summands $y_{3,1}=x_3, y_{4,1}=1, y_{3,0}=x_3$, and $y_{4,0}=1$. This contribution to $n^2$ has more than one $\tL$-bit.
    \item \underline{$x_3=x_1=1$.} We have $s(x_2)=6$ and the factor $y_{2,0}=x_2$, which can only interfere with $y_{1,1}=1$, has to contribute with one $\tL$-bit only. Indeed if it contributes with more than $\tL$-bit, by symmetry $y_{4,2}$ will interfere with $y_{3,3}=1$ with more than one $\tL$-bit and we have a contradiction.  
    In fact, if $\mathsf{x}_2\ne\tL\tL\tL\tL\tL\tL$, then the factor $y_{2,0}=x_2$ (resp. $y_{4,2}=x_2$), which can only interfere with $y_{1,1}=1$ (resp. $y_{3,3}=1$) contributes contribute with more than one $\tL$-bit. Therefore, $\mathsf{x}_2=\tL\tL\tL\tL\tL\tL$.
    The summand $\mathsf{y}_{2,2}=\tL\tL\tL\tL\tL\tO\tO\tO\tO\tO\tO\tL$ can interfere with two of the summands $y_{3,1}=1$, $ y_{4,1}=1$, $y_{3,0}=1$, and $y_{4,0}=1$ and its contribution to $n^2$ has to be at most two $\tL$-bits. There is a solution for the given conditions as the summands may be shifted against each other. However, as we will see next, the contradiction arises when we try to find a solution for $\hat{\ell_1},\ldots,\hat{\ell_4}$, where $$\hat{\ell_j}=\sum \limits_{i=1}^j \ell_i + |\mathsf{x}_{i-1}|$$ (we refer to Section~\ref{sec-aux} for the notation). The following pairs of summands have to interfere with each other such that their contribution has one $\tL$-bit only: $$(y_{2,0}, y_{1,1}),\; (y_{3,0}, y_{2,1}),\; (y_{4,1}, y_{3,2}),\; (y_{4,2}, y_{3,3}).$$ More precisely, their least significant bits have to align: 
\begin{equation*}
\left\{
    \begin{array}{lll}
        \hat{\ell_{2}} +1=2\hat{\ell_{1}} \implies \hat{\ell_{2}} =2\hat{\ell_{1}} -1. \\
        \hat{\ell_{3}} +1=\hat{\ell_{2}} +\hat{\ell_{1}} +1 \implies \hat{\ell_{3}} =3\hat{\ell_{1}} -1. \\
        \hat{\ell_{4}} +\hat{\ell_{1}} +1=\hat{\ell_{3}} +\hat{\ell_{2}} +1 \implies \hat{\ell_{4}} =4\hat{\ell_{1}} -2. \\
        \hat{\ell_{4}} +\hat{\ell_{2}} +1=2\hat{\ell_{3}}.
    \end{array}
\right.
\end{equation*}
Recall that $\hat{\ell_{0}}=0$. Thus, the summands $y_{2,2}, y_{3,1}$, and $y_{4,0}$ align as\begin{align*}
\tL\tL\tL\tL\tL\tO\tO\tO\tO\tO&\tO\tL \\ \tL& \\ &\tL
\end{align*}
and their contribution to $n^2$ is $\tL\tL\tL\tL\tL\tO\tO\tO\tO\tL\tL\tL$ which is too much.
\end{enumerate}

\bigskip

For $\boxed{k=11}$ we distinguish the following cases: 

\medskip

\begin{enumerate}
\item \underline{$x_4 \neq 1$.} The summands $y_{4,4}$ and $y_{4,3}$ have to contain two $\tL$-bits and all other summands only contribute with one $\tL$-bit. Thus $x_4=3$ and $x_1=x_0=1$. As $s(y_{4,3})=2$, we have $\mathsf{x}_3=(\tL\tO)^{i}\tL\tL$ for some $0\leq i \leq 4$. Furthermore $y_{2,0}=x_2$ can only interfere with $y_{1,1}$ and this summand has to contribute with only one $\tL$-bit so that $\mathsf{x}_2=\tL^{k}$ for some $1 \leq k \leq 5$. Yet the summand $y_{4,2}$ can only interfere with $y_{3,3}$. The statement $\sum_{0\leq i \leq 4}s(x_i)=11$ implies $i+k=5$. Thus the possible values for the couple $(y_{4,2},y_{3,3})$ are: 

\bigskip
$$\begin{tabular}{l|l}
     $\mathsf{y}_{4,2}$ & $\mathsf{y}_{3,3}$   \\ \hline \hline
     $\tL\tO\tL\tL\tL\tO\tL$ & $\tL\tO\tO\tL$ \\ \hline
     $\tL\tO\tL\tL\tO\tL$ & $\tL\tL\tL\tL\tO\tO\tL$ \\\hline
     $\tL\tO\tL\tO\tL$ & $\tL\tL\tL\tO\tO\tL\tL\tL\tO\tO\tL$ \\\hline
     $\tL\tO\tO\tL$ & $\tL\tL\tL\tO\tO\tL\tO\tO\tO\tL\tL\tL\tO\tO\tL$ \\\hline
     $\tL\tL$ & $\tL\tL\tL\tO\tO\tO\tL\tL\tL\tL\tO\tO\tO\tL\tL\tL\tO\tO\tL$ \\
\end{tabular}$$  

\bigskip

This gives a contribution to $n^2$ that has more than one $\tL$-bit.
\medskip

\item \underline{$x_0 \neq 1$.} This is symmetric to the previous case.  

\medskip

\item \underline{$x_4=x_0=1$.} By symmetry we may assume that $s(x_3) \geq s(x_1)$. As each of the nine independent summands has to contribute to $n^2$ with at least one $\tL$-bit, by inspection of $y_{4,3}=x_3$ we have $s(x_3)\leq 3$. By an inspection of $y_{1,0}=x_1$, we have in the same way $s(x_1)\leq 3$. If $s(x_3)=3$, then $s(x_1)=1$. We finally have to discuss the following four cases:

\bigskip

\begin{center}
\begin{tabular}{c|c|c}
  $s(x_3)$ & $s(x_1)$  & $s(x_2)$ \\ \hline \hline
  3 & 1 & 5 \\ \hline 
  2 & 2 & 5 \\ \hline 
  2 & 1 & 6 \\ \hline 
  1 & 1 & 7 \\ 
\end{tabular}
\end{center}

\bigskip

For the \underline{first line} in this table, except for $y_{4,3}=x_3$, all contributions have to be exactly one $\tL$-bit. Since $y_{2,0}=x_2$ can only interfere with $y_{1,1}=1$, we have $\mathsf{x}_2=\tL\tL\tL\tL\tL$. Furthermore, $y_{4,2}=x_2$ can only interfere with $y_{3,3}=x_3^2$ and this contribution has to be a single $\tL$-bit. This happens if and only if $(x_3^2)_2=\tL\cdots\tL\tO\tO\tO\tO\tL$, i.e. $x_3^2$ correspond to the integer $1+2^5(2^{\lambda}-1)$ for some $\lambda\geq 1$ . We can solve this such as in the case $m=2,k=11$ (c), or more directly with the following calculation: Write $x_3=1+2^a+2^b$ for some $0<a<b$. Then \begin{align*}
x_3^2&=1+2^{a+1}+2^{b+1}+2^{2a}+2^{a+b+1}+2^{2b}, \\
&=1+2^{a+1}(1+2^{b-a}+2^{a-1}+2^{b}+2^{2b-a-1}).
\end{align*}

Thus we have $a=4$ and all other power of $2$ have to be consecutive. This implies in particular $b-a=2$ and $b=3$. This is not possible since $a<b$.

\medskip 

For the \underline{second line}, $\mathsf{x}_1=\tL\tO^{i}\tL$ for some $i\geq 0$. Thus $\mathsf{y}_{1,1}=\tL\tO\tO\tL$ for $i=0$ or $\tL\tO^{i-1}\tL\tO^{i+1}\tL$ for $i\geq 1$. Again, the summand $y_{2,0}=x_2$ can only interfere with $y_{1,1}$ and this contribution has more than one $\tL$-bit except for the cases $(\mathsf{x}_1,\mathsf{x}_2)=(\tL\tL,\tL\tL\tO\tL\tL\tL)$ and $(\tL\tO\tL,\tL\tL\tO\tO\tL\tL\tL)$. For $i\geq 2$, having a contribution of one $\tL$-bit implies that $s(x_2) \geq 6$, and the blocks of $\tO$-bits of $y_{1,1}$ are too large to be covered. Yet $y_{4,2}=x_2$ can only interfere with $y_{3,3}$ with only one $\tL$-bit and we have the same result as before. The value of $x_2$ sets the values of $x_1$ and $x_3$, and we have to study the two following cases: 

\begin{enumerate}
\item If $\mathsf{x}_2=\tL\tL\tO\tL\tL\tL$, then we have $\mathsf{x}_1=\mathsf{x}_3=\tL\tL$. This implies $\mathsf{y}_{3,2}=\tL\tO\tL\tO\tO\tL\tO\tL$ and $y_{3,2}$ can interfere with $y_{4,1}=x_1$ and $y_{4,0}=x_0$. In all cases the contribution is more than one $\tL$-bit.

\item If $\mathsf{x}_2=\tL\tL\tO\tO\tL\tL\tL$, then we have $\mathsf{x}_1=\mathsf{x}_3=\tL\tO\tL$. This implies $\mathsf{y}_{3,2}=\tL\tO\tO\tO\tO\tO\tO\tO\tL\tL$  and $y_{3,2}$ can interfere with $y_{4,1}=x_1$ and $y_{4,0}=x_0$. In all cases the contribution is more than one $\tL$-bit.
\end{enumerate}

\medskip

For the \underline{third line}, we have that $y_{2,0}=x_2$ can only interfere with $y_{1,1}=x_1^2=1$ with at most two $\tL$-bits. We write $\mathsf{x}_3={ \tL\tO^{i}\tL}$ for some $i \geq 0$. We distinguish two cases according to this contribution.
\begin{enumerate}
\item If this contribution has only one $\tL$-bit then we have $\mathsf{x}_2=\tL\tL\tL\tL\tL\tL$. Yet $y_{4,2}=x_2$ can only interfere with $y_{3,3}=x_3^2$ and the contribution has at most two $\tL$-bits. This implies $\mathsf{x}_3 \in \{\tL\tL,\tL\tO\tO\tL,\tL\tO\tO\tO\tO\tL,\tL\tO\tO\tO\tO\tO\tO\tL\}$, i.e $i \in \{0,2,4,6\}$. To see this, if $i\geq 8$, the $\tO$-blocks are too large in $x_3^2$ and we can check all other possible values of $i$ directly. In all theses cases the contribution is exactly of two $\tL$-bits. The summand $y_{3,2}=x_3x_2$ can only interfere with $y_{4,1}=1$ and $y_{4,0}=1$. Since $\mathsf{x}_{3,2} \in \{ \tL\tO\tL\tL\tL\tL\tO\tL, \tL\tO\tO\tO\tL\tL\tO\tL\tL\tL, \tL\tO\tO\tO\tO\tO\tO\tL\tL\tL\tL\tL,\tL\tL\tL\tL\tL\tL\tO\tL\tL\tL\tL\tL\tL\}$, all of theses contributions are more than one $\tL$-bit.

\item Suppose now that the contribution between $y_{2,0}$ and $y_{1,1}$ is exactly two $\tL$-bits. We then have $\mathsf{x}_2=\tL\tO\tL\tL\tL\tL\tL$ or $\mathsf{x}_2=\tL\tL\tL\tL\tL\tO\tL$. Moreover, $y_{4,2}=x_2$ can only interfere with $y_{3,3}=x_3^2$ and this contribution has to be exactly one $\tL$-bit. However, in both cases, this contribution exceeds one $\tL$-bit, by a similar argument as before for the different values of $i$.
\end{enumerate}

\medskip 

For the \underline{fourth line}, the summand $y_{2,0}=x_2$ can only interfere with $y_{1,1}=x_1$ with at most three $\tL$-bits. As in the precedent case, we have a contribution of one $\tL$-bit if and only if $\mathsf{x}_2={ \tL\tL\tL\tL\tL\tL\tL}$, resp., a contribution of two $\tL$-bits if and only if $$\mathsf{x}_2 \in \{\tL\tO^{a}\tL\tL\tL\tL\tL\tL:\; a\geq 1\}\;\cup\; \{\tL\tL\tL\tL\tL\tL\tO^{a}\tL:\; a\geq 1\}:=B_2,$$ resp., a contribution of three $\tL$-bits if and only if 
\begin{align*}
  \mathsf{x}_2 \in\; &\{ \tL\tO^{a'}\tL\tO^{a}\tL\tL\tL\tL\tL:\;a,a'\geq 1\}\;\cup\; \{\tL\tO^{a'}\tL\tL\tL\tL\tL\tO^{a}\tL:\;a,a'\geq 1\}\\
  &\qquad \cup \;\{ \tL\tL\tL\tL\tL\tO^{a'}\tL\tO^{a}\tL :\;a,a'\geq 1\}.
\end{align*}
Denote this last union by $B_3$.

\begin{enumerate}
\item If $\mathsf{x}_2 \in B_3$ then $y_{4,2}$ can only interfere with $y_{3,3}=1$ resulting in one $\tL$-bit. This is not possible since $y_{4,2}=x_2$. 

\item If $\mathsf{x}_2 \in B_2$ then $y_{4,2}=x_2$ can only interfere with $y_{3,3}=1$ with at most two $\tL$-bits. If this contribution does not exceed two bits, it has to be exactly two $\tL$-bits by the form of the binary expansion of $x_2$. This implies that the summand $y_{3,2}=x_2$ can interfere with $y_{4,1}=1$ and $y_{4,0}=1$ with at most one $\tL$-bit. This is not possible. 

\item If $\mathsf{x}_2={ \tL\tL\tL\tL\tL\tL\tL\tL}$ then we have $\mathsf{y}_{2,2}={ \tL\tL\tL\tL\tL\tL\tO\tO\tO\tO\tO\tO\tO\tL}$ can interfere with $y_{4,1}$, $y_{4,0}$, $y_{3,1}$ and $y_{3,0}$ and all contributions should be equal to {$ \tL$} in the end. All theses contributions have more than one $\tL$-bit since the $\tO$-block is too large. 
\end{enumerate} 
\end{enumerate} 

Therefore, the proof of \Cref{main_lemma} is complete for $m=4$. 

\subsection{The case $m=5$}
There are eleven independent summands $y_{5,5},\ldots,y_{5,0},\ldots,y_{0,0}$ each contributing to $n^2$ with exactly one $\tL$-bit.  We have not drawn the interference graph since it gets too large and it is not needed to follow the argument. Recall that we here necessarily have \boxed{k=11}. Since $y_{5,5}=y_{5,4}=y_{1,0}=y_{0,0}=1$ then $x_5=x_4=x_1=x_0=1$. The summand $y_{5,3}=x_3$ can only interfere with $y_{4,4}=1$ and the result of these two summands has to be a power of two. The same remark is also true for $y_{2,0}$ which could only interfere with $y_{1,1}=1$. We therefore have $\mathsf{x}_3=\tL^{n_3}$ and $\mathsf{x}_2 =\tL^{n_2}$ for some $n_3 \geq 1$ and $n_2\geq 1$ such that $n_3+n_2=7$. \begin{enumerate}
\item If \underline{$n_3=6$}, i.e. $\mathsf{x}_3=\tL\tL\tL\tL\tL\tL$, then $\mathsf{y}_{3,3}=\tL\tL\tL\tL\tL\tO\tO\tO\tO\tO\tO\tL$ can interfere with three of the summands $y_{5,2}, y_{5,1}, y_{5,0}, y_{4,2}, y_{4,1}$, and $y_{4,0}$. As each of these six summands is $1$, this contribution to $n^2$ has more than $\tL$-bit since the $\tO$-block of $y_{3,3}$ containing 6 consecutive $\tO$-bits is too large. 
\item If \underline{$n_3=5$}, i.e. $\mathsf{x}_3=\tL\tL\tL\tL\tL$ and $\mathsf{x}_2=\tL\tL$. Then $\mathsf{y}_{3,3}=\tL\tL\tL\tL\tO\tO\tO\tO\tO\tL$ can interfere with three of the summands $\mathsf{y}_{5,2}=\mathsf{y}_{4,2}=\tL\tL$, $y_{5,1}=y_{5,0}=y_{4,1}=y_{4,0}=1$. We conclude as done previously.
\item If \underline{$n_3=4$}, i.e $\mathsf{x}_3=\tL\tL\tL\tL$ and $\mathsf{x}_2= \tL\tL\tL$ then $\mathsf{y}_{3,3} = \tL\tL\tL\tO\tO\tO\tO\tL$ can interfere with three of the summands $\mathsf{y}_{5,2}=\mathsf{y}_{4,2}=\tL\tL\tL$, $ y_{5,1}=y_{5,0}=y_{4,1}=y_{4,0}=1$. There exists a solution to this which is graphically presented as follows: \begin{align*}
\tL\tL\tL\tL\tL\tO\tO\tO\tO&\tL \\ \tL\tL\tL\phantom{\tL}& \\ \tL& \\ &\tL
\end{align*}
Here we have to study other situations of interference in order to deduce a contradiction. The following pairs of summands have to interfere and to contribute with one $\tL$-bit: $(y_{2,0},y_{1,1})$, $(y_{3,0},y_{2,1})$, $(y_{5,2},y_{4,3})$, $(y_{5,3},y_{4,4})$. More precisely, their last significant bits have to align:
\begin{equation*}
\left\{
\begin{array}{llll}
\hat{\ell_{2}} +1=2\hat{\ell_{1}}.  \\
\hat{\ell_{3}} +1=\hat{\ell_{2}} +\hat{\ell_{1}} +1. \\
\hat{\ell_{5}} +\hat{\ell_{2}} +1=\hat{\ell_{4}} +\hat{\ell_{3}} +1. \\
\hat{\ell_{5}} +\hat{\ell_{3}} +1=2\hat{\ell_{4}}.
\end{array}
\right.
\end{equation*}
This implies $\hat{\ell_{2}}=2\hat{\ell_{1}}-1$, $\hat{\ell_{3}}=3\hat{\ell_{1}}-1$, $\hat{\ell_{4}}=4\hat{\ell_{1}}$ and $\hat{\ell_{5}}=5\hat{\ell_{1}}$. Thus the summand $y_{3,3}$ does not align in a correct way with all the other summands. In fact, since we are looking for one $\tL$-bit of contribution, we need an alignment of the first bit. Here this is not the case since all the following values are different \begin{equation*}
\left\{
\begin{array}{llll}
2\hat{\ell_{3}}=6\hat{\ell_{1}}-2 \\
\hat{\ell_{5}}+\hat{\ell_{2}}+1=7\hat{\ell_{1}}. \\
\hat{\ell_{4}}+\hat{\ell_{2}}+1=6\hat{\ell_{1}}. \\
\hat{\ell_{5}}+\hat{\ell_{1}}+1=6\hat{\ell_{1}}+1. \\
\hat{\ell_{5}}+1=5\hat{\ell_{1}}+1. \\
\hat{\ell_{4}}+\hat{\ell_{1}}+1=5\hat{\ell_{1}}+1. \\
\hat{\ell_{4}}+1=4\hat{\ell_{1}}+1. 
\end{array}
\right.
\end{equation*}
This provides us with the wanted contradiction.
\item If $1\leq n_3\leq 3$ then it turns out that $\mathsf{x}_2 =\tL^{n_2}$ for some $4 \leq n_2 \leq 6$ which is symmetric to one of the previous cases.

\end{enumerate}

Therefore, the proof of \Cref{main_lemma} is complete for $m=5$, and since all cases $1\leq m\leq 5$ are treated, this finishes the proof of  \Cref{main_lemma} and therefore of \Cref{main_theorem}.

\section{On the equation $s(n^2)\in\{4,5\}$} \label{sec-45}

The aim in this section is to study the equations $s(n^2)\in\{4,5\}$ in odd integers. Compared to the previous sections, the point of view is different here as there is no precondition on the weight of $n$. Since there is neither an \emph{a priori} bound on the length of $n$, a simple direct computation is not sufficient to determine finiteness of solutions. Our aim is to solve these equations for all $n$ composed by as many $\tL$-bits as possible. The heuristic is that the larger the weight of $n$ the more unlikely such an $n$ can be solution of $s(n^2)\leq 5$, since carry propagations have to cancel out more and more bits. In view of Conjecture~\ref{conjectureE4}, under this heuristic, this shows that it is more and more improbable to find new solutions other than those given by that conjecture.

\subsection{The case $s(n^2)=4$}\label{subn24} 

Let $n$ be an odd integer with $s(n)=k \geq 9$ such that $s(n^2)=4$. To start with, we can suppose that $k\geq 9$ since all the other cases are done in~\cite{harelaishramstoll2011}, but our algorithms could also handle smaller $k$. Write $n=1+2^{\ell}m$ with $\ell \geq 1$ and $m$ an odd integer which satisfies $s(m)=k-1$. Thus we have $n^2=1+2^{\ell+1}m+2^{2\ell}m^2$ and it implies that $s(2^{\ell+1}m+2^{2\ell}m^2)=3$. Otherwise said, we have 
\begin{equation}\label{mprod3}
    s(m+2^{\ell-1}m^2)=3.
\end{equation}

At this point there are two possible ways to attack the problem. The first one would be to use \Cref{s(ab)=3}, more precisely, a specific case in its proof where the upper bound can be improved, see~\cite{kanekostoll2021}. This allows to get
\begin{equation}\label{firstboundl}
  m(1+2^{\ell-1}m)\leq 2^{k(k-1)-13}, 
\end{equation}
and  therefore we would have $m<2^{k(k-1)-(\ell-1)/2-2}$ which in turn implies a bound on $\ell$ and what would remain is to use the algorithm \texttt{next} for each such $\ell$ to find the set of the solutions. However, this method is not sufficient for the case $s(n^2)=5$ and this is the main reason that we have created the algorithm \texttt{max-integer} that we describe shortly in the sequel (we give a more detailed description in Section~\ref{sec-algos}). 

\bigskip

According to~(\ref{mprod3}) we have to allocate three $\tL$-bits in the sum $S=m+2^{\ell-1}m^2$. In order to do so, we use the basic fact that if two integers $a,b$ satisfy $a \equiv b \pmod{2^{\lambda}}$ then $a^2 \equiv b^2 \pmod{2^{\lambda+1}}$ for all $\lambda\geq 2$, and if $a\equiv b \pmod{2}$ then $a^2 \equiv b^2 \pmod 8$. In this way if we write the binary decomposition of $m$ bit by bit from the least significant digits to the highest significant digits, then we can also deduce at the same time the binary decomposition of $2^{\ell-1}m^2$ bit by bit again from the least significant digits to the highest significant digits. The algorithm tests if the next bit in the binary decomposition of $m$ could be a $\tL$-bit or a $\tO$-bit in order to satisfy~(\ref{mprod3}). Since we have supposed a bound on the weight of $m$, the algorithm stops when the allowed amount of $\tL$-bits is reached.  

\bigskip

We show an example where we suppose $\ell=1$ and the $\ell(m)$ least significant digits of $S$ to be $\tO$. The first (rightmost) bit we add in the binary structure of $m$ is a $\tL$-bit in order to propagate the carry. We can deduce the second bit of $m^2$ but in this case it is already determined, it is a $\tO$-bit. Since we have supposed that there are no $\tL$-bits on the lower significant part of the sum $S$, the third bit of $m$ is necessarily a $\tL$-bit, and $m\equiv 7 \pmod 8$. Thus $m^2 \equiv 1 \pmod{16}$, and the fourth bit of $m^2$ is a $\tO$-bit. By iterating this argument we see that we can only add $\tL$-bits in $m$ and we obtain the following sum with a block of $(k-1)$ $\tL$-bits in $m$ (as before, we write (*) for an arbitrary finite string of bits). 

\bigskip

\begin{center}
\begin{tabular}{l l l l l l l l l l l l l l l l  l l l l l l l l l l }
   &  && &   &$\tL$ &$\cdots$ & $\tL$ & $\tL$ & $\tL$ &&&$=\;m$\\
   & +&& & (*) &$\tO$ &$\cdots$ & $\tO$ & $\tO$ & $\tL$ &&&$=\;m^2$ \\
  \hline
   & &&& (*) & $\tO$ & $\cdots$ & $\tO$ & $\tO$ & $\tO$ &&&$=\;S.$
\end{tabular}
\end{center}

\bigskip

The algorithm also considers the cases where the right part of $S$ contains one and two $\tL$-bits, see \Cref{sec-algos}, the case above  describes the main idea of the algorithm. 
\bigskip

For the search algorithm to work efficiently, we are interested in finding good bounds for $\ell$. In fact, there is a much better bound for $\ell$ than the one given by~(\ref{firstboundl}): 

\begin{lem}
Let $n$ be an odd integer such that $s(n)=k\geq 4$, $s(n^2)=4$ and $n=1+2^{\ell}m$ with $\ell\geq 2$ and $m$ an odd integer. Then we have $\ell \leq 2k$. 
\end{lem}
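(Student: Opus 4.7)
The plan is to translate the digit condition $s(n^2)=4$ into a multiplicative identity and then carry out a careful bit-level analysis. Using $n^2-1=(n-1)(n+1)=2^{\ell+1}A$, where $A=m(1+2^{\ell-1}m)=m+2^{\ell-1}m^2$, the hypothesis $s(n^2)=4$ is equivalent to $s(A)=3$. If the two summands in this expression were non-interfering (i.e., $\ell(m)<\ell-1$), we would have $s(A)=s(m)+s(m^2)\geq (k-1)+1\geq 4$, a contradiction. So they must interfere, forcing $\ell(m)\geq\ell-1$ and hence $m\geq 2^{\ell-1}$.

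Next I would decompose $m=t+2^{\ell-1}h$ with $t=m\bmod 2^{\ell-1}$ (odd, since $m$ is odd and $\ell\geq 2$) and $h\geq 1$. Because $t<2^{\ell-1}$, the rewriting $A=t+2^{\ell-1}(h+m^2)$ is a clean split, so $s(A)=s(t)+s(h+m^2)=3$. Case-analyze on $s(t)\in\{1,2,3\}$: the case $s(t)=3$ is immediately impossible; the case $s(t)=2$ forces $h+m^2=2^r$ for some $r\geq 1$; and the case $s(t)=1$ forces $h+m^2=2^P+2^Q$ for some $0<Q<P$.

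The main step is a bit-level analysis in each case. The key observation is that from $m^2=t^2+2^\ell t h+2^{2\ell-2}h^2$ one has $m^2\equiv t^2\pmod{2^\ell}$, so $m^2$ has at most $s(t^2)\leq 3$ non-zero bits in positions $[0,\ell-1]$. In order for the sum $h+m^2$ to collapse to only $1$ or $2$ non-zero bits, the carry cascade in the addition has to cancel almost everything below position $\ell$; standard carry bookkeeping (for addition producing a power of two, each carry-in is $1$ and exactly one summand contributes a $1$-bit at each intermediate position) then forces $h$ to have a $1$-bit at nearly every position in $[0,\ell-1]$ where $m^2$ has a $0$-bit. Quantitatively, this gives $s(h)\geq \ell-c$ for a small explicit constant $c$. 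Since $s(h)=k-1-s(t)\leq k-2$, we obtain $\ell\leq k+c$, which is comfortably $\leq 2k$ for $k\geq 4$.

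The hard part will be rigorously tracking the carry propagation in the addition $h+m^2$, especially in the case $s(t)=1$ where $h+m^2=2^P+2^Q$ has two non-zero bits and the carry chain can break at position $Q$ and restart below $P$. This leads to sub-cases depending on whether the carry chain between $Q$ and $P$ is continuous, and on the relative positions of $Q$, $P$, and $\ell$; in the broken-chain case one shows that the carry must necessarily restart at position $\ell$ (using $m^2_\ell=1$ from the term $2^\ell h$, with $h$ odd), and in every sub-case the same kind of lower bound $s(h)\geq\ell-O(1)$ is extracted, concluding the proof.
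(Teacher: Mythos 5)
Your overall strategy is viable and genuinely different from the paper's: the paper argues by contradiction from $\ell>2k$, using the largeness of $\ell$ to force non-interference of well-separated blocks and thereby pin down explicit shapes for $m$ (e.g.\ $m=1+2^{\ell-1}(2^{k-1}-1)$), whereas you work directly from the congruence $m^2\equiv t^2\pmod{2^\ell}$ and a complement count on the low bits of $h$. Your split $A=t+2^{\ell-1}(h+m^2)$ with $s(A)=s(t)+s(h+m^2)$ is correct, as is the preliminary reduction to $m\geq 2^{\ell-1}$, and the case $s(t)=2$ is essentially complete: there $h=2^r-m^2$ with $r\geq 2\ell-2\geq\ell$, so $h\equiv -t^2\pmod{2^\ell}$ and $s(h)\geq \ell-2$, giving $\ell\leq k+1$. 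This part is clean and in fact yields a sharper constant than the stated bound.

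The gap is precisely in the sub-case you yourself flag as "the hard part", and the closing assertion that "in every sub-case the same kind of lower bound $s(h)\geq\ell-O(1)$ is extracted" is the statement that is not established. When $s(t)=1$ and $h+m^2=2^P+2^Q$ with $Q<\ell$ (the only problematic range, since $P\geq 2\ell-2\geq\ell$ always and $Q\geq\ell$ gives $h\equiv-1\pmod{2^\ell}$, hence $s(h)\geq\ell$ at once), the congruence modulo $2^\ell$ yields only $h\equiv 2^Q-1\pmod{2^\ell}$, i.e.\ $s(h)\geq Q$, which bounds nothing. To close this you must pass to the second window of positions $[\ell,2\ell-3]$, where $m^2=1+2^\ell h+2^{2\ell-2}h^2$ shows the bits of $m^2$ are the low bits of $h$ itself (the block $\tO\cdots\tO\tL^{Q}$), while the bits of $h$ there are those of $H$ in $h=(2^Q-1)+2^\ell H$. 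One then needs two verifications you omit: first, that $Q=0$ is impossible (otherwise $2^\ell\mid h$ and the identity forces a power of two to have an odd factor $>1$), which is what makes $h$ odd and hence $m^2_\ell=1$; second, that the carry created at position $\ell$ cannot die before position $P$, because extinguishing it would output a $\tL$-bit and $2^P+2^Q$ has none strictly between $Q$ and $P$. Granting these, $H$ must carry a $\tL$-bit at every position of $[0,\ell-3]$ where $h$ has a $\tO$-bit, so $s(H)\geq\ell-1-Q$ and $s(h)=Q+s(H)\geq\ell-1$, i.e.\ $\ell\leq k-1$. With that window argument supplied the proof goes through; without it, the conclusion in the hardest case is only asserted.
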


\begin{proof}

%
%
%

Suppose that $\ell>2k$ and set $S=m+2^{\ell-1}m^2$. Then $s(S)=3$ and $S$ is an odd integer. We consider the following addition ($\omega$, $\omega'$ are binary words, and $\varepsilon_i\in\{\tO, \tL\}$): \begin{center}
\begin{tabular}{l l l l l l l l l l l l l l l l  l l l l l l l l l l }
   &&&&&&  $\omega$ & $\varepsilon_{\ell-3}$ &$\cdots$ &$\varepsilon_{0}$ & $\tL$ &&$=\;m$\\
   & +&& & (*) & $\omega'$ &  $\tL$ & &&&&&$=\;2^{\ell-1}m^2$ \\
  \hline
   & &&& (*)& (*) & (*) & $\varepsilon_{\ell-3}$ &$\cdots$ &$\varepsilon_{0}$ &$\tL$ &&$=\;S.$
\end{tabular}
\end{center}

The block $\varepsilon_{\ell-3}\cdots \varepsilon_0$ is composed of at most one $\tL$-bit since $S$ contains three $\tL$-bits and carries propagate only to the higher significant digits.
We distinguish two cases according to the $\ell(m)$ lowest significant bits of $S$ (note that this part contains well the contribution of $\omega$ from the first summand $m$ and its interference with the rightmost $\tL$-bit of $2^{\ell-1}m^2$). This part will be called the (binary) \emph{right part of $S$}. It contains at least one $\tL$-bit (the parity bit), and at most two $\tL$-bits (which includes the parity bit). It cannot contain three $\tL$-bits since the second summand has a binary expansion strictly longer than the first summand. 

\medskip

We write $w$ for the integer whose binary expansion corresponds to $\omega$.

\medskip

\begin{enumerate}
    \item \underline{The right part of $S$ contains only the parity $\tL$-bit.} This implies that $\varepsilon_i=\tO$ for all $0\leq i\leq \ell-3$. This means that $m=1+2^{\ell-1}w$ and $m^2=1+2^{\ell}w+2^{2\ell-2}w^2$. Thus the $\ell$ lower significant bits of $m^2$ are all $\tO$-bits except the parity $\tL$-digit:

\medskip
\begin{center}
\begin{tabular}{l l l l l l l l l l l l l l l l l l  l l l l l l l l l l }
   &&&&&&&&&  $\omega$ & $\tO$ &$\cdots$ &$\tO$ & $\tL$ &&$=\;1+2^{\ell-1}w$\\
   & +&& & & $\omega$& $\tO$ &$\cdots$ &$\tO$ &  $\tL$ & &&&&&$=\;2^{\ell-1}(1+2^{\ell}w)$ \\
     & +&$\omega^2$& & $\tO$ & $\cdots$& $\tO$ &$\cdots$ &$\tO$ &  $\tO$ & &&&&&$=\;2^{\ell-1}\times 2^{2\ell-2}w^2$ \\\hline
   & &&& (*)& (*) & & (*)& & & $\tO$ &$\cdots$ &$\tO$ &$\tL$ &&$=\;S.$
\end{tabular}
\end{center}    

\medskip
Now, consider the additions of $\omega$ and $\tL$ in the middle part between the first and the second summand. Since $\ell>2k>k$, the word $\omega$ in the second summand does not interfere with the $\omega$ of the first summand.  This implies that $\omega$ is a single block of $(k-1)$ consecutive $\tL$-bits since otherwise the carry does propagate sufficiently far. This implies that
$$
m=1+2^{\ell-1}(2^{k-1}-1).
$$
Thus $m^2=1+2^{\ell}(2^{k-1}-1)+2^{2\ell-2}(2^{2k-2}-2^{k}+1)$, and we have
\begin{align*}
    m+2^{\ell-1}m^2&=1+2^{\ell-1}(2^{k-1}-1)+2^{\ell-1}+2^{2\ell-1}(2^{k-1}-1)+2^{3\ell-3}(2^{2k-2}-2^{k}+1) \\ &=1+2^{\ell-2+k}+2^{2\ell-1}(2^{k-1}-1)+2^{3\ell-3}(2^{2k-2}-2^{k}+1).
\end{align*}
Since $\ell>2k$, the terms in the above sum are non-interfering and therefore $m+2^{\ell-1}m^2$ has too many $\tL$-bits. 

\medskip

\item \underline{The right part of $S$ contains two isolated $\tL$-bits.} 

There are two cases: 

\medskip 

If this $\tL$-bit is located within the block $\varepsilon_{\ell-3}\cdots\varepsilon_0$ then there exists $i_0$ such that $\varepsilon_{i_0}=\tL$ and $\varepsilon_{i}=\tO$ for $i\neq i_0$. Thus $m=1+2^{i_0}+2^{\ell-1}w$, with $1\leq i_0 <\ell-1$, and $$2^{\ell-1}m^2=2^{\ell-1}+2^{i_0+\ell}+2^{2i_0+\ell-1}+2^{2\ell-1}w+2^{i_0+2\ell-1}w+2^{3\ell-3}w^2.$$

\begin{enumerate}
     
\item If $2i_0\geq \ell$ then we have $2^{\ell-1}m^2=2^{\ell-1}+2^{i_0+\ell}+2^{2\ell-1}w'$ for some integer $w'$. With a similar argument as in the former case, we get $$
m=1+2^{i_0}+2^{\ell-1}((2^{i_0}-1)+2^{i_0+1}(2^{k'}-1)),
$$
for some $k'\geq 0$ with $k'+i_0=k-2$. This leads to $i_0\geq \ell/2>k>k-2$, which gives a contradiction.

\item If $\ell/4<i_0<\ell/2$ then we have $2^{2i_0+\ell-1}<2^{2\ell}$ and this implies that  $m$ has the form
$$
m=1+2^{i_0}+2^{\ell-1}((2^{i_0}-1)+2^{i_0+1}(2^{i_0-1}-1)+2^{2i_0+1}(2^{k'}-1)).
$$

This leads to $s(m)\geq 2i_0> \ell/2>k$, which gives again a contradiction.

\item If $i_0\leq \ell/4$, then we have $2^{\ell-1}m^2=2^{\ell-1}+2^{i_0+\ell}+2^{2i_0+\ell-1}+2^{2\ell-1}w'$ for some integer $w'$ and we obtain
$$
m=1+2^{i_0}+2^{\ell-1}((2^{i_0}-1)+2^{i_0+1}(2^{i_0-1}-1)+2^{2i_0+1}(2^{\ell-2i_0-1}-1)).
$$

This leads to $s(m)\geq \ell-2i_0\geq \ell/2>k$, which gives a contradiction.
\end{enumerate}

\bigskip
If $\varepsilon_i=\tO$ for all $i$ then we have two remaining cases. If $w$ is even then we can use the same reasoning as before in the case (2) (a) since $S$ has two isolated $\tL$-bits. If $w$ is odd then we write $\omega=\omega'\tO\tL^{\lambda}$, with a possibly empty binary word $\omega'$ and $1\leq \lambda \leq k-1$. We can suppose that $\omega'$ is not empty since $\omega=\tL^{k-1}$ is already done in the case (1). Otherwise, $S$ has two isolated $\tL$-bits and $\omega'$ is composed by a $\tO$-block of length at least $\ell$. Therefore we can conclude with the same argument as before.
\end{enumerate}  
\end{proof}

Our implementation of the algorithm \texttt{max-integer} shows that all odd solutions $n$ such that $s(n)\leq 17$ are $(\ell,m) \in \{(3,2),(1,7),(1,23),(1,55)\}$ and this translates into 
$$\bigcup_{\lambda\leq 17 }E_{4,\lambda}=\{13,\;15,\;47,\;111\},$$
which is~(\ref{union4lambda}) in \Cref{thm_s(n2)=4 and 5}.


\subsection{The case $s(n^2)=5$}

The following result implies~(\ref{Edecomp}) in \Cref{thm_s(n2)=4 and 5}.

\begin{lem}\label{lemn25}
Let $n$ be an odd integer such that $s(n^2)=5$. Then: 
\begin{enumerate}
\item There is only a finite number of odd $n$ such that $s(n)\geq 4$.
    \item If $s(n)=3$, then $n$ is of the form $$1+2^{\ell}+2^{\ell+1}, \quad 1+2+2^{\ell},\quad\mbox{ or }\quad  1+2^{\ell}+2^{2\ell-1},$$
    for some $\ell \geq 3$.
\end{enumerate}

\end{lem}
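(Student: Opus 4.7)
For (2), I would write $n$ of weight $3$ as $n = 1+2^a+2^b$ with $1\leq a < b$ and expand
$$n^2 = 1+2^{a+1}+2^{b+1}+2^{2a}+2^{a+b+1}+2^{2b}.$$
The six exponents $0, a+1, b+1, 2a, a+b+1, 2b$ are generically distinct, which would force $s(n^2)=6$, so for $s(n^2)=5$ at least one pair of them must coincide. Enumerating the fifteen possible pairwise equalities, the only ones compatible with $1 \leq a < b$ reduce to $a+1=2a$, $b+1=2a$, and $a+b+1=2b$, giving respectively the three families $1+2+2^\ell$, $1+2^\ell+2^{2\ell-1}$, and $1+2^\ell+2^{\ell+1}$. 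In each case the coincident pair produces a carry bit one position higher, and a direct check shows that this new bit avoids the remaining four exponents precisely when $\ell \geq 3$; the sharpness of the threshold is witnessed by $n = 7$ (with $s(n^2)=3$) and $n = 13$ (with $s(n^2)=4$), where a secondary collision drops the weight below~$5$.

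For (1), I would follow the strategy of Section~\ref{sec-pr}. Given a hypothetical odd $n$ with $s(n) \geq 4$ and $s(n^2) = 5$, an adaptation of Lemma~\ref{factorization} together with Lemma~\ref{interference} lets every sufficiently large $n$ be factorized as $(n)_2 = x_m\tO^{\ell_m}\cdots\tO^{\ell_1} x_0$ with $\ell_{\min}$ large enough that the $2m+1$ summands $y_{m,m}, y_{m,m-1}, \ldots, y_{m,0}, y_{m-1,0}, \ldots, y_{0,0}$ of $n^2$ are pairwise non-interfering. Each such summand contributes at least one $\tL$-bit, so $2m+1 \leq 5$ and $m \in \{1,2\}$. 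When $m=2$ the five independent summands must each contribute exactly one $\tL$-bit, which forces $x_0=x_1=x_2=1$ and hence $s(n)=3$, contradicting $s(n) \geq 4$. When $m=1$ the three non-interfering summands $x_0^2$, $x_0x_1$, $x_1^2$ must satisfy $s(x_0^2)+s(x_0x_1)+s(x_1^2)=5$; since weight $1$ forces an odd factor to equal $1$, a case analysis on the six ordered partitions of $5$ into positive integers forces at least one of $x_0, x_1$ to equal $1$, after which the residual equation $s(x)+s(x^2)=4$ has only the solution $x=3$ (by direct enumeration or Lemma~\ref{s(ab)=2}), again giving $s(n) \leq 3$. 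Thus no large $n$ with a long inner gap exists, and the remaining finite range is dispatched by running \texttt{max-integer} for each relevant weight $\lambda$.

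The principal obstacle is converting this case-by-case exclusion into a uniform finiteness statement in $s(n)$: the non-interference threshold from Lemma~\ref{interference} grows quadratically in $s(n)$, so the residual ``no long gap'' window grows with $s(n)$ and an a priori upper bound on $s(n)$ is needed before \texttt{max-integer} can finish the job. I would combine an explicit \texttt{max-integer} sweep for $s(n) \in \{4, \ldots, K\}$ (with $K=15$ already recorded in~\eqref{union5lambda}) with a counting argument showing that for $s(n)>K$ any binary expansion whose gaps are all short necessarily generates, through its cross-products, strictly more than five $\tL$-bits in $n^2$ after all carries have settled.
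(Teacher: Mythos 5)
Your argument for part~(2) is correct and takes a genuinely different route from the paper. The paper obtains the three families as a by-product of the same block-factorization machinery it uses for part~(1): it adapts Lemma~\ref{factorization} to $s(n^2)=5$, so that the $2m+1$ non-interfering summands force $m\le 2$, and then reads off $x_1=3,\ x_0=1$ (for $m=1$) resp.\ $x_2=x_1=x_0=1$ with an alignment condition (for $m=2$). That only pins down the shape of \emph{sufficiently large} weight-$3$ solutions and implicitly leaves a finite verification for the small ones. Your direct expansion of $(1+2^a+2^b)^2$ into the six exponents $0,a+1,b+1,2a,a+b+1,2b$ and the enumeration of the admissible coincidences $a+1=2a$, $b+1=2a$, $a+b+1=2b$ is more elementary, treats all weight-$3$ integers uniformly, and explains the threshold $\ell\ge 3$ via $n=7$ and $n=13$. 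The one point to state explicitly is that a coincidence creates a carry whose landing position must itself be checked against the surviving exponents (and that simultaneous coincidences push the weight below $5$); your $\ell\ge3$ verification does exactly this, so part~(2) is complete.

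For part~(1) you follow the paper's route: adapt Lemma~\ref{factorization}, use $2m+1\le 5$ to get $m\in\{1,2\}$, and show that every factorized solution has $s(n)=3$. Your case analysis ($m=2$ forces $x_0=x_1=x_2=1$; $m=1$ forces some $x_i=1$ and then $s(x)+s(x^2)=4$, hence $x=3$) coincides with the paper's. But the obstacle you flag at the end is a genuine gap in your proposal: the threshold $N_k$ of Lemma~\ref{factorization} and the non-interference margin $k^2$ of Lemma~\ref{interference} both grow with $k=s(n)$, so the argument only gives, for each fixed weight $k\ge 4$, finitely many solutions below a $k$-dependent bound. Concluding finiteness over all $k\ge 4$ requires an a priori upper bound on $s(n)$ for solutions of $s(n^2)=5$, and your closing ``counting argument'' is only a sketch, not a proof; the \texttt{max-integer} sweep in \eqref{union5lambda} reaches only $\lambda\le 15$ and cannot substitute for such a bound. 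You should be aware that the paper's own proof is silent on this same point (it simply asserts that ``almost all'' solutions factorize), so you have correctly isolated the one step that is not routine—but as written, neither your sketch nor the paper's two lines closes it.
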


\begin{proof}
We adapt \Cref{factorization} when the amount of $\tL$-bits in the square is fixed to be $5$. The implied constant (i.e. the constant $N_k$ appearing in its proof) will be different but we still we get that if there were an infinite number of solutions, then almost all (i.e. all but a finite number) of these solutions can be factorized this way. We again distinguish according to the number $m$ of blocks in the factorization. 

\medskip
\begin{itemize}
    \item \underline{$m=1$.} We have $(n)_2=\mathsf{x}_1\tO\cdots\tO \mathsf{x}_0$, with a large inner block of $\tO$-bits. By symmetry we can suppose that $s(x_1)\geq s(x_0)$. Since we have the three independents contributions $x_1^2$, $x_1\cdot x_0$ and $x_0^2$ for $n^2$, we see that exactly one of them has to contain one single $\tL$-bit. Thus $x_0=1$ andl $s(x_1^2)+s(x_1)=4$, i.e $x_1=3$. Then $n$ is on the form $1+2^{\ell}+2^{\ell+1}$ for sufficiently large $\ell$. We can easily check that this form is valid for all $\ell \geq 3$. By symmetry we have a second infinite family, namely $1+2+2^{\ell}$ for $\ell \geq 3$. 
    \medskip
    \item \underline{$m=2$.} We use the interference graph given in Figure~\ref{fig m=2} to deduce that $x_2=x_1=x_0=1$ and the contribution of $x_1^2+x_2\cdot x_0$ is only of one $\tL$-bit. This implies that $n$ is of the form $1+2^{\ell}+2^{2\ell-1}$ for sufficiently large $\ell$. As before, we can check that this form is valid for all $\ell \geq 3$. 
\end{itemize}
\end{proof}

 We now show how to obtain~(\ref{union5lambda}) via the algorithm \texttt{max-integer}. The method used here is similar to the previous case. We suppose $k\geq 4$ to avoid the infinite families in Lemma~\ref{lemn25}. Let $n$ be an odd integer such that $s(n)=k \geq 4$ and $s(n^2)=5$.  Let us write $n=1+2^{\ell_1}+2^{\ell_1+\ell_2}m$ with $m$ an odd integer with $s(m)=k-2$ and $\ell_1,\ell_2 \geq 1$. We have \begin{align} \label{n2=5}
n^2=1+2^{\ell_1+1}+2^{2\ell_1}+2^{\ell_1+\ell_2+1}m+2^{2\ell_1+\ell_2+1}m+2^{2\ell_1+2\ell_2}m^2.
\end{align}

We evaluate the number of isolated bits and deal with different cases according to the values of $\ell_1$ and $\ell_2$.

\begin{enumerate}
\item \underline{$\ell_1=1$.} Here~\eqref{n2=5} becomes \begin{align*}
n^2=1+2^{3}+2^{\ell_2+2}m+2^{\ell_2+3}m+2^{2\ell_2+2}m^2.
\end{align*} Two subcases cases arise: \begin{enumerate}
\item \underline{$\ell_2>1$.} We here have two isolated bits (associated with the powers $1$ and $2^3$) and this leads to \begin{align*}
s(m\cdot (3+2^{\ell_2}m))=3.
\end{align*}
By a small adaptation of the algorithm \texttt{max-integer}, we find that the only solutions are $\ell_2=2,m=11$ and $\ell_2=3,m=3$. Thus $n=51$ and $n=91$ satisfy $s(n^2)=5$. 
\item  \underline{$\ell_2=1$.} We have 
$$s(1+3m+2m^2)=s((2m+1)\cdot (m+1))=4.$$
Again, we adapt the algorithm \texttt{max-integer} and the solutions for $m$ is the set $$\{7,\; 19,\; 23,\; 55,\; 69,\; 119,\; 181,\; 367 \}.$$ The set of solutions for $n$ is therefore $$\{31,\;79,\;95,\;223,\;279,\;479,\;727,\;1471\}.$$ We mention that it is this case that motivated us to create the algorithm \texttt{max-integer} since the results from~\cite{kanekostoll2021} are not sufficient to conclude.  
\end{enumerate}
\medskip

\item \underline{$\ell_1>1$.}  This leads to two isolated bits (corresponding to the power $1$ and $2^{\ell_1+1}$). Thus~\eqref{n2=5} becomes \begin{align*}
s(2^{\ell_1}+2^{\ell_2+1}m+2^{\ell_1+\ell_2+1}m+2^{\ell_1+2\ell_2}m^2)=3.
\end{align*} 

A last adaptation of the algorithm gives $n\in\{29,\;157,\;5793\}$ as the solution set. 
\end{enumerate}

Thus we have 
$$\bigcup_{4\leq \lambda\leq 15 }E_{5,\lambda}=\{29,\;31,\;51,\;79,\;91,\;95,\;157,\;223,\;279,\;479,\;727,\;1471,\;5793\},$$
which is~(\ref{union5lambda}). Note that the solution with the largest weight is $1471$ with $s(1471)=9$. 

\section{Description of the algorithms}\label{sec-algos}

\subsection{Algorithm \texttt{next}} 

The aim is to generate efficiently all odd integers smaller than a fixed bound with a fixed weight, and most importantly, the sets
$$\Delta_{\ell_1,\ell_2,m}=\{n\in \mathbb{N}: \quad s(n)=\ell_1, \quad s(n^2)\leq \ell_2, \quad n<2^m, \quad n\text{ odd} \},$$
that we needed for our applications. 
The following result gives, starting from a given integer, the smallest integer with same weight larger than the given integer .

\begin{lem}\label{lemnext}
Let $n\geq 1$ be an integer. Write $(n)_2= \mathsf{x}\tO\tL^{b+1}\tO^{c}$ for some $b,c\geq 0$ and $\mathsf{x}$ a possibly empty binary word. Then the next integer by increasing order, denoted by $m$, with $s(m)=s(n)$ is $(m)_2=\mathsf{x}\tL\tO^{c+1}\tL^{b}$. 
\end{lem}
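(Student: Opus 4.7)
The plan is to verify three claims about the candidate $m$ whose binary expansion is asserted to be $x\tL\tO^{c+1}\tL^{b}$: that $s(m) = s(n)$, that $m > n$, and that no integer lies strictly between $n$ and $m$ with the same digit sum. The first two are immediate from the expansions, while the third is the heart of the lemma and is essentially Gosper's bit-twiddling identity for the next integer with a fixed popcount.

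First I would compute $s(m) = s(x) + 1 + b = s(x) + (b+1) = s(n)$ directly from the two expansions. Next I would observe that $n$ and $m$ agree on the $x$-prefix (positions from $c+b+2$ upward) and differ only on the bottom $c+b+2$ bits; among these, the most significant differing position is $c+b+1$, where $n$ has a $\tO$-bit and $m$ has a $\tL$-bit, so $m > n$.

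For the minimality I would set $X$ to be the integer with binary expansion $x$, and write $n = X \cdot 2^{c+b+2} + L_n$ and $m = X \cdot 2^{c+b+2} + L_m$ with $L_n = (2^{b+1}-1)\cdot 2^{c}$ and $L_m = 2^{c+b+1} + (2^{b}-1)$. Both $L_n$ and $L_m$ are strictly less than $2^{c+b+2}$, so any $n'$ with $n < n' < m$ must share the $x$-prefix, hence $n' = X \cdot 2^{c+b+2} + L'$ with $L_n < L' < L_m$ and required weight $s(L') = b+1$. I would then split on the bit of $L'$ at position $c+b+1$. If it is $\tO$, then $L' < 2^{c+b+1}$, but the largest integer of weight $b+1$ strictly below $2^{c+b+1}$ is obtained by placing all $b+1$ of its ones in the top available positions, which gives exactly $L_n$, contradicting $L' > L_n$. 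If it is $\tL$, then writing $L' = 2^{c+b+1} + L''$ one needs $L'' < 2^{b} - 1$ and $s(L'') = b$, but the unique integer of weight $b$ below $2^{b}$ is $2^{b}-1$ itself, again a contradiction.

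The argument is elementary and the only real obstacle I foresee is bookkeeping: avoiding off-by-one errors with bit indices, and gracefully handling the edge cases when $x$ is empty (so $X = 0$), $c = 0$, or $b = 0$. As a sanity check I would verify $n = 7 = \tO\tL\tL\tL$ (empty $x$, $b = 2$, $c = 0$), for which the formula predicts $m = \tL\tO\tL\tL = 11$; this is indeed the next integer after $7$ with weight $3$.
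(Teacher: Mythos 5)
Your proof is correct and follows essentially the same route as the paper's: both arguments reduce to the observation that $\tL^{b+1}\tO^{c}$ is the largest weight-$(b+1)$ integer expressible in $c+b+1$ bits, forcing any intermediate integer of the same weight to set the bit at position $c+b+1$ and then coincide with $m$. Your write-up is in fact more careful than the paper's rather terse version, since you explicitly justify that the prefix $x$ is preserved and handle the two cases of the bit at position $c+b+1$ separately.
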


\begin{proof}
It is clear that $s(m)=s(n)$ and suppose there exist an integer $p$ such that $s(p)=s(m)$ and $n<p\leq m$. Since $p>n$ and $s(p)=s(n)$, a bit of index $\geq c+b+1$ of $p$ is $\tL$ because $\tL^{b+1}\tO^c$ is the expansion of the largest integer of length $c+b+1$ with weight $b+1$. Since $p\leq m$, this index is exactly $c+b+1$. By $p\leq m$, the binary expansion of $p$ begins with a $\tL$-block of length $b$. This implies $p=m$. 
\end{proof}

The algorithm \texttt{next} is a translation of Lemma~\ref{lemnext}. Given an integer $n$, the algorithm constructs the next integer by increasing order with same weight. 

\bigskip

\begin{minipage}{0.9\linewidth}
\begin{algorithm}[H]
	{\textbf{Procedure} next($n$):}
	{ \\
	$c=$ \textit{index of the least significant set bit $n$} \; 
	$n=n/2^c$ \; 
	$n=n+1$ \;
	$b=$ \textit{index of the least significant set bit $n$} -1 \;
	$n=2^c \cdot n$ \;
	$n=n | 2^b$ \Comment*[r]{| is the OR operator}
	$n=n-1$ \;
	}
\caption{\texttt{next}}
\end{algorithm}
\end{minipage}

\bigskip

Now, having constructed the set of integers with fixed weight, the second step is to determine the weight of their squares. The program uses the fact that for an integer $n$ of the form $n=m+2^{L}p$ and $m<2^{L}$, we have $n^2=m^2+2^{L+1}mp+2^{2L}p^2$. Thus the  $L+1$ lowest significant digits of $n^2$ are determined by $m$, i.e. the lower part of $n$. In our study, we are interested in integers whose squares contain only a small number of $\tL$-bits. As a consequence, if the lower part of $n^2$ contains already too many $\tL$-bits, then we can already reject the integer as a solution, and it is not necessary to compute explicitly all the square $n^2$. This preliminary calculus reduces drastically the computation time. For efficiency and practical issues, we have implemented this algorithm with $L=64$.  

We have parallelized our program and distributed the calculation on multiple threads according to a suffix before making the \texttt{next} procedure. Indeed, for a fixed $a$ we can consider integers of the form $n=1+2^{a}+2^{a+1}m$ for odd $m$ and to find \texttt{next}$(n)$ it is sufficient to execute \texttt{next}$(m)$. This is   equivalent to fix the place of the second $\tL$-bit in $n$. The cutting is therefore done via
$$\Delta_{\ell_1,\ell_2,a,m}=\{n\in \mathbb{N}: \; s(n)=\ell_1, \; s(n^2)\leq \ell_2, \; \text{ the second bit of $n$ is $a$ },\; n<2^m, \; n\text{ odd} \}$$
and
$$\Delta_{\ell_1,\ell_2,m}=\bigcup_{a=1}^{m-\ell_1+1} \Delta_{\ell_1,\ell_2,a,m}.$$ This cutting was necessary to conclude for the case $\boxed{k=11}$ of~\eqref{problem}, when $m=1$.  

\medskip
Another issue arises with this parallelization. The number of integers in $\Delta_{\ell_1,\ell_2,a,m}$ is not equivalent. The smaller the value if $a$, the larger is the cardinality of $\Delta_{\ell_1,\ell_2,a,m}$. We have supposed that the time of computation for each integer is similar (this is heuristically supported by the use of the same binomial coefficients). With a preliminary calculation, we designed specific implementations for each thread. By doing so, we could again reduce the global computation time. 

\subsection{Algorithm \texttt{max-integer}}

We describe the algorithm for the equation $s(n^2)=4$, which can be written as $s(y+2^{\ell-1}y^2)=3$ for a fixed weight $s(y)=k$ (see Section~\ref{subn24}). The other cases are similar and only need some minor changes in the implementation.

\medskip
Denote by $\lambda$ the unique integer such that $2^{\lambda-1}\leq y <2^{\lambda}$ and consider the following scheme for $y+2^{\ell-1}y^2$:

\bigskip

\begin{center}
\begin{tabular}{c c c | c  c  c c c c c c l}
   &&& $\tL$ & $\cdots$ &$\varepsilon_{\ell}$ &$\cdots$ & $\varepsilon_1$ &  $\tL$&&&$=\;y$\\
 $\tL$ & $\cdots$&$\eta_{\lambda-1}$ & $\cdots$ & $\cdots$ &  $\tL$ &&&&&& =\; $2^{\ell-1}y^2$ \\
  \hline
   & $y_1$&&&&$y_2$ && 
\end{tabular}
\end{center}

\bigskip

We cut the sum into two binary blocks, $y_1$ and $y_2$. We have to allocate in total three $\tL$-bits for $y_2$ and $y_1$. We know that $s(y_1) \geq 1$ since the most significant digit of $y+2^{\ell-1}y^2$ lies in the $y_1$-part. Let us focus on the case where $\ell=1$, the other cases are similar.

\bigskip

As explained before, we tackle this problem step by step by adding bits in the binary decomposition of $y$. In this algorithm, we consider the binary blocks such as $\tO\tL\tL$ and $\tL\tL$ to be different since we have more knowledge for the first block. In fact, in this context, it is more useful to see them as words rather than integers.

\medskip
We say that a binary word $\omega$ is a \textit{candidate} if the right part  of the sum of $\omega+\omega^2$ (by a slight abuse of the notation) has at most two $\tL$-bits for a certain length of the block. If $\omega$ is a candidate then we can extend $\omega$ to $\tO \omega$ and $\tL \omega$ to the left and check if these two new words are again candidates. If a word $\omega$ is not a candidate, then it is not possible to extend it to a candidate word since the lower bits contains already too many $\tL$-bits and these bits are not influenced by adding new bits to $\omega$ since carry propagation is directed towards the higher significant digits. The algorithm starts with the word $\omega=\tL$, constructs candidates, translates them into integers and checks whether they satisfy $s(n^2)=4$. The algorithm stops when candidates cannot be extended. 

\medskip
For the algorithm to stop, we have two conditions. The first condition is at the core of the algorithm: a word that is already of weight $k$ cannot be extended anymore with additional $\tL$-bits, so candidates have $\leq k$ $\tL$-bits. The second condition is on the length of the possible leading block of $\tO$-bits of a candidate of the form $\tO\cdots\tO \omega$. We have the following result.


\begin{lem}
Let $\omega$ be a candidate of length $\lambda$. Then the word $\tL\tO^{\lambda}\omega$ is not a candidate. 
\end{lem}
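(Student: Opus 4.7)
My plan is to pass to the integer picture. Write $w$ for the integer value of $\omega$, so that $w$ is odd with $1 \leq w \leq 2^\lambda - 1$; then $\omega' = \tL\tO^\lambda\omega$ is the binary expansion of $w' = 2^{2\lambda} + w$. The goal is to read off the binary expansion of $S' := w' + 2^{\ell-1}(w')^2$ and exhibit at least three $\tL$-bits strictly below its most significant bit, which will force $\omega'$ to fail the candidate condition.

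I shall treat $\ell = 1$ in detail and reduce the general case to it at the end: multiplying the square term by $2^{\ell - 1}$ shifts every "square-originating" bit position uniformly upwards by $\ell - 1$, without disturbing any of the disjointness arguments below. For $\ell = 1$,
\begin{equation*}
S' \;=\; 2^{4\lambda} \;+\; 2^{2\lambda+1}\,w \;+\; 2^{2\lambda} \;+\; (w + w^2).
\end{equation*}
The elementary bound $w + w^2 = w(w+1) \leq 2^\lambda(2^\lambda - 1) < 2^{2\lambda}$ (which follows from $w \leq 2^\lambda - 1$) then forces these four summands to occupy pairwise disjoint bit-position ranges: $[0, 2\lambda - 1]$ for $w + w^2$, the singleton $\{2\lambda\}$ for $2^{2\lambda}$, $[2\lambda + 1, 3\lambda]$ for $2^{2\lambda + 1}w$, and $\{4\lambda\}$ for $2^{4\lambda}$. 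Since no two ranges overlap, no carries occur between the blocks.

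Consequently $s(S') = s(w + w^2) + 1 + s(w) + 1$ and the MSB of $S'$ sits isolated at position $4\lambda$. The right part of $S'$ (everything below this MSB) therefore contains exactly $s(w + w^2) + 1 + s(w)$ $\tL$-bits; as $w \geq 1$ forces $s(w) \geq 1$ and $w + w^2 \geq 2$ forces $s(w + w^2) \geq 1$, this count is at least $3$, exceeding the candidate threshold of $2$. Hence $\omega'$ is not a candidate. I expect the only subtle step to be the verification of the non-interference of the four summands via the bound $w + w^2 < 2^{2\lambda}$; once this is established the $\tL$-bit count is immediate, and the shift by $\ell - 1$ in the general case changes nothing.
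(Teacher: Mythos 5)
Your argument is essentially the paper's own proof: the paper writes the same addition $m+m^2$ with $m=2^{2\lambda}+w$ schematically, observes that the four contributions $2^{4\lambda}$, $2^{2\lambda+1}w$, $2^{2\lambda}$ and $w+w^2$ occupy disjoint bit ranges (your bound $w(w+1)<2^{2\lambda}$ is exactly the implicit justification), and concludes that too many $\tL$-bits survive. The only caveat is your closing remark about general $\ell$: shifting the square term by $2^{\ell-1}$ \emph{does} disturb the disjointness (e.g.\ $2^{\ell-1}w^2$ can reach position $2\lambda$ for $\ell\geq 2$, and $w$ and $2^{\ell-1}w^2$ must then be regrouped before counting), though the conclusion still holds and the paper itself only presents the case $\ell=1$ here.
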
  

\begin{proof}
In this case we have the following sum \begin{center}
\begin{tabular}{l l l l l l l l l l l l l l l l l l l l l l l l l }
   &&&&&&$\tL$ & $\tO\cdots \tO$ $\omega$ &&&$=\;m$\\
+&&$\tL$&$\tO\cdots \tO$&$\omega$    & $\tO\cdots \tO$ &$\tO$ & $\omega^2$   &&&$=\;m^2$ \\ \hline
 &&$\tL$ &&$\omega$ & & $\tL$ &  $\omega^2+\omega$
\end{tabular}
\end{center}
The sum contains always more than three $\tL$-bits.
\end{proof}

Thus the algorithm is the following.

\bigskip

\begin{minipage}{0.9\linewidth}
\begin{algorithm}[H]
	{\textbf{Procedure} max-integer($k$):}
	{ \\
	S=[1] \Comment*[r]{Stack of all candidates} 
    \While{S is not empty}{
    	$\omega$=S.top() \Comment*[r]{Top element of the stack} 
    	$n=1+2\omega$ \;
    	\If {$s(n^2)=4$}{
    	print(n)}{}
    	S.pop() \Comment*[r]{remove $\omega$ from the stack}
    	\If{ $\tL\omega$ is a candidate and $s(\omega)<k$}{
    	S.push($\tL\omega)$ \Comment*[r]{add $\tL\omega$ to the top of the stack} 
    	}{}
    	\If{ $\tO\omega$ is a candidate and 2*(length of leading zeros of $\tO\omega$) $<$ length of $\omega$}{
    	S.push($\tO\omega$) \Comment*[r]{add $\tO\omega$ to the top of the stack} 
    	}{}
	}{
	}
	}
\caption{\texttt{max-integer}}
\end{algorithm}
\end{minipage}

\bigskip

With respect to Theorem~\ref{thm_s(n2)=4 and 5},~(\ref{union4lambda}), our implementation of the algorithm takes 102 sec to end for $s(n)=16$ and 2h 50min for $s(n)=17$ with a desk machine Intel(R) Core(TM) i9-9980HK CPU @ 2.40GHz. The code program is available here: \newline \url{https://gitlab.inria.fr/jamet/on-the-binary-digits-of-n-and-n2}. 

\section{The remaining cases $k=14,15$}\label{sec-rem1415}
We here consider the problem of determining the solutions of
$$s(n)=s(n^2)\in\{14, 15\},$$
which are the last two remaining cases in the original problem. These cases are much more difficult than the previous ones since we cannot rely on the former cases to resolve the problem. As already mentioned in Section~\ref{secintro},~(\ref{s(n)=s(n2)=12})--(\ref{s(n)=s(n2)=16}), there are infinitely many solutions for $s(n)=s(n^2)\in\{12,13\}$.   

\bigskip
To tackle these remaining cases, we improved our programs that determine the sets $\Delta_{\ell_1,\ell_2,m}$. For the case $k=14$, there are many more subcases than before, and the investigation gets extremely cumbersome. We still can rely on \Cref{factorization} which gives decompositions into $m$ blocks with $1\leq m \leq 6$ for sufficiently large solutions.

For the case $m=1$, we give in the sequel the constants implied by \Cref{s(ab)=2} and \Cref{s(ab)=3}. This demonstrates the difficulty of the computation. Such as before,  write $(n)_2=\mathsf{x}_1 \tO\ldots\tO \mathsf{x}_0$. By symmetry we can suppose that $s(x_1)\geq s(x_0)$. In this case we have $s(x_1)^2,s(x_0^2)\leq 10$ and the following table (see also Section~\ref{subsecm1}):

\bigskip

\begin{center}
\begin{tabular}{|c|c|c|c|c|}
\hline $s(x_1)$ & $s(x_0)$ & Sets $\Delta$ for $ s(x_1x_0)=2$ & Sets $\Delta$ for $s(x_1x_0)=3$   \\ \hline\hline
7 & 7  & $\Delta_{7,10,93}$ $\times$ $\Delta_{7,10,93}$ & $\Delta_{7,9,182}$ $\times$ $\Delta_{7,9,182}$ \\ \hline 
8 & 6 & $\Delta_{8,10,91}$ $\times$ $\Delta_{6,10,91}$ & $\Delta_{8,9,178}$ $\times$ $\Delta_{6,9,178}$\\ \hline 
9 & 5  & $\Delta_{9,10,85}$ $\times$ $\Delta_{5,10,85}$ & $\Delta_{9,9,166}$ $\times$ $\Delta_{5,9,166}$\\ \hline 
10 & 4 &  $\Delta_{10,10,75}$ $\times$ $\Delta_{4,10,75}$ & $\Delta_{10,9,146}$ $\times$ $\Delta_{4,9,146}$ \\ \hline 
11 & 3 & $\Delta_{11,10,61}$ $\times$ $\Delta_{3,10,61}$ & $\Delta_{11,9,118}$ $\times$ $\Delta_{3,9,118}$ \\ \hline 
12 & 2 & $\Delta_{12,10,43}$ $\times$ $\Delta_{2,10,43}$ & $\Delta_{12,9,82}$ $\times$ $\Delta_{2,9,82}$\\ \hline 
\end{tabular}
\end{center}

\bigskip

The algorithm \texttt{next} gives us no solution for $s(x_1\cdot x_0)=2$ and only the couple $(3695,143)$ for $s(x_1\cdot x_0)=3$. But we have $s(3695^2)+s(143^2)=17>11$ and then this couple is not a solution. Therefore there is no infinite family such that $m=1$ such as in  
(\ref{s(n)=s(n2)=12})--(\ref{s(n)=s(n2)=16}).
\bigskip

For $k=15$, we have the following sets to determine: 

\bigskip

\begin{center}
\begin{tabular}{|c|c|c|c|c|}
\hline $s(x_1)$ & $s(x_0)$ & Sets $\Delta$ for $ s(x_1x_0)=2$ & Sets $\Delta$ for $s(x_1x_0)=3$   \\ \hline\hline
8 & 7  & $\Delta_{8,11,107}$ $\times$ $\Delta_{7,11,107}$ & $\Delta_{8,10,210}$ $\times$ $\Delta_{7,10,210}$ \\ \hline 
9 & 6 & $\Delta_{9,11,103}$ $\times$ $\Delta_{6,11,103}$ & $\Delta_{9,10,202}$ $\times$ $\Delta_{6,10,202}$ \\ \hline 
10 & 5  & $\Delta_{10,11,95}$ $\times$ $\Delta_{5,11,95}$ & $\Delta_{10,10,186}$ $\times$ $\Delta_{5,10,186}$\\ \hline 
11 & 4 &  $\Delta_{11,11,83}$ $\times$ $\Delta_{4,11,83}$ & $\Delta_{11,10,163}$ $\times$ $\Delta_{4,10,163}$ \\ \hline 
12 & 3 & $\Delta_{12,11,67}$ $\times$ $\Delta_{3,11,67}$ & $\Delta_{12,10,130}$ $\times$ $\Delta_{3,10,130}$\\ \hline 
13 & 2 & $\Delta_{13,11,47}$ $\times$ $\Delta_{2,11,47}$ & $\Delta_{13,10,90}$ $\times$ $\Delta_{2,10,90}$\\ \hline 
\end{tabular}
\end{center}

\bigskip

Finally, we have made a global search for $s(n^2)=s(n)=11,12,13,14,15$ for $n<2^{80}$ with a triple cutting. We found the following proportions:

\bigskip

$$\begin{tabular}{l|p{6cm}|l|c} \
     $k$& Proportion of odd integers such that $s(n)=s(n^2)=k$ for $n<2^{80}$  & Running time & Number of cores used\\ \hline \hline
     $11$& $4\cdot 10^{-10}$ & 2min 19sec & 659 \\
     $12$ & $1.5\cdot 10^{-10}$ & 4min 58sec & 480 \\
     $13$ & $7.2\cdot 10^{-11}$ & 32min 25sec & 360 \\
     $14$ & $2.6\cdot 10^{-11}$  & 3h 34min 43sec & 277 \\
     $15$ & $1.2 \cdot 10^{-11}$ & 23h 24min 47sec & 218
\end{tabular}$$

\bigskip

These five proportions are similar but there is a clear difference between the cases $k=11$ and $k=12,13$. For $k=11$ our algorithm finds that the largest solution is $n=35463511416833$ of binary length $46$. The structure in the solutions for $k=12$ and $k=13$ is clearly different since we can see a threshold between sporadic solutions and infinite families that are composed by small blocks. These infinite families already appear before this threshold. For example, for $k=12$, we have that any solution of binary length larger than $55$ is of the form $111\cdot 2^t+111$, but this form is already valid and appears for $t\geq 15$. 

\medskip

For $k=14$, we still find solutions of binary length $80$, such as  $n=605643510452789079965697$ for example. Nevertheless, no infinite family occurs clearly. For $k=15$, the situation is similar: we have a solution of length $80$, for example, $n=605642350760526229274625$, again there is no obvious infinite family and the $\tL$-bits in the solutions do not follow an apparent rule. We believe that if an infinite family exists for $k=14$ or $k=15$, it should appear clearly for $n<2^{80}$ already, as it is the case for $k=12,13$ and $16$.
It is therefore likely that there is only a finite number of solution. We formulated this in \Cref{conj 14-15}. 

\medskip

This dichotomy between finite and infinite number of solutions for the problem~(\ref{problem}) is rather surprising but seems at the same time to occur frequently in this context, for example, such as between $E_3$ (see~(\ref{s(n2)=3})) and the conjectured set $E_4$ (see Conjecture~\ref{conjE4}). Interestingly enough, there exist again infinite (independent) families for the twisted system $$s(n)=14,\quad s(n^2)=15,$$
namely
$$n=23\cdot 2^{t}+2943, \mbox{ with }t\geq 13,$$
and 
$$n=727\cdot 2^{t}+727 \mbox{ with }t\geq 21.$$
    
\bigskip

To perform all calculations in our article, we used the cluster gros that consists of 123 nodes, Intel Xeon Gold 5220 and 18 cores / CPU, with 96 GiB of memory, see \newline  \url{https://www.grid5000.fr/w/Nancy:Hardware#gros} and the code program is available \newline \url{https://gitlab.inria.fr/jamet/on-the-binary-digits-of-n-and-n2}.

\section*{Acknowledgements} 

The authors would like to thank Lukas Spiegelhofer for discussions and a very useful \texttt{C}-program. This work was supported partly by the French PIA project “Lorraine Université d’Excellence”, reference ANR-15-IDEX-04-LUE, and by the projects ANR-18-CE40-0018 (EST) and ANR-20-CE91-0006 (ArithRand). The third author was supported by JSPS KAKENHI Grant Number 19K03439. 

\bibliographystyle{amsplain}
\bibliography{biblio}

\end{document}